\newcommand{\vx}{\mathbf{x}}
\newcommand{\vv}{\mathbf{v}}
\newcommand{\vu}{\mathbf{u}}
\newcommand{\vw}{\mathbf{w}}
\newcommand{\vy}{\mathbf{y}}
\newcommand{\veta}{\boldsymbol{\upeta}}
\newcommand{\ii}{\mathbf{i}}
	\newcommand{\jj}{\mathbf{j}}
	\newcommand{\kk}{\mathbf{k}}
\newcommand{\diag}{\textup{diag}}
\newtheorem{theorem}{Theorem}[section]
\newtheorem{lemma}[theorem]{Lemma}
\newtheorem{corollary}[theorem]{Corollary}
\newtheorem{definition}{Definition}[section]
\newtheorem{example}{Example}[section]
\newtheorem{remark}{Remark}[section]
\newtheorem{assumption}{Assumption}
\title{The Power Method for   Non-Hermitian Dual Quaternion  Matrices}
\author{
Hao Yang\footnote{School of Mathematical Sciences, Beihang University, Beijing  100191, China.
			({\tt uyh@buaa.edu.cn})}
            \and
Liqun Qi\footnote{Department of Applied Mathematics, The Hong Kong Polytechnic University, Hung Hom, Kowloon, Hong Kong.
			({\tt maqilq@polyu.edu.hk})}
            \and 
Chunfeng Cui\footnote{School of Mathematical Sciences, Beihang University, Beijing  100191, China.
			({\tt chunfengcui@buaa.edu.cn})}
}
\date{\today}
\begin{document}
	\maketitle
	\begin{abstract}
        This paper proposes a power method for computing the dominant eigenvalues of a non-Hermitian dual quaternion matrix (DQM). Although the algorithmic framework parallels the Hermitian case, the theoretical analysis is substantially more complex since a non-Hermitian dual matrix may possess no eigenvalues or infinitely many eigenvalues. Besides, its eigenvalues are not necessarily dual numbers, leading to non-commutative behavior that further complicates the analysis.  
        We first present a sufficient condition that ensures the existence of an eigenvalue whose standard part corresponds to the largest magnitude eigenvalue of the standard part matrix.
        Under a stronger condition, we then establish that the sequence generated by the power method converges linearly to the strict dominant eigenvalue and its associated eigenvectors. We also verify that this condition is necessary. 
        The key to our analysis is a new Jordan-like decomposition, which addresses a gap arising from the lack of a conventional Jordan decomposition for non-Hermitian dual matrices. 
        Our framework readily extends to non-Hermitian dual complex and dual number matrices. We also develop an adjoint method that reformulates the eigenvalue problem into an equivalent form for dual complex matrices.
        Numerical experiments on non-Hermitian DQMs are presented to demonstrate the efficiency of the power method. \par
		
        \noindent\textbf{Keywords}: Power method, dominant eigenvalue, non-Hermitian DQM

          \noindent\textbf{MSC code}: 15A18, 15A66, 65F15
	\end{abstract}

\section{Introduction}
Dual quaternion numbers and   matrices have proven to be valuable in robotics,  
particularly in solving key problems such as  hand-eye calibration \cite{daniilidis1999hand,xie2025generalized,chen2025dual},  simultaneous localization and mapping \cite{bultmann2019stereo,bryson2007building,leonard2016past,carlone2015initialization,wei2013autonomous,williams2000autonomous}, and multi-agent formation control \cite{qi2025unit,qi2022dual,chen2025dual,Liu2025distributed}. In multi-agent formation control, the eigenvalues of DQMs play a crucial role in verifying the reasonableness of configurations \cite{qi2025unit,chen2025dual,qi2024eigenvalues} and establishing the convergence of control law \cite{cui2025DQMControl}.\par 

In recent years, the theoretical foundations and algorithmic design for the dual quaternion  eigenvalue problem have attracted significant research attentions. In \cite{qi2021eigenvalues}, Qi and Luo investigated the spectral theory, revealing that if a right eigenvalue of a DQM is a dual number, it is also a left eigenvalue. They showed that the right eigenvalues of a dual quaternion Hermitian matrix are dual numbers. An $n$-by-$n$  dual quaternion Hermitian matrix was shown to have $n$ eigenvalues. Thus, a unitary decomposition for a dual Hermitian matrix was proposed. In 2024, Cui and Qi \cite{cui2024power} introduced a power method for computing the strict dominant eigenvalues of dual quaternion Hermitian matrices. It proved that the convergence rate of the power method to the strict dominant eigenpairs of dual quaternion Hermitian matrices is linear. Subsequently,  the Rayleigh quotient iteration \cite{Duan2024} that achieves faster convergence rates was proposed.  
However, when the dual quaternion Hermitian matrix has two eigenvalues
with identical standard parts but different dual parts, the power method and the Rayleigh quotient may be invalid to calculate these eigenvalues.  
Several methods have proposed to address the above issue, such as the supplement matrix method \cite{qi2024eigenvalues}, the Jacobi method \cite{ding2024jacobi, chen2025three}, and EDDCAM-EA \cite{chen2025dual}.  


However, while significant advances have been made for Hermitian DQMs, to the best of our knowledge, computing  eigenvalues of non-Hermitian DQMs remains underdeveloped. Non-Hermitian DQMs naturally arise in applications involving weighted directed graphs \cite{qi2025unit} and formation control systems. In formation control problems, the underlying interaction graphs are typically digraphs to model asymmetric relationships between agents. This makes the computation of eigenvalues for non-Hermitian DQMs particularly important. However, as demonstrated in \cite{qi2023eigenvalues},  non-Hermitian dual complex matrices (DCMs) exhibit special spectral properties: they may possess no eigenvalues or infinitely many eigenvalues. This naturally extends to the dual quaternion case. These unique characteristics make non-Hermitian DQMs both theoretically significant and computationally challenging. 
   
\par 

In this paper, we propose computing the strict dominant eigenvalue of a non-Hermitian DQM by the power method. Although the algorithmic framework parallels that for the Hermitian case, the theoretical analysis is substantially more complex. A primary challenge arises from the uncertain existence of eigenvalues for non-Hermitian DQMs owing to their distinctive spectral characteristics. To address this, we establish a sufficient condition that guarantees the existence of an eigenvalue whose standard part corresponds to the largest magnitude eigenvalue of the standard part matrix.
Another key challenge in this task lies in computing the matrix power of non-Hermitian dual quaternions, which stems from their intrinsic non-commutativity and structural complexity. To tackle this challenge, we reduce a non-Hermitian DQM to a block diagonal matrix via similarity transformation, which is termed the Jordan-like decomposition. This enables us to analyze the properties of the power of a non-Hermitian DQM, which in turn provides a rigorous basis for analyzing the convergence behavior of the power method. Furthermore, to ensure the convergence of the power method, we introduce an assumption regarding the spectral properties of the matrix. We also demonstrate that the power method may fail to converge if this assumption is not satisfied.

The structure of this paper is organized as follows. Section 2 introduces some fundamental knowledge about quaternions and dual quaternions. Section 3 shows a sufficient condition for the existence of the dominant eigenvalue and presents the Jordan-like decomposition of a DQM. On this basis, we establish the convergence, convergence rate, and necessary and sufficient conditions for the convergence of the power method applied to non-Hermitian DQMs. Furthermore, we extend the conclusions to the power method for non-Hermitian  DCMs and develop the dual complex adjoint matrix (DCAM) power method for computing the eigenvalues of non-Hermitian DQMs. 
Section 4 presents numerical experiment results to demonstrate the efficiency and properties of the power method and the  DCAM  based power method.

\section{Preliminaries}
In this section, we introduce some basic knowledge about the quaternion, quaternion vector, quaternion matrix, dual quaternion, dual quaternion vector, and dual quaternion matrix (DQM). 
The set of real numbers, dual numbers, complex numbers, dual complex numbers, quaternions, dual quaternions are denoted as $\mathbb{R},\mathbb{DR},\mathbb{C},\mathbb{DC},\mathbb{Q},\mathbb{DQ}$, respectively.

\subsection{Quaternions}

A quaternion $q\in\mathbb{Q}$ is expressed as $q=q_0+q_1\ii+q_2\jj+q_3\kk$, where $q_0,q_1,q_2,q_3\in\mathbb{R}$, and three imaginary units $\ii,\jj,\kk$ satisfy
\begin{equation*}
	\ii^2=\jj^2=\kk^2=\ii\jj\kk=-1.
\end{equation*}
The quaternion multiplication is associative but non-commutative. The conjugate and magnitude of a quaternion $q$ is defined as $q^*=q_0-q_1\ii-q_2\jj-q_3\kk$ and  $|q|=q^*q=\sqrt{q_0^2+q_1^2+q_2^2+q_3^2}$,  
respectively. 
 Every non-zero quaternion $q$ is invertible, and its inverse is $q^{-1}=\frac{q^*}{|q|^2}$. If $|q|=1$, then it is called a unit quaternion.

Two quaternions $q$ and $p$ are said to be similar and are denoted as $p\sim q$ if there exists an invertible quaternion $u$ such that $u^{-1}qu=p$. The similarity relationship of quaternions is an equivalent relationship. We denote by $[q]$ the equivalent class containing $q$.
\begin{lemma}
	[\cite{zhang1997quaternions}]
	If $q=q_0+q_1\ii+q_2\jj+q_3\kk\in\mathbb{Q}$, then $q_0+\sqrt{q_1^2+q_2^2+q_3^2}\ii\in \mathbb C$ and $q$ are similar. Namely, $q\in[q_0+\sqrt{q_1^2+q_2^2+q_3^2}\ii]$.
\end{lemma}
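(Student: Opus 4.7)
The plan is to construct an explicit invertible quaternion $u$ satisfying $u^{-1}qu = q_0 + r\ii$ with $r := \sqrt{q_1^2+q_2^2+q_3^2}$. First I would decompose $q = q_0 + \vec v$ where $\vec v := q_1\ii + q_2\jj + q_3\kk$, and use that the real scalar $q_0$ commutes with every quaternion to conclude $u^{-1}qu = q_0 + u^{-1}\vec v \,u$. This reduces the task to conjugating the ``pure part'' $\vec v$ onto the $\ii$-axis.

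Next I would dispose of the degenerate case $r = 0$: then $q = q_0 \in \mathbb{R} \subseteq \mathbb{C}$ and $u = 1$ works. For $r > 0$, set $s := \vec v / r$. Expanding $s^2$ using $\ii^2 = \jj^2 = \kk^2 = -1$ and the pairwise anticommutativity of $\ii,\jj,\kk$, the cross terms cancel and one obtains $s^2 = -(q_1^2+q_2^2+q_3^2)/r^2 = -1$. Hence $s$ is a pure unit quaternion algebraically indistinguishable from $\ii$, and the problem reduces to finding $u$ with $u^{-1} s u = \ii$.

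The key step is the ansatz $u := s + \ii$. A one-line identity gives
\[
s\,u \;=\; s^2 + s\ii \;=\; -1 + s\ii \;=\; s\ii + \ii^2 \;=\; u\,\ii,
\]
so $u^{-1} s u = \ii$ (and multiplying by $r$ gives $u^{-1} \vec v\, u = r\ii$), provided $u \neq 0$. The only way $u = 0$ is $s = -\ii$, i.e., $\vec v = -r\ii$; in that exceptional case I would pick $u = \jj$ instead and verify $\jj^{-1}(-r\ii)\jj = r\ii$ using $\jj\ii = -\kk$ and $\kk\jj = -\ii$.

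The main obstacle, such as it is, lies only in writing down the right candidate $u$; once the ansatz $u = s + \ii$ is in hand, the verification is the two-step identity above. No further technicalities are needed, since the definition of similarity asks only for invertibility of $u$, not unitarity, and invertibility in $\mathbb{Q}$ is equivalent to nonvanishing.
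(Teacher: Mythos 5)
Your proof is correct. Note that the paper itself gives no argument for this lemma: it is imported verbatim from Zhang's survey on quaternion matrices, so there is no in-paper proof to compare against. Your construction is the standard elementary one and is complete: splitting off the real part $q_0$ (which commutes with everything), normalizing the pure part to $s$ with $s^2=-1$, and conjugating by $u=s+\ii$ via the identity $su = -1 + s\ii = u\ii$ handles the generic case, and you correctly treat both degenerate situations — $r=0$ (take $u=1$) and $s=-\ii$, where $u=s+\ii$ vanishes and the separate choice $u=\jj$ indeed gives $\jj^{-1}(-r\ii)\jj = r\ii$. You also rightly observe that the paper's definition of similarity only requires an invertible (not unit) quaternion, and that invertibility in $\mathbb{Q}$ is just nonvanishing, so no further normalization of $u$ is needed. (If one did want a unit conjugator, dividing $u$ by $|u|$ changes nothing, since real scalars commute.) In short: a correct, self-contained proof of a result the paper only cites.
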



The set of $n$-dimensional quaternion vectors is denoted as $\mathbb{Q}^n$. 
The 2-norm of a quaternion vector $\vx=(x_i)\in\mathbb{Q}^{n}$ is defined as $\Vert \vx\Vert_2=\sqrt{\sum_{i=1}^n|x_i|^2}$. For $\vx,\vy\in\mathbb{Q}^n$, 
\begin{equation}\label{eq:dual}
    \vx^*\vy+\vy^*\vx\leq 2\Vert\vx\Vert_2\Vert\vy\Vert_2.
\end{equation}
	We say that $\vx_1,\vx_2,\dots,\vx_r\in\mathbb{Q}^n$ are right linearly independent, if for any quaternion numbers $k_1,k_2,\dots,k_r\in\mathbb{Q}$, it holds that 
	\begin{equation*}
		\vx_1k_1+\vx_2k_2+\cdots+\vx_rk_r=0 \text{ implies }k_1=k_2=\cdots=k_r=0. 
	\end{equation*}
	Otherwise, we say that $\vx_1,\vx_2,\dots,\vx_r$ are right linearly dependent. 

The set of $m$-by-$n$ quaternion matrices is denoted as $\mathbb{Q}^{m\times n}$. Suppose that $A\in\mathbb{Q}^{n\times n}$ and $B\in\mathbb{Q}^{n\times n}$ are two quaternion matrices. If $AB=BA=I$, where $I$ is the $n\times n$ identity matrix, then we say $A$ is invertible, $B$ is the inverse of $A$, and denote $B=A^{-1}$. The conjugate of a quaternion matrix $A=(a_{ij})\in\mathbb{Q}^{m\times n}$ is denoted as $\overline{A}=(a_{ij}^*)\in\mathbb{Q}^{m\times n}$.
The conjugate transpose of $A$ is defined as $A^*=(a_{ji}^*)\in\mathbb{Q}^{n\times m}$. 
A quaternion $\lambda$ is said to be a right eigenvalue of $A\in\mathbb{Q}^{n\times n}$ if $A\vx=\vx\lambda$ for a non-zero quaternion vector $\vx\in\mathbb{Q}^n$, and $\vx$ is the corresponding eigenvector. For convenience, we denote right eigenvalues as eigenvalues. If $\lambda\in\mathbb{Q}$ is an eigenvalue of $A\in\mathbb{Q}^{n\times n}$ with the corresponding eigenvector $\vx$, then $\alpha^*\lambda \alpha\in[\lambda]$ is also an eigenvalue of $A$ with the corresponding eigenvector $\vx\alpha$, for any unit quaternion $\alpha$. We call $\lambda$ a standard eigenvalue of $A$ if $\lambda$ is a complex number with nonnegative imaginary part. Any $n$-by-$n$ quaternion matrix has at least one standard eigenvalue~\cite{brenner1951matrices}. Furthermore, any $n$-by-$n$ quaternion matrix has the Jordan form as follows \cite{zhang1997quaternions}.

\begin{theorem}
	[\cite{zhang1997quaternions}] \label{Thm:Jordan_Q}
	Suppose that $\lambda_1,\dots,\lambda_m\in\mathbb C$ are distinct standard eigenvalues of $A\in\mathbb{Q}^{n\times n}$, in which the algebraic 
    multiplicities are $n_1,\dots,n_m$, respectively. Then there exist an invertible matrix $P\in\mathbb{Q}^{n\times n}$ such that 
	\begin{equation*}
		P^{-1}AP=J=\diag(J_1,\dots,J_m),
	\end{equation*}
	in which  $J_i=\diag(J_{n_{i1}}(\lambda_i),J_{n_{i2}}(\lambda_i),\dots,J_{n_{it_i}}(\lambda_i))\in\mathbb{Q}^{n_i\times n_i},$ 
	\begin{equation*}
		J_{n_{ik}}(\lambda_i)=
		\begin{pmatrix}
			\lambda_i&1&&\\
			&\ddots&\ddots&\\
			&&\ddots&1\\
			&&&\lambda_i
		\end{pmatrix}\in\mathbb{Q}^{n_{ik}\times n_{ik}}, 1\leq k\leq t_i, 
		\sum_{k=1}^{t_i}n_{ik}=n_i, 1\leq i\leq m,
	\end{equation*}
	and except the order of $J_{n_{ik}}(\lambda_i)$, $J$ is unique.  
\end{theorem}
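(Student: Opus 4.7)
The plan is to reduce the quaternion Jordan problem to the classical complex Jordan decomposition via the standard complex adjoint representation, and then lift the resulting Jordan chains back to the quaternion setting.

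First, I would introduce the complex adjoint map $\chi : \mathbb{Q}^{n\times n} \to \mathbb{C}^{2n\times 2n}$ obtained by writing each quaternion as $q = a + b\jj$ with $a,b \in \mathbb{C}$, so that for $A = A_1 + A_2 \jj$ with $A_1, A_2 \in \mathbb{C}^{n\times n}$,
\begin{equation*}
\chi(A) = \begin{pmatrix} A_1 & A_2 \\ -\overline{A_2} & \overline{A_1} \end{pmatrix}.
\end{equation*}
A direct calculation using $\jj z = \overline{z}\,\jj$ for $z \in \mathbb{C}$ shows that $\chi$ is an injective ring homomorphism, and conjugation by $\bigl(\begin{smallmatrix} 0 & I \\ -I & 0 \end{smallmatrix}\bigr)$ sends $\chi(A)$ to $\overline{\chi(A)}$, so the spectrum of $\chi(A)$ is symmetric about the real axis. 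The standard eigenvalues $\lambda_1,\dots,\lambda_m$ of $A$ are precisely the eigenvalues of $\chi(A)$ in the closed upper half-plane, and the quaternion algebraic multiplicity $n_i$ equals the $\chi(A)$-multiplicity for non-real $\lambda_i$, and half of it for real $\lambda_i$ (whose $\chi(A)$-multiplicity is forced to be even by the symmetry).

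I would then apply the classical complex Jordan decomposition to $\chi(A)$ and organize its Jordan chains using the conjugation symmetry. Each chain for a non-real $\lambda_i$ is paired with a chain of equal length for $\overline{\lambda_i}$, while the chains at a real eigenvalue come in matched pairs of equal block sizes produced by the symplectic-like structure of $\chi$. Reading off the complex Jordan structure in this way fixes both the block sizes $n_{ik}$ and the number of blocks $t_i$ attached to each standard eigenvalue.

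The main step, which I expect to be the chief obstacle, is lifting each pair of conjugate complex Jordan chains to a single quaternion Jordan chain in $\mathbb{Q}^n$. For $w = (u^{\top}, v^{\top})^{\top} \in \mathbb{C}^{2n}$ define $\phi(w) = u + \jj v \in \mathbb{Q}^n$; the identity $\chi(A) w = w\lambda$ does not directly imply $A \phi(w) = \phi(w) \lambda$ because right multiplication by a non-real $\lambda$ interacts non-trivially with $\jj$. Overcoming this requires choosing specific representatives within each paired conjugate chain and exploiting $\jj \lambda = \overline{\lambda}\,\jj$, so that right multiplication by $\lambda_i$ on the lifted vector reproduces the action of $A$ modulo the chain shift. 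Assembling all such lifted chains produces the invertible $P \in \mathbb{Q}^{n\times n}$ with $P^{-1} A P = J$ of the stated block-diagonal form, and uniqueness of the block sizes up to reordering is inherited from the uniqueness of the classical Jordan form of $\chi(A)$ via the bijection between quaternion Jordan structures indexed by standard eigenvalues and conjugation-symmetric complex Jordan structures on $\chi(A)$.
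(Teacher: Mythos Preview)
The paper does not supply a proof of this theorem; it is quoted as a preliminary result from Zhang~(1997) and used without argument, so there is no in-paper proof to compare against. Your outline via the complex adjoint $\chi$ is exactly the standard route taken in the cited reference, and the key points---spectrum symmetry of $\chi(A)$ under conjugation by $\bigl(\begin{smallmatrix}0&I\\-I&0\end{smallmatrix}\bigr)$, pairing of Jordan chains, and lifting paired chains to a single quaternion chain---are correctly identified.

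Two small points would need tightening in a full write-up. First, your lift $\phi(w)=u+\jj v$ is off by a conjugate and a sign relative to what makes the algebra close; with the paper's convention $\mathcal{F}^{-1}\bigl((u^{\top},v^{\top})^{\top}\bigr)=u-\overline{v}\,\jj$ the verification $A\,\mathcal{F}^{-1}(w)=\mathcal{F}^{-1}(w)\,\lambda$ from $\chi(A)w=\lambda w$ goes through by direct computation, and the same identity extends termwise to Jordan chains $\chi(A)w_k=\lambda w_k+w_{k-1}$. Second, the assertion that for a real eigenvalue the Jordan blocks of $\chi(A)$ occur in equal-size pairs is true but not automatic from the conjugation symmetry alone (which acts trivially on a real eigenvalue); it requires the symplectic structure $\chi(A)^{\top}S=S\,\chi(A)^{\top}$ with $S=\bigl(\begin{smallmatrix}0&I\\-I&0\end{smallmatrix}\bigr)$ and a short argument that a nilpotent commuting with a nondegenerate skew form has even partition. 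Once those are in place, invertibility of the assembled $P$ follows because $\chi(P)$ is, up to column permutation, the full Jordan basis of $\chi(A)$, and uniqueness of block sizes is inherited as you say.
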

The matrices $J_{n_{ik}}(\lambda_i)(1\leq k\leq t_i, 1\leq i\leq m)$ are called Jordan blocks, and the matrix $J$ is called the Jordan canonical form of $A$. 
The algebraic multiplicity of an eigenvalue $\lambda$, denoted by $\mathtt{m}_a(\lambda, A)$, is the sum of the dimensions of all Jordan blocks associated with $\lambda$. Its geometric multiplicity, denoted $\mathtt{m}_g(\lambda, A)$, is the number of such Jordan blocks. 
It is straightforward to observe that $$\mathtt{m}_a(\lambda, A) \ge \mathtt{m}_g(\lambda, A)$$ for any eigenvalue $\lambda$. Moreover, a matrix $A$ is diagonalizable if and only if $\mathtt{m}_a(\lambda, A) = \mathtt{m}_g(\lambda, A)$ holds for every eigenvalue $\lambda$.

\par
For $A=(a_{ij})\in\mathbb{Q}^{m\times n}$, the 1-norm, 2-norm, $\infty$-norm, and the F-norm are defined by 
\begin{equation*}
	\begin{aligned}
		\Vert A\Vert_1&=\max_{1\leq j\leq n}\sum_{i=1}^m|a_{ij}|,\ \ \Vert A\Vert_2=\sqrt{\lambda_{\max}(A^*A)},\\
		\Vert A\Vert_{\infty}&=\max_{1\leq i\leq m}\sum_{j=1}^n|a_{ij}|,\ \ \Vert A\Vert_F=\sqrt{\sum_{i,j}|a_{ij}|^2},
	\end{aligned}
\end{equation*}
where $\lambda_{\max}(A^*A)$ is the largest eigenvalue of $A^*A$. 
Suppose that $A\in\mathbb{Q}^{m\times n},B\in\mathbb{Q}^{n\times r}$, $\vx\in\mathbb{Q}^{n}$,  the following   norm inequalities hold \cite{wei2018quaternion}:
\begin{equation*}
	\Vert A\vx\Vert_2\leq \Vert A\Vert_2\Vert \vx\Vert_2,\ \ 
	\Vert AB\Vert_2\leq \Vert A\Vert_2\Vert B\Vert_2,\ \
	\text{and }\Vert A\Vert_2^2\leq\Vert A\Vert_1\Vert A\Vert_{\infty}.
\end{equation*}

\subsection{Dual quaternions}

A
dual quaternion $\hat{q}=q_s+q_d\epsilon\in\mathbb{DQ}$ has the standard part $q_s\in\mathbb Q$ and the dual part $q_d\in\mathbb Q$. The symbol $\epsilon$ is the infinitesimal unit, satisfying $\epsilon\neq 0$, $\epsilon^2=0$, and $\epsilon$ is commutative with quaternions. If $q_s\neq 0$, then we say that $q$ is appreciable. If $q_s$ and $q_d$ are real numbers, then $\hat{q}$ is called a dual number and $\hat{q}\in\mathbb{DR}$. Similarly, dual complex numbers can   be defined.
Throughout this paper, we  denote st($\hat{q}$)  and  du($\hat{q}$) as the standard and  dual parts of $\hat q$,  respectively. The conjugate of $\hat{q}=q_s+q_d\epsilon\in\mathbb{DQ}$ is $\hat{q}^*=q_s^*+q_d^*\epsilon$, where $q_s^*$ and $q_d^*$ are the conjugates of $q_s$ and $q_d$, respectively. 
Given two dual quaternions $\hat{p}=p_s+p_d\epsilon$ and $\hat{q}=q_s+q_d\epsilon$, their sum is $\hat{p}+\hat{q}=(p_s+q_s)+(p_d+q_d)\epsilon$, and their product is equal to $\hat{p}\hat{q}=p_sq_s+(p_sq_d+p_dq_s)\epsilon$.

For two dual numbers $\hat{a}=a_s+a_d\epsilon, \hat{b}=b_s+b_d\epsilon\in\mathbb{DR}$,
the division of $\hat{a}$ and $\hat{b}$, when $a_s\neq 0$, or $a_s=0$ and $b_s=0$ is defined by 
\begin{equation*}
	\frac{b_s+b_d\epsilon}{a_s+a_d\epsilon}=
	\begin{cases}
		\frac{b_s}{a_s}+\left(\frac{b_d}{a_s}-\frac{b_s}{a_s}\frac{a_d}{a_s}\right)\epsilon, &\text{if }a_s\neq 0,\\
		\frac{b_d}{a_d}+c\epsilon, &\text{if }a_s=0,b_s=0,
	\end{cases}
\end{equation*}
where $c\in\mathbb{R}$ is an arbitrary real number. 
If $a_s>b_s$ or $a_s=b_s$ and $a_d>b_d$, we say $\hat{a}>\hat{b}$. This defines positive and  nonnegative dual numbers \cite{qi2022dual}. 
\begin{definition}
	[\cite{cui2024power}]\label{def:O}
	Let $\{\hat{a}_k=a_{ks}+a_{kd}\epsilon:k=1,2,\dots\}$ be a dual number sequence. We say that the dual number sequence $\{\hat{a}_k\}$ converges to a dual number $\hat{a}=a_s+a_d\epsilon$, if  $\{a_{ks}\}$ converges to $a_s$ and  $\{a_{kd}\}$ converges to $a_d$, respectively.\par 
	Let $\{c_k:k=1,2,\dots\}$ be a real number sequence. We denote by $\hat{a}_k=O_D(c_k)$ if $a_{ks}=O(c_k)$ and $a_{kd}=O(c_k)$. Furthermore, if $c_k=O(c^kh(k))$ for a real number $0<c<1$ and a polynomial $h(k)$, then we denote $c_k=\tilde{O}(c^k)$. Similarly, $\hat{a}_k=\tilde{O}_D(c^k)$ if $a_{ks}=\tilde{O}(c^k)$ and $a_{kd}=\tilde{O}(c^k)$. 
\end{definition}

The magnitude of a dual quaternion $\hat{q}=q_s+q_d\epsilon$ is defined as a nonnegative dual number
\begin{equation*}
	|\hat{q}|:=
	\begin{cases}
		|q_s|+\frac{(q_s^*q_d+q_d^*q_s)}{2|q_s|}\epsilon,\ & \text{if $q_s\neq 0$},\\
		|q_d|\epsilon,&\text{otherwise}.
	\end{cases}
\end{equation*} 
Every non-zero appreciable dual quaternion $\hat{q}$ is invertible, and its inverse is $\hat{q}^{-1}=\frac{\hat{q}^*}{|\hat{q}|^2}$. If $|\hat{q}|=1$, then it is called a unit dual quaternion. 
Two dual quaternions $\hat{p}$ and $\hat{q}$ are similar if and only if there is a unit dual quaternion $\hat{v}$ such that $\hat{v}^{*}\hat{p}\hat{v}=\hat{q}$. 
We denote by $[\hat{q}]_D$ the equivalent class containing $\hat{q}$.

\begin{lemma}
	[\cite{cui2024spectral,chen2025dual}]
	If $\hat{q}=q_s+q_d\epsilon\in\mathbb{DQ}$, then there exists a dual complex number $\hat{p}=p_{s}+p_d\epsilon\in\mathbb{DC}$ such that $\hat{p}\in[\hat{q}]_D$. Furthermore, there is $\textup{Re}(\hat{q})=\textup{Re}(\hat{p})$ and $|\textup{Im}(\hat{q})|=|\textup{Im}(\hat{p})|$. If $\hat{r}\in\mathbb{DC}\cap[\hat{q}]_D$, then $\hat{r}=\hat{p}$ or $\hat{r}=\hat{p}^*$.
\end{lemma}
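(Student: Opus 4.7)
The plan is to prove the three claims in sequence: existence of a dual complex representative $\hat p$ in $[\hat q]_D$, preservation of the real part and of the magnitude of the imaginary part under the similarity, and uniqueness of $\hat p$ up to quaternionic conjugation. The key structural observation organising the argument is that because $\textup{Re}(\hat q)$ is a dual real number and so commutes with every dual quaternion, for a unit $\hat v$ one has the clean splitting
\begin{equation*}
\hat v^{*}\hat q\hat v=\textup{Re}(\hat q)+\hat v^{*}\textup{Im}(\hat q)\hat v,
\end{equation*}
and both the invariance claims and the existence construction will be organised around this identity.

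For existence I would first invoke the preceding quaternion similarity lemma to pick a unit $v_s\in\mathbb Q$ with $v_s^{*}q_s v_s=\textup{Re}(q_s)+|\textup{Im}(q_s)|\ii\in\mathbb C$, then write $\hat v=v_s+v_d\epsilon$ and parameterise $v_d=v_s z$ with $z$ pure imaginary (which is exactly equivalent to the unit condition $\textup{Re}(v_s^{*}v_d)=0$). A direct expansion of $\hat v^{*}\hat q\hat v$ puts its dual part in the form $[p_s,z]+v_s^{*}q_d v_s$, where $p_s=v_s^{*}q_s v_s\in\mathbb C$. The commutator evaluates to $2|\textup{Im}(q_s)|(z_2\kk-z_3\jj)$ for $z=z_1\ii+z_2\jj+z_3\kk$, so when $\textup{Im}(q_s)\neq 0$ one solves linearly for $z_2,z_3$ to cancel the $\jj,\kk$-components of $v_s^{*}q_d v_s$. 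In the degenerate case $q_s\in\mathbb R$ the standard part is already complex for every choice of $v_s$; I would instead apply the quaternion similarity lemma to $q_d$ to pick $v_s$ with $v_s^{*}q_d v_s\in\mathbb C$ and set $v_d=0$. The invariance claims then fall out of the splitting: to show $\hat v^{*}\textup{Im}(\hat q)\hat v$ is pure imaginary I would use cyclicity $\textup{Re}(\hat a\hat b)=\textup{Re}(\hat b\hat a)$ together with $\hat v\hat v^{*}=|\hat v|^{2}=1$ to reduce to $\textup{Re}(\textup{Im}(\hat q))=0$, after which multiplicativity $|\hat a\hat b|=|\hat a||\hat b|$ gives $|\textup{Im}(\hat p)|=|\hat v^{*}\textup{Im}(\hat q)\hat v|=|\textup{Im}(\hat q)|$.

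For uniqueness, any $\hat r\in\mathbb{DC}\cap[\hat q]_D$ can be written $\hat r=(a+c\epsilon)+(b+d\epsilon)\ii$ with $a+c\epsilon$ already pinned down by $\textup{Re}(\hat r)=\textup{Re}(\hat q)$, and the condition $|b+d\epsilon|=|\textup{Im}(\hat q)|$ interpreted as nonnegative dual numbers leaves $(b,d)$ determined up to an overall sign. Since conjugation by the unit dual quaternion $\jj$ sends any $\hat p\in\mathbb{DC}$ to $\hat p^{*}$, both sign choices are actually realised inside $[\hat q]_D$, so $\hat r\in\{\hat p,\hat p^{*}\}$. I expect the main obstacle to be the existence step—specifically the simultaneous bookkeeping that keeps the unit condition $\textup{Re}(v_s^{*}v_d)=0$ intact while the $\jj,\kk$-components cancel, and the separate treatment of the degenerate case $\textup{Im}(q_s)=0$ where the degrees of freedom in $v_d$ no longer suffice and the burden shifts back onto the choice of $v_s$.
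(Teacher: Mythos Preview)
The paper does not supply its own proof of this lemma; it is quoted from the references \cite{cui2024spectral,chen2025dual} without argument, so there is nothing in the present paper to compare against. Your proposal is essentially correct and self-contained: the splitting $\hat v^{*}\hat q\hat v=\textup{Re}(\hat q)+\hat v^{*}\textup{Im}(\hat q)\hat v$, the commutator computation $[p_s,z]=2|\textup{Im}(q_s)|(z_2\kk-z_3\jj)$ to kill the $\jj,\kk$ components when $\textup{Im}(q_s)\neq 0$, the fallback of conjugating $q_d$ by a suitable $v_s$ when $q_s\in\mathbb R$, and the uniqueness argument via the invariants $\textup{Re}$ and $|\textup{Im}|$ all check out. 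One minor point worth making explicit in the write-up: the invariance of $\textup{Re}$ and $|\textup{Im}|$ must be stated for an \emph{arbitrary} unit $\hat v$, not just the one you construct, since the uniqueness step applies it to whatever $\hat w$ realises $\hat r=\hat w^{*}\hat q\hat w$; your cyclicity and multiplicativity arguments already cover this, but the quantifier should be visible.
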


The $2$-norm and the $2^R$-norm of a dual quaternion vector $\hat{\vx}=\vx_s+\vx_d\epsilon\in\mathbb{DQ}^n$ are 
\begin{equation*}
	\Vert \hat{\vx}\Vert_2=
	\begin{cases}
		\Vert\vx_s\Vert_2+\frac{\vx_s^*\vx_d+\vx_d^*\vx_s}{2\Vert\vx_s\Vert_2}\epsilon, &\text{if }\vx_s\neq 0,\\
		\Vert\vx_d\Vert_2\epsilon, &\text{if }\vx_s=0\text{ and }\hat{\vx}=\vx_d\epsilon,
	\end{cases}
\end{equation*}
 and $\Vert \hat{\vx}\Vert_{2^R}=\sqrt{\Vert \vx_s\Vert_2^2+\Vert \vx_d\Vert_2^2}$, respectively. 
Let $\hat{\vx}_i=\vx_{is}+\vx_{id}\epsilon$ for $i=1,\dots,k$ be dual quaternion vectors. If $\vx_{1s},\vx_{2s},\dots,\vx_{ks}$ are right linearly independent, then we say that $\hat{\vx}_1,\hat{\vx}_2,\dots,\hat{\vx}_k$ are appreciably linearly independent.

A dual quaternion matrix (DQM) $\hat{A}\in\mathbb{DQ}^{m\times n}$ has the standard part $A_s\in\mathbb{Q}^{m\times n}$ and the dual part $A_d\in\mathbb{Q}^{m\times n}$.  
Suppose that $\hat{A}=A_s+A_d\epsilon$ and $\hat{B}=B_s+B_d\epsilon$ are two DQMs. If $\hat{A}\hat{B}=\hat{B}\hat{A}=I$, where $I$ is the $n\times n$ identity matrix, then we say $\hat{A}$ is invertible, $\hat{B}$ is the inverse of $\hat{A}$, and denote $\hat{B}=\hat{A}^{-1}$. Here we have $B_s=A_s^{-1}$ and $B_d=-A_s^{-1}A_dA_s^{-1}$. 
For a DQM $\hat{A}=(\hat{a}_{ij})\in\mathbb{DQ}^{m\times n}$, denote its conjugate transpose as $\hat{A}^*=(\hat{a}_{ji}^*)\in\mathbb{DQ}^{n\times m}$. If $\hat{A}^*=\hat{A}$, then $\hat{A}$ is called a Hermitian DQM, otherwise $\hat{A}$ is called a non-Hermitian DQM. 
The $F$-norm of a DQM $\hat{A}$ is
\begin{equation*}
	\Vert \hat{A}\Vert_{F}=
	\begin{cases}
		\Vert A_s\Vert_F+\frac{tr(A_s^*A_d+A_d^*A_s)}{\Vert A_d\Vert_F}\epsilon,\ & \text{if $A_s\neq 0$},\\
		\Vert A_d\Vert_F\epsilon, &\text{otherwise}.
	\end{cases}
\end{equation*}
This is a dual number. The $F^R$-norm of a DQM is  $\Vert \hat{A}\Vert_{F^R}=\sqrt{\Vert A_s\Vert_F^2+\Vert A_d\Vert_F^2}.$ 
It should be noted that the $2^R$-norm and the $F^R$-norm are not norms since they do not satisfy the scaling condition of the norms \cite{cui2024power}. 

Let $\hat{A}\in\mathbb{DQ}^{n\times n}$, $\hat{\vx}\in\mathbb{DQ}^n$ be appreciable, and $\hat{\lambda}\in\mathbb{DQ}$. If
\begin{equation}\label{eq:eig}
	\hat{A}\hat{\vx}=\hat{\vx}\hat{\lambda},
\end{equation}
then $\hat{\lambda}$ is called a right eigenvalue of $\hat{A}$, with $\hat{\vx}$ as its corresponding right eigenvector. 
Expanding the standard part and dual part of equation~\eqref{eq:eig} yields 
    \begin{equation*} 
    \begin{cases}
         A_s\vx_s=\vx_s\lambda_{s},\\
         A_s\vx_d+A_d\vx_s=\vx_d\lambda_{s}+\vx_s\lambda_d.
    \end{cases} 
    \end{equation*}
If $\hat{\lambda}\in\mathbb{DQ}$ is an eigenvalue of $\hat{A}\in\mathbb{DQ}^{n\times n}$ with the corresponding eigenvector $\hat{\vx}$, then every $\hat{\alpha}^*\hat{\lambda}\hat{\alpha}\in[\lambda]_D$ is an eigenvalue of $\hat{A}$ with the corresponding eigenvector $\hat{\vx}\hat{\alpha}$, where $\hat{\alpha}$ is an arbitrary  unit dual quaternion. 

     
\subsection{Dual complex adjoint matrix (DCAM)}
In this subsection, we introduce DCAM and the connection between the eigenvalue problems of DQMs and DCAMs.

	Let $\hat{Q}=(A_1+A_2\jj)+(A_3+A_4\jj)\epsilon\in\mathbb{DQ}^{m\times n}$, $A_1,A_2,A_3,A_4\in\mathbb{C}^{m\times n}$. The DCAM \cite{chen2025dual} of  $\hat{Q}$ is defined by
	\begin{equation}
		\mathcal{J}(\hat{Q})=
		\begin{pmatrix}
			A_1&A_2\\
			-\overline{A_2}&\overline{A_1}
		\end{pmatrix}+
        \begin{pmatrix}
			A_3&A_4\\
			-\overline{A_4}&\overline{A_3}
		\end{pmatrix}\epsilon\in\mathbb{DC}^{2m\times 2n}.
	\end{equation} 
	Let $\hat{\vv}=(\vv_1+\vv_2\jj)+(\vv_3+\vv_4\jj)\epsilon\in\mathbb{DQ}^{n}$ and  $\mathcal{F}(\hat{\vv})=
	\begin{pmatrix}
		\vv_1\\
		-\overline{\vv}_2
	\end{pmatrix}+
    \begin{pmatrix}
		\vv_3\\
		-\overline{\vv}_4
	\end{pmatrix}\epsilon$, then $\mathcal{J}(\hat{\vv})=
	\begin{pmatrix}
		\mathcal{F}(\hat{\vv})&-\mathcal{F}(\hat{\vv}\jj)
	\end{pmatrix}$. $\mathcal{F}$ is bijection with inverse $\mathcal{F}^{-1}: \mathbb{DC}^{2n}\rightarrow\mathbb{DQ}^{n}$ given by $$\mathcal{F}^{-1}\left(\begin{pmatrix}
	    \vu_1\\
        \vu_2
	\end{pmatrix}+\begin{pmatrix}
	    \vu_3\\
        \vu_4
	\end{pmatrix}\epsilon\right)=\vu_1-\overline{\vu}_2\jj+(\vu_3-\overline{\vu}_4\jj)\epsilon.$$

	\begin{theorem}[\cite{chen2025dual}]\label{Thm:adjoint}
		Let $\hat{Q}\in\mathbb{DQ}^{n\times n}$, $\hat{\vv}\in\mathbb{DQ}^{n\times 1}$, $\hat{\lambda}\in\mathbb{DC}$, $P=\mathcal{J}(\hat{Q}), \vu_1=\mathcal{F}(\hat{\vv})$, and $ \vu_2=\mathcal{F}(\hat{\vv})$, then 
		\begin{equation*}
			\hat{Q}\hat{\vv}=\hat{\vv}\hat{\lambda} \quad \Longleftrightarrow\quad P\vu_1=\hat{\lambda}\vu_1\ or \ P\vu_2=\hat{\lambda}^*\vu_2.
		\end{equation*}
		In addition, $\vu_1$ and $\vu_2$ are orthogonal.
	\end{theorem}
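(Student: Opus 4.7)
The plan is to reduce the quaternion eigenvalue equation to a pair of coupled dual-complex equations and then reassemble them into the block matrix equation $P\vu_1=\hat\lambda\vu_1$ (and, by taking conjugates, into $P\vu_2=\hat\lambda^*\vu_2$). First I would write every object in its Cayley--Dickson form: group the standard and dual parts into dual complex blocks $B_1=A_1+A_3\epsilon$, $B_2=A_2+A_4\epsilon$ so that $\hat Q=B_1+B_2\jj$, and similarly $\hat\vv=\vw_1+\vw_2\jj$ with $\vw_1=\vv_1+\vv_3\epsilon$, $\vw_2=\vv_2+\vv_4\epsilon$. The whole proof then rests on the single commutation identity $\jj z=\bar z\,\jj$ for any $z\in\mathbb{DC}$, which lets me push all $\jj$'s to the right.

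Next I would expand $\hat Q\hat\vv$ using this rule: $\jj\vw_i=\overline{\vw_i}\jj$ and $\jj\vw_2\jj=-\overline{\vw_2}$, so
\begin{equation*}
\hat Q\hat\vv=\bigl(B_1\vw_1-B_2\overline{\vw_2}\bigr)+\bigl(B_1\vw_2+B_2\overline{\vw_1}\bigr)\jj.
\end{equation*}
On the other side, since $\hat\lambda\in\mathbb{DC}$ satisfies $\jj\hat\lambda=\hat\lambda^*\jj$, we get $\hat\vv\hat\lambda=\vw_1\hat\lambda+\vw_2\hat\lambda^*\jj$. Matching the complex and $\jj$-components yields the coupled system
\begin{equation*}
B_1\vw_1-B_2\overline{\vw_2}=\hat\lambda\vw_1,\qquad B_1\vw_2+B_2\overline{\vw_1}=\hat\lambda^*\vw_2,
\end{equation*}
which is exactly equivalent to $\hat Q\hat\vv=\hat\vv\hat\lambda$.

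Then I would repackage these two equations. Writing $\vu_1=\mathcal{F}(\hat\vv)=(\vw_1,-\overline{\vw_2})^\top$ and computing $P\vu_1$ from the definition of $\mathcal{J}(\hat Q)$, the first row of $P\vu_1=\hat\lambda\vu_1$ is the first equation above, while the second row reads $-\overline{B_2}\vw_1-\overline{B_1}\overline{\vw_2}=-\hat\lambda\overline{\vw_2}$, which is obtained by taking the complex conjugate of the second equation. Hence the system is equivalent to $P\vu_1=\hat\lambda\vu_1$. An analogous calculation with $\vu_2$ (constructed from $\mathcal{F}(\hat\vv\jj)$, so that its components involve $\vw_2$ and $\overline{\vw_1}$) shows that the same pair of equations, after taking conjugates of the first one, is equivalent to $P\vu_2=\hat\lambda^*\vu_2$. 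The orthogonality $\vu_1^*\vu_2=0$ then follows from a short direct computation in which the scalar $\vw_1^*\vw_2$ cancels against its transpose $\vw_2^\top\overline{\vw_1}$.

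The conceptual work is entirely concentrated in the commutation step $\jj z=\bar z\jj$ and in the $2\times2$ block bookkeeping; no analytic obstacle appears. The main place where one must be careful is tracking complex conjugates on the dual parts, because $(B_1)^*$ means the conjugate of a dual complex number and must be handled coordinate-by-coordinate in the standard and dual components. Once that is done cleanly, the biconditional and the orthogonality both fall out by direct identification of entries.
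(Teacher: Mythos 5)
Your proposal is correct. Note that the paper does not prove Theorem~\ref{Thm:adjoint} at all — it is quoted from \cite{chen2025dual} — so there is no in-paper argument to compare against; your Cayley--Dickson verification (splitting $\hat{Q}=B_1+B_2\jj$, $\hat{\vv}=\vw_1+\vw_2\jj$ with dual complex blocks, pushing $\jj$ to the right via $\jj z=\bar z\,\jj$, and matching the $1$- and $\jj$-components with the two block rows of $P\vu_1=\hat\lambda\vu_1$ and $P\vu_2=\hat\lambda^*\vu_2$) is exactly the standard route for such complex-adjoint representations, and your computations, including the conjugation of the second row and the cancellation $\vw_1^*\vw_2-\vw_2^\top\overline{\vw_1}=0$ for orthogonality, all check out. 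You also correctly repaired the typo in the statement: as written it sets $\vu_2=\mathcal{F}(\hat{\vv})$, whereas the argument (and the definition $\mathcal{J}(\hat{\vv})=(\mathcal{F}(\hat{\vv})\ -\mathcal{F}(\hat{\vv}\jj))$) requires $\vu_2=\pm\mathcal{F}(\hat{\vv}\jj)$, i.e.\ the vector with blocks $\vw_2$ and $\overline{\vw_1}$.
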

 
\section{The power method for  non-Hermitian dual quaternion matrices}

For a quaternion matrix or a Hermitian DQM, the power method can compute its eigenvalues with the maximum absolute value \cite{li2019power,jia2023computing,cui2024power}.
In this section, we extend the power method of Hermitian DQMs \cite{cui2024power} to the non-Hermitian counterparts, as detailed in Algorithm~\ref{alg_PM}. Despite an identical framework, the theory differs fundamentally.  This is because the eigenvalues $\hat{\lambda}$ of non-Hermitian DQMs are non-commutative dual quaternions, which invalidates the well-established theory for Hermitian matrices, including guarantees of eigenvalue existence and the availability of a Jordan canonical form.

The computational burden of Algorithm~\ref{alg_PM} is dominated by line 2, which requires $O(n^2)$ flops. All other steps execute in $O(n)$ flops. Therefore, each iteration step of Algorithm~\ref{alg_PM} has an overall computational complexity of $O(n^2)$ flops.

\subsection{Existence of eigenvalues}

Given that $\hat A$ may either possess no eigenvalues or infinitely many eigenvalues \cite{qi2023eigenvalues}, we introduce the following assumption to ensure the existence of eigenvalues whose standard part is $\lambda_{1s}$.
\par

\begin{algorithm}[t]
	\caption{The power method (PM) for computing the dominant eigenvalue of  non-Hermitian DQMs.}\label{alg_PM} 
	\begin{algorithmic}[1]
		\Require a non-Hermitian DQM $\hat{A}\in\mathbb{DQ}^{n\times n}$, an initial vector $\hat{\vv}^{(0)}\in\mathbb{DQ}^n$, the maximal iteration number $k_{\max}$ and the tolerance $\delta$.
		\For{$k=1,2,\dots,k_{\max}$}
		\State $\hat{\vy}^{(k)}=\hat{A}\hat{\vv}^{(k-1)}$,
		\State $\hat{\lambda}^{(k-1)}=(\hat{\vv}^{(k-1)})^*\hat{\vy}^{(k)}$.
		\If{$\Vert \hat{\vy}^{(k)}-\hat{\vv}^{(k-1)}\hat{\lambda}^{(k-1)}\Vert_{2^R}\leq \delta$}
        \State break.
		\EndIf
		\State $\hat{\vv}^{(k)}=\frac{\hat{\vy}^{(k)}}{\Vert \hat{\vy}^{(k)}\Vert_2}$.
		\EndFor
		\Ensure $\hat{\vv}^{(k-1)}$, $\hat{\lambda}^{(k)}$.
	\end{algorithmic}
\end{algorithm}

\begin{assumption}\label{assump}  
 Let $\hat{A}=A_s+A_d\epsilon\in\mathbb{DQ}^{n\times n}$, and  let $\lambda_{1s},\dots,\lambda_{ms}\in\mathbb{C}$ be distinct standard eigenvalues of $A_s$. We assume that   
\begin{equation}\label{eq:eigen} |\lambda_{1s}|>|\lambda_{2s}|\geq\cdots\geq|\lambda_{ms}| \text{  and   } \mathtt{m}_a(\lambda_{1s},A_s)=\mathtt{m}_g(\lambda_{1s},A_s). 
\end{equation}  
\end{assumption} 
 In the following theorem, we demonstrate that Assumption~\ref{assump} serves as a sufficient condition for the existence of eigenvalues whose standard part is $\lambda_{1s}$. 
 It should be noted, however, that this assumption is not necessary, as illustrated by 
 $\begin{pmatrix}
     2+\epsilon&1&0\\
     0&2&0\\
     0&0&1
 \end{pmatrix}$. Despite violating Assumption~\ref{assump}, it possesses an eigenvalue $2+\alpha\epsilon$ whose standard part equals $\lambda_{1s}=2$, and the corresponding eigenvector $[1+(\alpha-1)\epsilon,0,0]^T$ , where $\alpha$ can be any quaternion. 

 We first state a lemma regarding dual complex matrices, which serves as the foundation for the sufficiency of Assumption~\ref{assump} for DQMs. 
\begin{lemma}\label{Lem:complex}
   Suppose $\hat{B}=B_s+B_d\epsilon \in \mathbb{DC}^{n \times n}$ is a dual complex matrix and $\lambda_{1s}\in\mathbb{C}$ is an eigenvalue of $B_s$. If $\mathtt{m}_a(\lambda_{1s},B_s)=\mathtt{m}_g(\lambda_{1s},B_s)$, then $\hat{B}$ has at least one eigenvalue with standard part equal to $\lambda_{1s}$. Moreover, the number of such eigenvalues is finite. 
\end{lemma}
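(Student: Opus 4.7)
The plan is to use the complex Jordan form of $B_s$ to reduce the dual eigenvalue equation to an ordinary complex eigenvalue problem on a $k \times k$ block, and then invoke the fundamental theorem of algebra.

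First, I would write out what it means for $\hat{\lambda} = \lambda_{1s} + \lambda_d \epsilon$ to be an eigenvalue of $\hat{B}$ with an appreciable eigenvector $\hat{x} = x_s + x_d \epsilon$. Expanding $\hat{B}\hat{x} = \hat{x}\hat{\lambda}$ and matching standard and dual parts (which is simpler than in the quaternion setting because $\mathbb{C}$ is commutative), the problem becomes
\[
B_s x_s = \lambda_{1s} x_s, \qquad (B_s - \lambda_{1s} I) x_d = (\lambda_d I - B_d) x_s.
\]
So $x_s$ is an ordinary eigenvector of $B_s$ for $\lambda_{1s}$, and $x_d$ must solve a linear system in which the dual part $\lambda_d$ appears as a free parameter.

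Next, invoking the hypothesis $\mathtt{m}_a(\lambda_{1s},B_s) = \mathtt{m}_g(\lambda_{1s},B_s) =: k$ together with the complex Jordan canonical form, there is an invertible $P \in \mathbb{C}^{n \times n}$ with
\[
P^{-1} B_s P = J = \diag(\lambda_{1s} I_k,\; J'),
\]
where $J'$ collects the Jordan blocks corresponding to the other eigenvalues, so that $J' - \lambda_{1s} I$ is invertible. Setting $\tilde{B}_d := P^{-1} B_d P$ and partitioning it conformally as $\tilde{B}_d = \begin{pmatrix} C_{11} & C_{12} \\ C_{21} & C_{22} \end{pmatrix}$, and writing $x_s = P \binom{z}{0}$ and $x_d = P \binom{u}{v}$, the second equation of the expanded system splits into
\[
(\lambda_d I_k - C_{11}) z = 0, \qquad (J' - \lambda_{1s} I) v = -C_{21} z.
\]
The bottom block uniquely determines $v$ once $z$ is chosen, $u$ remains free (reflecting the usual ambiguity in an eigenvector), and the top block reduces the entire existence question to finding an eigenpair of the $k \times k$ complex matrix $C_{11}$.

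Since $k \geq 1$, the fundamental theorem of algebra supplies at least one eigenvalue $\lambda_d$ of $C_{11}$ together with a nonzero eigenvector $z$, which yields an eigenpair $(\hat{\lambda}, \hat{x})$ of $\hat{B}$ with standard part $\lambda_{1s}$ and with $\hat{x}$ appreciable because $z \neq 0$ forces $x_s \neq 0$. For the finiteness claim I would simply note that the argument is reversible: any $\hat{\lambda} = \lambda_{1s} + \lambda_d \epsilon$ with standard part $\lambda_{1s}$ forces $\lambda_d$ to be an eigenvalue of $C_{11}$, and $C_{11}$ has at most $k$ distinct eigenvalues. I do not anticipate a serious obstacle; the only point that deserves careful attention is recognizing that $\lambda_d$ is \emph{constrained} to lie in $\sigma(C_{11})$ rather than being freely selectable, since this two-way restriction is precisely what produces both existence (via the fundamental theorem of algebra) and finiteness, and it relies crucially on the semisimplicity assumption $\mathtt{m}_a = \mathtt{m}_g$ for $\lambda_{1s}$.
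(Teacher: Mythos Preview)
Your proposal is correct and follows essentially the same route as the paper: both reduce via a similarity that block-diagonalizes $B_s$ as $\diag(\lambda_{1s}I_k,\,\cdot)$, split the dual-part equation into a top block forcing $\lambda_d$ to be an eigenvalue of the $k\times k$ corner of the transformed $B_d$ and a bottom block that is uniquely solvable because $\lambda_{1s}$ is not an eigenvalue of the complementary piece, and then read off both existence and finiteness from the spectrum of that corner matrix. The only cosmetic differences are that the paper absorbs the change of basis into a ``without loss of generality'' and fixes the free component of $x_d$ to zero rather than noting it is arbitrary.
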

\begin{proof}
 Given that $\mathtt{m}_a(\lambda_{1s},B_s)=\mathtt{m}_g(\lambda_{1s},B_s)$, we may assume without loss of generality that 
    \begin{equation*}
        B_s=\begin{pmatrix}
        \lambda_{1s}I_{n_1}&\\
        &C
        \end{pmatrix},\   B_d=
        \begin{pmatrix}
           D&E\\
           F&G
        \end{pmatrix},
    \end{equation*}
    where $C\in\mathbb{C}^{(n-n_1)\times (n-n_1)}$ and none of the eigenvalues of $C$
    equals $\lambda_{1s}$,   
    $D\in\mathbb{C}^{n_1\times n_1}$, $F\in\mathbb{C}^{(n-n_1)\times n_1}, E\in\mathbb{C}^{n_1\times (n-n_1)}$, and $G\in\mathbb{C}^{(n-n_1)\times (n-n_1)}$.   
   Let $\lambda_s=\lambda_{1s}$, $\vx_s=\begin{pmatrix}
        \mathbf{y}\\
        \mathbf{0}
    \end{pmatrix}\in\mathbb{C}^{n}$, where $\mathbf{y}\in\mathbb C^{n_1}$ is a unit right eigenvector of $D$ corresponding to the eigenvalue $\lambda_d=\mathbf{y}^*D\mathbf{y}$, and  
      $\vx_d=\begin{pmatrix}
        \mathbf{0}\\
        \mathbf{z}
    \end{pmatrix}$, where $\mathbf{z}\in\mathbb C^{n-n_1}$. 
    Then we may check that $\lambda=\lambda_s+\lambda_d\epsilon$ and $\vx=\vx_s+\vx_d\epsilon$ is an eigenpair of $\hat B$ as long as $\mathbf{z}$ satisfies the equation $(C-\lambda_{1s}I_{n-n_1})\mathbf{z}=-F\mathbf{y}$. The equation has a unique solution, since none of the eigenvalues of $C$ equals $\lambda_{1s}$. 

    If $\lambda_{d}$ is not an eigenvalue of $D$, then the eigenvalue equation $B_s\vx_d+B_d\vx_s=\lambda_{1s}\vx_d+\lambda_d\vx_s$ cannot hold. Consequently, the number of eigenvalues whose standard part equals $\lambda_{1s}$ coincides with the number of eigenvalues of $D$, which is finite. 
    This completes the proof.  
\end{proof}

We now utilize Lemma~\ref{Lem:complex} to prove the existence of eigenvalues for DQMs under Assumption~\ref{assump}.
\begin{theorem}
    Suppose that $\hat{A}\in\mathbb{DQ}^{n\times n}$ satisfies Assumption~\ref{assump}. Then $\hat{A}$ has at least one eigenvalue whose standard part is $\lambda_{1s}$. Furthermore, the number of eigenvalues whose standard part lies in $[\lambda_{1s}]$ is finite.
\end{theorem}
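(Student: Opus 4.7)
The plan is to reduce the problem to the dual complex setting via the quaternion Jordan decomposition of $A_s$ followed by the dual complex adjoint matrix (DCAM), and then invoke Lemma~\ref{Lem:complex}.

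First, I would apply Theorem~\ref{Thm:Jordan_Q} to $A_s$. Assumption~\ref{assump} forces every Jordan block associated with $\lambda_{1s}$ to have size one, so there is an invertible $P \in \mathbb{Q}^{n \times n}$ with
$$P^{-1} A_s P = \begin{pmatrix} \lambda_{1s} I_{n_1} & 0 \\ 0 & C \end{pmatrix},$$
where $C$ is a quaternion matrix whose distinct standard eigenvalues are exactly $\lambda_{2s}, \ldots, \lambda_{ms}$, none of which lies in $[\lambda_{1s}]$ (since the standard eigenvalues are distinct complex numbers with nonnegative imaginary parts, and $[\lambda_{1s}] \cap \mathbb{C} \subseteq \{\lambda_{1s},\overline{\lambda_{1s}}\}$). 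Setting $\hat B = P^{-1}\hat A P$, similarity preserves eigenvalues and maps eigenvectors by $\hat \vx \mapsto P^{-1}\hat \vx$, so it suffices to prove the theorem for $\hat B$.

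Next, I would pass to $\mathcal{J}(\hat B) \in \mathbb{DC}^{2n \times 2n}$. Because $\mathcal{J}$ is multiplicative and $\lambda_{1s}\in\mathbb{C}$, the complex adjoint of the upper-left block $\lambda_{1s}I_{n_1}$ contributes $\lambda_{1s}$ with algebraic and geometric multiplicities both equal to $n_1$; the complex adjoint of $C$ has eigenvalues in $\{\lambda_{is},\overline{\lambda_{is}} : i \ge 2\}$, none of which coincides with $\lambda_{1s}$. Hence $\mathtt{m}_a(\lambda_{1s},\mathcal{J}(B_s)) = \mathtt{m}_g(\lambda_{1s},\mathcal{J}(B_s))$, so Lemma~\ref{Lem:complex} produces at least one, and at most finitely many, dual complex eigenvalues of $\mathcal{J}(\hat B)$ whose standard part is $\lambda_{1s}$. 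By Theorem~\ref{Thm:adjoint}, each such eigenvalue of $\mathcal{J}(\hat B)$ is also an eigenvalue of $\hat B$, and hence of $\hat A$; this establishes the existence claim.

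For finiteness, I would take any eigenvalue $\hat \lambda$ of $\hat A$ with $\textup{st}(\hat \lambda)\in[\lambda_{1s}]$ and use the dual-quaternion similarity lemma to pick a dual complex representative $\hat \nu \in [\hat \lambda]_D$, whose standard part must be $\lambda_{1s}$ or $\overline{\lambda_{1s}}$. Since similarity preserves eigenpairs, $\hat \nu$ is itself an eigenvalue of $\hat A$ and, via Theorem~\ref{Thm:adjoint}, a dual complex eigenvalue of $\mathcal{J}(\hat A)$; applying Lemma~\ref{Lem:complex} at both $\lambda_{1s}$ and $\overline{\lambda_{1s}}$ bounds the number of such $\hat \nu$, thereby bounding the number of equivalence classes $[\hat \lambda]_D$. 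The main technical obstacle is verifying that $\mathcal{J}$ preserves the semisimplicity of $\lambda_{1s}$ and correctly transports the algebraic/geometric multiplicities; this rests on the multiplicativity of $\mathcal{J}$ together with the explicit $1\times 1$ block structure of the $\lambda_{1s}$-eigenblock in the quaternion Jordan form.
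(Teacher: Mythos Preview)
Your proposal is correct and follows essentially the same route as the paper: pass to the dual complex adjoint $\mathcal{J}(\hat A)$, verify that $\lambda_{1s}$ is a semisimple eigenvalue of the standard part, invoke Lemma~\ref{Lem:complex}, and transfer back via Theorem~\ref{Thm:adjoint}. The paper does this directly for $\hat A$ and simply asserts that $\mathtt{m}_a(\lambda_{1s},\mathcal{J}(A_s))=\mathtt{m}_g(\lambda_{1s},\mathcal{J}(A_s))$, whereas you insert the preliminary quaternion Jordan reduction $\hat B=P^{-1}\hat AP$ precisely to make that semisimplicity claim transparent; this extra step is not needed but does fill in a point the paper leaves implicit. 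One small correction: when $\lambda_{1s}\in\mathbb{R}$ the block $\mathcal{J}(\lambda_{1s}I_{n_1})$ contributes $\lambda_{1s}$ with multiplicity $2n_1$ rather than $n_1$, though of course algebraic and geometric multiplicities still agree, so your argument is unaffected.
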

\begin{proof}
    By Theorem \ref{Thm:adjoint}, $\lambda_{1s}$ is an eigenvalue of $\mathcal{J}(A_s)\in\mathbb{C}^{2n\times 2n}$, with $\mathtt{m}_a(\lambda_{1s},A_s)=\mathtt{m}_g(\lambda_{1s},A_s)$. Lemma~\ref{Lem:complex} further implies that $\mathcal{J}(\hat{A})$ has at least one eigenvalue whose standard part is $\lambda_{1s}$, and the number of such eigenvalues is finite. Applying Theorem \ref{Thm:adjoint} once more, we conclude that $\hat{A}$ also has at least one eigenvalue whose standard part lies in $[\lambda_{1s}]$, and the number of such eigenvalues remains finite.
\end{proof}

However, the total number of eigenvalues may be infinite, as illustrated in the following example. 
\begin{example}
    A non-Hermitian DQM $\hat{A}\in\mathbb{DQ}^{n\times n}$ satisfies Assumption~\ref{assump}, but it has infinitely many eigenvalues.  
    Let $\hat{A}=A_s+A_d\epsilon$, where
    \begin{equation*}
        A_s=\begin{pmatrix}
            2&0&0\\
            0&1&1\\
            0&0&1
        \end{pmatrix} \text{ and } 
        A_d=\begin{pmatrix}
            1&0&0\\
            0&1&0\\
            0&0&1
        \end{pmatrix}.
    \end{equation*}
    By direct computations, the eigenvalues of $\hat{A}$ are $2+\epsilon,1+\alpha\epsilon$, with corresponding eigenvectors $[1,0,0]^\top$ and $[0,1+(\alpha+1)\epsilon,0]^\top$ respectively, where $\alpha$ can be any quaternion. Thus, $\hat{A}$ has infinitely many eigenvalues. 
\end{example}


\subsection{The Jordan-like form}

A crucial step in the convergence analysis of Algorithm~\ref{alg_PM} involves understanding powers of DQMs. Recall that for any quaternion matrix $Q$, its $k$-th power $Q^k$ can be expressed using its Jordan canonical form. Specifically, if  $Q=P^{-1}JP$, where $P$ is an invertible quaternion matrix and $J$ is Jordan form of $Q$, then $Q^k=P^{-1}J^kP$. 
Similar properties extend to certain classes of dual number matrices including Hermitian DQMs~\cite{qi2021eigenvalues}. Moreover, for non-Hermitian dual complex matrices whose standard part is a Jordan block, their Jordan structure has been studied in~\cite{qi2023eigenvalues}. Nevertheless, these existing results do not directly generalize to general non-Hermitian DQMs. To facilitate the convergence analysis of Algorithm~\ref{assump}, we introduce a Jordan-like canonical form tailored to this broader class of matrices.

\begin{theorem}[Jordan-like form]\label{Thm:Jordan like}
Suppose $\hat{A} = A_s + A_d\epsilon \in \mathbb{DQ}^{n \times n}$, and $A_s$ has standard eigenvalues $\lambda_{1s}, \lambda_{2s}, \dots,$ $\lambda_{ms}$ $\in \mathbb{C}$. Let $P_s \in \mathbb{Q}^{n \times n}$ and the Jordan blocks $\{J_i\}_{i=1}^m$ be defined as in Theorem~\ref{Thm:Jordan_Q}. Then there exists a DQM $\hat{P} = P_s + P_d\epsilon \in \mathbb{DQ}^{n \times n}$ and matrices $H_i \in \mathbb{Q}^{n_i \times n_i}$ such that $\hat{P}^{-1}\hat{A}\hat{P}$ takes the following block diagonal Jordan-like form:  
	\begin{equation}\label{eq:diag}
		\hat{P}^{-1}\hat{A}\hat{P}=
		\begin{pmatrix}
			J_1+H_1\epsilon&&&\\
			&J_2+H_2\epsilon&&\\
			&&\ddots&\\
			&&&J_m+H_m\epsilon
		\end{pmatrix}.
	\end{equation} 
\end{theorem}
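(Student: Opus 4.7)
The plan is to look for $\hat{P}$ of the form $\hat{P} = P_s(I + L\epsilon)$, where $P_s$ is the quaternion Jordan transformation matrix from Theorem~\ref{Thm:Jordan_Q} (so that $P_s^{-1}A_sP_s = J = \diag(J_1,\ldots,J_m)$), and $L\in\mathbb{Q}^{n\times n}$ is a free parameter to be chosen. Since $\hat{P}^{-1} = (I - L\epsilon)P_s^{-1}$, a direct expansion gives
\begin{equation*}
\hat{P}^{-1}\hat{A}\hat{P} \;=\; J + (JL - LJ + K)\epsilon, \qquad K := P_s^{-1}A_dP_s.
\end{equation*}
The theorem therefore reduces to choosing $L$ so that $JL - LJ + K$ is block diagonal with the same block partition as $J$.

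Partition $L = (L_{ij})$ and $K = (K_{ij})$ into blocks of size $n_i \times n_j$. The $(i,j)$-block of $JL - LJ + K$ is $J_i L_{ij} - L_{ij} J_j + K_{ij}$. On the diagonal I set $L_{ii} = 0$ and define $H_i := K_{ii}$, which imposes no constraint and yields exactly the diagonal blocks $J_i + H_i\epsilon$ required in \eqref{eq:diag}. The real work is to show, for each off-diagonal pair $i\neq j$, that the quaternion Sylvester equation
\begin{equation*}
J_i L_{ij} - L_{ij} J_j = -K_{ij}
\end{equation*}
admits a solution $L_{ij}\in\mathbb{Q}^{n_i\times n_j}$.

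The main obstacle is the solvability of this quaternion Sylvester equation. My plan is to decompose $L_{ij} = X + Y\jj$ and $K_{ij} = U + V\jj$ with $X,Y,U,V$ complex matrices. Because $J_i$ and $J_j$ have complex entries (their diagonal entries $\lambda_{is},\lambda_{js}$ lie in $\mathbb{C}$), the identity $\jj z = \bar z\jj$ for $z\in\mathbb{C}$ decouples the equation into the two complex Sylvester equations
\begin{equation*}
J_i X - X J_j = -U, \qquad J_i Y - Y\,\overline{J_j} = -V.
\end{equation*}
The first is uniquely solvable since $\sigma(J_i)=\{\lambda_{is}\}$ and $\sigma(J_j)=\{\lambda_{js}\}$ are disjoint by the distinctness of standard eigenvalues. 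For the second, $\overline{J_j}$ has spectrum $\{\overline{\lambda_{js}}\}$; the equality $\lambda_{is} = \overline{\lambda_{js}}$ would force both imaginary parts to vanish (they are nonnegative by the standard eigenvalue convention), making $\lambda_{is}$ and $\lambda_{js}$ equal real numbers and contradicting distinctness, so the second equation is also uniquely solvable.

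Assembling the blocks $L_{ij}$ (and $L_{ii}=0$) into $L$ and setting $P_d := P_s L$, I obtain $\hat{P} = P_s + P_d\epsilon$, which is invertible because $P_s$ is invertible and $(I+L\epsilon)^{-1}=I-L\epsilon$. With $H_i := K_{ii}$, the similarity transformation $\hat{P}^{-1}\hat{A}\hat{P}$ then takes exactly the block diagonal Jordan-like form~\eqref{eq:diag}, completing the proof.
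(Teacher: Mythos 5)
Your proof is correct and follows essentially the same route as the paper: the ansatz $\hat{P} = P_s(I + L\epsilon)$ (equivalently $D = P_s^{-1}P_d$ in the paper) reduces the dual part to the block Sylvester equations $J_iL_{ij} - L_{ij}J_j = -K_{ij}$ for $i \neq j$, with the diagonal blocks left alone to define $H_i = K_{ii}$. The only difference is in how solvability is justified: the paper cites a theorem on quaternion Sylvester equations (Rodman, Theorem 5.11.1), while you prove it directly by splitting $L_{ij} = X + Y\jj$ into two complex Sylvester equations and correctly noting that the standard-eigenvalue convention (nonnegative imaginary parts) excludes $\lambda_{is} = \overline{\lambda_{js}}$, which gives a valid, self-contained replacement for the citation.
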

\begin{proof}
    By direct computations, we obtain that 
    \begin{equation}\label{eq:proof3.1}
    \begin{aligned}
        \hat{P}^{-1}\hat{A}\hat{P}&=(P_s^{-1}-P_s^{-1}P_dP_s^{-1}\epsilon)(A_s+A_d\epsilon)(P_s+P_d\epsilon)\\
        &=P_s^{-1}A_sP_s+(P_s^{-1}A_dP_s+P_s^{-1}A_sP_sD-DP_s^{-1}A_sP_s)\epsilon, 
    \end{aligned}
    \end{equation}
    where $D=P_s^{-1}P_d$ in the last equality. Since the standard part of $\hat{P}^{-1}\hat{A}\hat{P}$ is already consistent with the form of  \eqref{eq:diag}, we only need to consider the dual part  of \eqref{eq:diag}.  To this end, the key lies in selecting an appropriate $P_d$, which is equivalent to choosing a suitable $D$ since $P_s$ is an invertible matrix.
    
    For notational convenience, denote  $P_s=(V_{1}\ V_2\ \cdots\ V_m)$, $(P_s^{-1})^*=(W_{1}\ W_{2}\ \cdots\ W_{m})$, where $V_i,W_i\in\mathbb{Q}^{n\times n_i}$.
    Furthermore, let $D=(D_{ij})$, 	where each block $D_{ij}\in\mathbb{Q}^{n_i\times n_j}$. Combining with $P_s^{-1}A_sP_s=\diag(J_1,\dots,J_m)$, we obtain the dual part of $\hat{P}^{-1}\hat{A}\hat{P}$ as follows, 
	\begin{equation*}
		\begin{aligned}
			&P_s^{-1}A_dP_s+P_s^{-1}A_sP_sD-DP_s^{-1}A_sP_s\\
			=&P_s^{-1}A_dP_s+
			\begin{pmatrix}
					J_1D_{11}-D_{11}J_1&J_1D_{12}-D_{12}J_2&\cdots&J_1D_{1m}-D_{1m}J_m\\
					J_2D_{21}-D_{21}J_1&J_2D_{22}-D_{22}J_2&\cdots&J_2D_{2m}-D_{2m}J_m\\
					\vdots&\vdots&\ddots&\vdots\\
					J_mD_{m1}-D_{m1}J_1&J_mD_{m2}-D_{m2}J_2&\cdots&J_mD_{mm}-D_{mm}J_m
			\end{pmatrix}\\
			=&
				\begin{pmatrix}
					K_{11}&K_{12}&\cdots&K_{1m}\\
					K_{21}&K_{22}&\cdots&K_{2m}\\
					\vdots&\vdots&\ddots&\vdots\\
					K_{m1}&K_{m2}&\cdots&K_{mm}
			\end{pmatrix},
		\end{aligned}
	\end{equation*}
    where $K_{ij}=W_i^*A_dV_j+J_iD_{ij}-D_{ij}J_j$.

	For any $i,j\in\{ 1,\dots,m\}$, the equation 
    $$K_{ij}=W_i^*A_dV_j+J_iD_{ij}-D_{ij}J_j=0$$ 
    is a Sylvester equation in $D_{ij}$. According to \cite[Theorem 5.11.1] {rodman2014topics},  a solution $D_{ij}$ exists whenever $i\neq j$, since the right eigenvalues of $J_i$ and $J_j$ are distinct. For $i=j$, we   set $D_{ii}=I_{n_i}$ and $H_i=W_i^*A_dV_i$.
    Consequently, 
	\begin{equation*}
		\begin{aligned}
			\hat{P}^{-1}\hat{A}\hat{P}
			&=P_s^{-1}AP_s+(P_s^{-1}A_dP_s+P_s^{-1}A_sP_sD-DP_s^{-1}A_sP_s)\epsilon.\\
			&=\diag(J_1+H_1\epsilon,J_2+H_2\epsilon,\cdots,J_m+H_m\epsilon).
		\end{aligned}
	\end{equation*} 
	This completes the proof.
\end{proof}

 It is important to note that the Sylvester equations are solely employed for theoretical purposes in  
 computing $P_d$,  and are not required in the practical implementation of Algorithm~\ref{alg_PM}. Indeed, Theorem~\ref{Thm:Jordan like} is the key for expressing the power of DQMs in our analysis. Namely,  
 \begin{equation}
    A^k = P\text{diag}((J_1+H_1\epsilon)^k,\dots,(J_m+H_m\epsilon)^k)P^{-1}. 
 \end{equation}

\begin{remark}
Unlike the Jordan form of a real matrix, the Jordan-like form of a DQM is not unique.
In the construction outlined in Theorem~\ref{Thm:Jordan like}, the block $D_{ii}$ can be chosen arbitrarily.  $\hat{P}^{-1}\hat{A}\hat{P}$ still retains the block diagonal form given in \eqref{eq:diag} with $H_i=W_i^*A_dV_i+J_iD_{ii}-D_{ii}J_i$.  
\end{remark}


\subsection{Convergence analysis}
In this subsection, we conduct convergence analysis for the power method. Prior to this, we present the convergence conditions of the power method. For convenience,  we define the multiplicity of the eigenvalue of a DQM.
\begin{definition}[Multiplicity]\label{def:mul}
    Suppose that $\hat{A}\in\mathbb{DQ}^{n\times n}$ is a DQM, $\hat{\lambda}$ is an eigenvalue of $\hat{A}$. The multiplicity of  $\hat{\lambda}$, denoted by $\mathtt{m}(\hat{\lambda}, \hat{A})$, is defined as the cardinality of a maximal set of appreciably linearly independent eigenvectors in the eigenspace $\{\hat{\vx} \in \mathbb{DQ}^{n} \mid \hat{A}\hat{\vx} = \hat{\vx}\hat{\lambda}\}$.
\end{definition}

It follows from Definition~\ref{def:mul} that the multiplicity of $\lambda=\lambda_{s}+\lambda_d\epsilon$  is less than or equal to the algebraic multiplicity of  $\lambda_{s}$, i.e., 
\[\mathtt{m}(\hat{\lambda},\hat{A})\leq \mathtt{m}_g(\lambda_s,A_s)\leq \mathtt{m}_a(\lambda_s,A_s).\]

We now turn to the dominant eigenvalue of non-Hermitian DQMs. Suppose that $\hat{A}=A_s+A_d\epsilon\in\mathbb{DQ}^{n\times n}$. We say that an eigenvalue $\hat{\lambda}_1=\lambda_{1s}+\lambda_{1d}\epsilon$ is a dominant eigenvalue of $\hat{A}$,  if for any other eigenvalue $\hat{\mu}=\mu_{s}+\mu_{d}\epsilon$ of $\hat{A}$,  the following holds: 
\begin{equation*}
	|\lambda_{1s}|\geq |\mu_{s}|. 
\end{equation*}
The  corresponding eigenvector is called a dominant eigenvector. 
Furthermore, we say  $\hat{\lambda}_1=\lambda_{1s}+\lambda_{1d}\epsilon$ is a strict dominant eigenvalue of $\hat{A}$, with multiplicity $n_1$, if for any other eigenvalue $\hat{\mu}=\mu_{s}+\mu_{d}\epsilon$ of $\hat{A}$,  
\begin{equation*}
 |\lambda_{1s}|> |\mu_{s}|  \text{ and }  \mathtt{m}(\hat{\lambda}_1,\hat{A})=n_1. 
\end{equation*}




In the following, we propose the following assumption to ensure the convergence of the power method. This assumption generalizes the notion of the strict dominant eigenvalue for Hermitian DQMs, as discussed in \cite{cui2024power}, thereby encompassing it as a special case. 
\begin{assumption}\label{assump2}  
Let 
$\hat{A}=A_s+A_d\epsilon\in\mathbb{DQ}^{n\times n}$ be a DQM, and let $\lambda_{1s},\dots,\lambda_{ms}\in\mathbb{C}$ be distinct standard right eigenvalues of $A_s$. We assume that $|\lambda_{1s}|$  is larger than the magnitude of any other eigenvalues of $A_s$, 
and one of the following conditions holds: 
\begin{itemize} 
   \item[(i)] $\lambda_{1s}\in \mathbb R$, and there exists an eigenvalue $\hat{\lambda}_1=\lambda_{1s}+\lambda_{1d}\epsilon\in\mathbb{DR}$  such that $\mathtt{m}(\hat{\lambda}_1,\hat{A})=\mathtt{m}_a(\lambda_{1s},A_s)$;   
    \item[(ii)] there exists an eigenvalue $\hat{\lambda}_1=\lambda_{1s}+\lambda_{1d}\epsilon\in\mathbb{DC}$  such that $\mathtt{m}(\hat \lambda_{1},\hat A)=\mathtt{m}_g(\lambda_{1s},A_s)=\mathtt{m}_a(\lambda_{1s},A_s)=1$. 
\end{itemize}
\end{assumption}
We further present the Jordan-like form of DQMs that satisfy Assumptions~\ref{assump} and \ref{assump2}. Firstly, when $\hat{A}$ satisfies Assumption~\ref{assump} and $\lambda_{1s}\in\mathbb{R}$, the Jordan-like form given in \eqref{eq:diag} can be simplified to a more concise form, namely, $H_1$ can be reduced to a Jordan matrix.


\begin{lemma}\label{Lem:Jordan:mis}
     Suppose $\hat{A}\in\mathbb{DQ}^{n\times n}$ satisfies Assumption~\ref{assump}, and $\lambda_{1s}\in\mathbb{R}$. Then there exists  $\hat{P} = P_s + P_d\epsilon \in \mathbb{DQ}^{n \times n}$ and $H_i \in \mathbb{Q}^{n_i \times n_i}$ such that $\hat{P}^{-1}\hat{A}\hat{P}$ takes the following block diagonal Jordan-like form: 
    \begin{equation}\label{eq:diag:mis}
		\hat{P}^{-1}\hat{A}\hat{P}=
		\begin{pmatrix}
			\lambda_{1s}I_{n_1}+J_{1d}\epsilon&&&\\
			&J_2+H_2\epsilon&&\\
			&&\ddots&\\
			&&&J_m+H_m\epsilon
		\end{pmatrix}，
	\end{equation}
    where $J_{1d}$ is a Jordan matrix. If $\lambda_{1s}+\lambda_{1d}\epsilon$ is an eigenvalue of $\hat{A}$, then $\mathtt{m}(\lambda_{1s}+\lambda_{1d}\epsilon,\hat{A})=\mathtt{m}(\lambda_{1d},J_{1d})$.
\end{lemma}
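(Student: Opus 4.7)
The plan is to build directly on Theorem~\ref{Thm:Jordan like}. Under Assumption~\ref{assump}, the equality $\mathtt{m}_a(\lambda_{1s},A_s)=\mathtt{m}_g(\lambda_{1s},A_s)$ forces every Jordan sub-block of $A_s$ associated with $\lambda_{1s}$ to have size one, so $J_1 = \lambda_{1s}I_{n_1}$ in the decomposition $\hat{P}^{-1}\hat{A}\hat{P} = \diag(J_1 + H_1\epsilon,\ldots,J_m + H_m\epsilon)$. The first block is therefore $\lambda_{1s}I_{n_1} + H_1\epsilon$ with $H_1 \in \mathbb{Q}^{n_1 \times n_1}$. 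Since $\lambda_{1s} \in \mathbb{R}$ is central in the quaternion algebra, this is precisely the setting that allows me to further reduce $H_1$ to Jordan form without disturbing the scalar part.

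Next I apply Theorem~\ref{Thm:Jordan_Q} to the quaternion matrix $H_1$ to obtain an invertible $Q_s \in \mathbb{Q}^{n_1 \times n_1}$ such that $Q_s^{-1}H_1 Q_s = J_{1d}$ is in Jordan form. Define the dual-free block diagonal DQM $\hat{Q} = \diag(Q_s, I_{n_2}, \ldots, I_{n_m})$. Conjugation by $\hat{Q}$ leaves the blocks $J_i + H_i\epsilon$ for $i\geq 2$ unchanged, while the first block transforms as $Q_s^{-1}(\lambda_{1s}I_{n_1} + H_1\epsilon)Q_s = \lambda_{1s}I_{n_1} + J_{1d}\epsilon$, since the real scalar $\lambda_{1s}$ commutes with $Q_s$. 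Hence the composite similarity $\hat{P}\hat{Q}$ realizes the desired form~\eqref{eq:diag:mis}.

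For the multiplicity claim, let $\hat{\lambda}_1 = \lambda_{1s}+\lambda_{1d}\epsilon$ be an eigenvalue of $\hat{A}$ with eigenvector $\hat{\vx}$, and set $\hat{\vy} = (\hat{P}\hat{Q})^{-1}\hat{\vx}$, partitioned into blocks $\hat{\vy}_i = \vy_{is} + \vy_{id}\epsilon$ compatible with~\eqref{eq:diag:mis}. The transformed eigenvalue equation decouples block-wise. For each $i \geq 2$, the standard part gives $J_i \vy_{is} = \vy_{is}\lambda_{1s}$; since $\lambda_{1s}$ and the standard eigenvalue $\lambda_{is}$ of $J_i$ are distinct and lie in disjoint quaternion similarity classes (with $\lambda_{1s}$ real), this forces $\vy_{is}=0$, and the dual-part equation then reduces to $J_i \vy_{id} = \vy_{id}\lambda_{1s}$, yielding $\vy_{id} = 0$ by the same argument. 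For $i=1$, the standard part is automatic while the dual part reduces to $J_{1d}\vy_{1s} = \vy_{1s}\lambda_{1d}$, so $\vy_{1s}$ must be a right eigenvector of $J_{1d}$ for $\lambda_{1d}$, and $\vy_{1d}$ remains free. Since appreciable linear independence is measured by standard parts and $\hat{P}\hat{Q}$ is invertible, $\mathtt{m}(\hat{\lambda}_1,\hat{A})$ equals the number of right-linearly independent eigenvectors of $J_{1d}$ for $\lambda_{1d}$, which is $\mathtt{m}(\lambda_{1d}, J_{1d})$.

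The main technical obstacle I anticipate is the $i \geq 2$ block analysis, specifically ruling out nontrivial $\vy_{id}$ once $\vy_{is}=0$ is established; this relies on the disjointness of the quaternion similarity classes $[\lambda_{1s}]$ and $[\lambda_{is}]$, which follows from Assumption~\ref{assump} together with the hypothesis $\lambda_{1s} \in \mathbb{R}$. Once this is in hand, the remaining arguments are bookkeeping on top of Theorems~\ref{Thm:Jordan like} and~\ref{Thm:Jordan_Q}.
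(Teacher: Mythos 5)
Your construction is essentially the paper's own proof: apply Theorem~\ref{Thm:Jordan like} (noting $\mathtt{m}_a=\mathtt{m}_g$ forces $J_1=\lambda_{1s}I_{n_1}$), then bring $H_1$ to Jordan form via Theorem~\ref{Thm:Jordan_Q} and absorb the transformation into $\hat{P}$ as a block-diagonal similarity $\diag(Q_s,I)$, using that the real scalar $\lambda_{1s}$ commutes. Your additional blockwise eigenvector analysis correctly establishes the multiplicity identity $\mathtt{m}(\lambda_{1s}+\lambda_{1d}\epsilon,\hat{A})=\mathtt{m}(\lambda_{1d},J_{1d})$, a claim the paper states but leaves unproved in its own argument.
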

\begin{proof}
    By Theorem \ref{Thm:Jordan like}, there exists a DQM $\hat{S}$ such that $\hat{S}^{-1}\hat{A}\hat{S}=$diag$(\lambda_{1s}I_{n_1}+H_1\epsilon,J_2+H_2\epsilon,\dots,J_m+H_m\epsilon)$ in the form of  \eqref{eq:diag}. By Theorem~ \ref{Thm:Jordan_Q}, we can choose $T\in\mathbb{Q}^{n_1\times n_1}$ such that $T^{-1}H_1T$ takes the Jordan canonical form $J_{1d}$. Let $\hat{P}=\hat{S}$diag$(T,I_{n-n_1})$, then $\hat{P}^{-1}\hat{A}\hat{P}$ takes the form of \eqref{eq:diag:mis}.
\end{proof}

Based on this lemma and Theorem~\ref{Thm:Jordan like}, we derive the Jordan-like form under Assumption~\ref{assump2}.
\begin{corollary}[Jordan-like form under Assumption~\ref{assump2}]\label{Cor:Jordan_2}
     Suppose $\hat{A}\in\mathbb{DQ}^{n\times n}$ satisfies Assumption~\ref{assump2}.  Then there exists a DQM $\hat{P} = P_s + P_d\epsilon \in \mathbb{DQ}^{n \times n}$ and matrices $H_i \in \mathbb{Q}^{n_i \times n_i}$ such that $\hat{P}^{-1}\hat{A}\hat{P}$ takes the following block diagonal Jordan-like form: 
    \begin{equation}\label{eq:diag_2}
		\hat{P}^{-1}\hat{A}\hat{P}=
		\begin{pmatrix}
			\hat{\lambda}_{1}I_{n_1}&&&\\
			&J_2+H_2\epsilon&&\\
			&&\ddots&\\
			&&&J_m+H_m\epsilon
		\end{pmatrix}.
	\end{equation}
\end{corollary}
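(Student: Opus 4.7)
The plan is to treat the two cases of Assumption~\ref{assump2} separately, each time refining the Jordan-like form already provided by Theorem~\ref{Thm:Jordan like} or Lemma~\ref{Lem:Jordan:mis}. First I would observe that Assumption~\ref{assump2} implies Assumption~\ref{assump}: the magnitude condition is identical, and since $\mathtt{m}(\hat{\lambda}_1,\hat{A})\le \mathtt{m}_g(\lambda_{1s},A_s)\le \mathtt{m}_a(\lambda_{1s},A_s)$, the equality hypothesised in case (i) forces $\mathtt{m}_g(\lambda_{1s},A_s)=\mathtt{m}_a(\lambda_{1s},A_s)$, while case (ii) states this explicitly. Hence Theorem~\ref{Thm:Jordan like} is always applicable, and Lemma~\ref{Lem:Jordan:mis} is applicable whenever $\lambda_{1s}\in\mathbb{R}$.

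For case (i), I invoke Lemma~\ref{Lem:Jordan:mis} to obtain $\hat{P}$ whose first block has the form $\lambda_{1s}I_{n_1}+J_{1d}\epsilon$ for some Jordan matrix $J_{1d}\in\mathbb{Q}^{n_1\times n_1}$. The last assertion of Lemma~\ref{Lem:Jordan:mis} combined with the hypothesis gives
\[
\mathtt{m}(\lambda_{1d},J_{1d})=\mathtt{m}(\hat{\lambda}_1,\hat{A})=\mathtt{m}_a(\lambda_{1s},A_s)=n_1.
\]
Since $J_{1d}$ is $n_1\times n_1$ and already in Jordan canonical form, having its geometric multiplicity saturate the dimension forces every Jordan block to have size $1$ and standard eigenvalue $\lambda_{1d}$, so $J_{1d}=\lambda_{1d}I_{n_1}$. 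The first block then becomes $(\lambda_{1s}+\lambda_{1d}\epsilon)I_{n_1}=\hat{\lambda}_1 I_{n_1}$.

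For case (ii) we have $n_1=1$, and Theorem~\ref{Thm:Jordan like} produces a $\hat{P}_0$ whose first diagonal block is a scalar $\lambda_{1s}+h_1\epsilon\in\mathbb{DQ}$. I would argue that this scalar must lie in $[\hat{\lambda}_1]_D$. Because $\lambda_{1s},\ldots,\lambda_{ms}$ are distinct standard eigenvalues, the equivalence classes $[\lambda_{1s}],\ldots,[\lambda_{ms}]\subset\mathbb{Q}$ are disjoint, and the standard part of any eigenvalue of the block $J_i+H_i\epsilon$ ($i\ge 2$) lies in $[\lambda_{is}]$; consequently no block beyond the first can produce an eigenvalue of $\hat{A}$ with standard part in $[\lambda_{1s}]$. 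Since the eigenvalues of a $1\times 1$ dual quaternion block are precisely its similarity class, the set of eigenvalues of $\hat{A}$ with standard part in $[\lambda_{1s}]$ equals $[\lambda_{1s}+h_1\epsilon]_D$. In particular $\hat{\lambda}_1\in[\lambda_{1s}+h_1\epsilon]_D$, and by the dual quaternion similarity lemma recalled in Section~2 there exists a unit dual quaternion $\hat{u}$ with $\hat{u}^{*}(\lambda_{1s}+h_1\epsilon)\hat{u}=\hat{\lambda}_1$. Setting $\hat{P}=\hat{P}_0\,\diag(\hat{u},I_{n-1})$ is a block diagonal similarity that leaves every $J_i+H_i\epsilon$ unchanged and converts the first block into $\hat{\lambda}_1=\hat{\lambda}_1 I_{n_1}$, yielding \eqref{eq:diag_2}.

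The main obstacle I anticipate is the eigenvalue-separation step in case (ii): pinning down that eigenvalues of the block diagonal Jordan-like form partition cleanly along the blocks at the level of quaternion equivalence classes of their standard parts, and that a $1\times 1$ dual quaternion block contributes exactly the similarity class of its scalar entry. Once that separation is justified the construction of $\hat{u}$ is immediate, and the two cases together deliver the refined form \eqref{eq:diag_2}.
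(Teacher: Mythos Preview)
Your proposal is correct and follows exactly the route the paper intends: the corollary is stated without proof, preceded only by the sentence ``Based on this lemma and Theorem~\ref{Thm:Jordan like}, we derive the Jordan-like form under Assumption~\ref{assump2},'' and your two cases unpack precisely that derivation. Your case~(i) argument via Lemma~\ref{Lem:Jordan:mis} and the saturation $\mathtt{m}(\lambda_{1d},J_{1d})=n_1\Rightarrow J_{1d}=\lambda_{1d}I_{n_1}$ is the intended one, and your case~(ii) argument---including the eigenvalue-separation step and the unit dual quaternion conjugation $\hat{P}=\hat{P}_0\,\diag(\hat{u},I_{n-1})$---supplies details the paper leaves entirely to the reader.
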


Corollary~\ref{Cor:Jordan_2} plays a pivotal role in our convergence analysis, as it paves the way for a simplified expression of the matrix power $\hat{A}^k$.  Before presenting  the convergence analysis of Algorithm~\ref{assump}, we begin with the following lemma. 

\begin{lemma}
    Given two dual quaternion vectors $\hat{\vx}=\vx_s+\vx_d\epsilon\in\mathbb{DQ}^n$ and $\hat{\vy}=\vy_s+\vy_d\epsilon\in\mathbb{DQ}^n$. Suppose that both $\hat{\vx}$ and $\hat{\vx}+\hat{\vy}$ are appreciable, then
    \begin{equation}\label{eq:dual2}
        \Big|\textup{du}(\Vert \hat{\vx}+\hat{\vy}\Vert_2-\Vert\hat{\vx}\Vert_2) \Big|\leq \Vert\vy_d\Vert_2+\frac{2\Vert\vx_d\Vert_2}{\Vert\vx_s+\vy_s\Vert_2}\Vert\vy_s\Vert_2.
    \end{equation} 
\end{lemma}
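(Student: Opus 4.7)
The plan is to unpack both dual $2$-norms via their definition, rewrite the dual-part difference through a single add-and-subtract, and then bound the resulting pieces with the two tools already available in the preliminaries: the inequality $\vu^*\vv+\vv^*\vu\leq 2\Vert\vu\Vert_2\Vert\vv\Vert_2$ from \eqref{eq:dual} and the reverse triangle inequality on the standard part. Both uses of the norm formula are valid because appreciability of $\hat{\vx}$ and $\hat{\vx}+\hat{\vy}$ ensures $\vx_s\neq 0$ and $\vx_s+\vy_s\neq 0$, so neither denominator vanishes.

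First I would substitute $\textup{du}(\Vert\hat{\vx}\Vert_2)=(\vx_s^*\vx_d+\vx_d^*\vx_s)/(2\Vert\vx_s\Vert_2)$ and the analogous formula for $\hat{\vx}+\hat{\vy}$ into the left-hand side of \eqref{eq:dual2}, and then split the difference as $T_1+T_2$ by inserting $\pm(\vx_s^*\vx_d+\vx_d^*\vx_s)/(2\Vert\vx_s+\vy_s\Vert_2)$. After cancellation the numerator of $T_1$ reduces to
\begin{equation*}
N_1=(\vx_s+\vy_s)^*\vy_d+\vy_d^*(\vx_s+\vy_s)+\vy_s^*\vx_d+\vx_d^*\vy_s,
\end{equation*}
while $T_2=\tfrac{\vx_s^*\vx_d+\vx_d^*\vx_s}{2}\bigl(\tfrac{1}{\Vert\vx_s+\vy_s\Vert_2}-\tfrac{1}{\Vert\vx_s\Vert_2}\bigr)$ captures the contribution from the change of denominator.

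Next, applying \eqref{eq:dual} to each of the two symmetric brackets of $N_1$ gives $|N_1|\leq 2\Vert\vx_s+\vy_s\Vert_2\Vert\vy_d\Vert_2+2\Vert\vy_s\Vert_2\Vert\vx_d\Vert_2$, so dividing by $2\Vert\vx_s+\vy_s\Vert_2$ yields
\begin{equation*}
|T_1|\leq \Vert\vy_d\Vert_2+\frac{\Vert\vy_s\Vert_2\Vert\vx_d\Vert_2}{\Vert\vx_s+\vy_s\Vert_2}.
\end{equation*}
For $T_2$, \eqref{eq:dual} gives $|\vx_s^*\vx_d+\vx_d^*\vx_s|\leq 2\Vert\vx_s\Vert_2\Vert\vx_d\Vert_2$, and the reverse triangle inequality gives $\bigl|\Vert\vx_s+\vy_s\Vert_2^{-1}-\Vert\vx_s\Vert_2^{-1}\bigr|\leq \Vert\vy_s\Vert_2/(\Vert\vx_s\Vert_2\Vert\vx_s+\vy_s\Vert_2)$; the two $\Vert\vx_s\Vert_2$ factors cancel, leaving $|T_2|\leq \Vert\vx_d\Vert_2\Vert\vy_s\Vert_2/\Vert\vx_s+\vy_s\Vert_2$. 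Adding $|T_1|$ and $|T_2|$ then produces \eqref{eq:dual2}.

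The main obstacle is selecting the correct add-and-subtract. Combining instead over the denominator $\Vert\vx_s\Vert_2$, or splitting the numerator of $T_1$ differently, would leave a factor $\Vert\vx_s\Vert_2^{-1}$ on the right-hand side rather than the cleaner $\Vert\vx_s+\vy_s\Vert_2^{-1}$ that appears in \eqref{eq:dual2}. The specific grouping $(\vx_s+\vy_s)^*\vy_d+\vy_d^*(\vx_s+\vy_s)$ within $N_1$ is precisely what allows the factor $\Vert\vx_s+\vy_s\Vert_2$ to cancel cleanly and produce the coefficient $1$ in front of $\Vert\vy_d\Vert_2$; the coefficient $2$ in front of $\Vert\vx_d\Vert_2\Vert\vy_s\Vert_2/\Vert\vx_s+\vy_s\Vert_2$ then arises from summing the contribution inside $T_1$ (via the bracket $\vy_s^*\vx_d+\vx_d^*\vy_s$) with the contribution from $T_2$. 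Once this split is in place, the remainder is routine bookkeeping with the two standard inequalities.
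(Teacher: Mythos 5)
Your proposal is correct and follows essentially the same route as the paper: the same add-and-subtract decomposition into the term with numerator $(\vx_s+\vy_s)^*\vy_d+\vy_d^*(\vx_s+\vy_s)+\vy_s^*\vx_d+\vx_d^*\vy_s$ plus the term $(\vx_s^*\vx_d+\vx_d^*\vx_s)\bigl(\tfrac{1}{2\Vert\vx_s+\vy_s\Vert_2}-\tfrac{1}{2\Vert\vx_s\Vert_2}\bigr)$, each bounded via \eqref{eq:dual} and a triangle-type inequality on the standard parts. Your explicit use of the reverse triangle inequality for the denominator term is, if anything, a slightly cleaner handling of the sign than the paper's, but the argument is the same.
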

\begin{proof}
    Through direct computations, we obtain the equality:
    \begin{equation*}
		\begin{aligned}
                &\textup{du}(\Vert \hat{\vx}+\hat{\vy}\Vert_2-\Vert\hat{\vx}\Vert_2)\\
                =&
                \frac{(\vx_s+\vy_s)^*(\vx_d+\vy_d)+(\vx_d+\vy_d)^*(\vx_s+\vy_s)}{2\Vert\vx_s+\vy_s\Vert_2}-\frac{\vx_s^*\vx_d+\vx_d^*\vx_s}{2\Vert\vx_s\Vert_2}\\  =&\left(\vx_s^*\vx_d+\vx_d^*\vx_s\right)\left(\frac{1}{2\Vert\vx_s+\vy_s\Vert_2}-\frac{1}{2\Vert\vx_s\Vert_2}\right)	+
                \frac{(\vx_s+\vy_s)^*\vy_d+\vy_d^*(\vx_s+\vy_s)+\vy_s^*\vx_d+\vx_d^*\vy_s}{2\Vert\vx_s+\vy_s\Vert_2}. 
		\end{aligned}
	\end{equation*}
    This combined with \eqref{eq:dual} and 
    $\Vert\vx_s+\vy_s\Vert_2\le \Vert\vx_s\Vert_2+\Vert\vy_s\Vert_2$ yields the following estimate:
    \begin{equation*}
		\begin{aligned}
                \Big|\textup{du}(\Vert \hat{\vx}+\hat{\vy}\Vert_2-\Vert\hat{\vx}\Vert_2)\Big|
                \leq& \Vert\vx_d\Vert_2\frac{\Vert\vx_s+\vy_s\Vert_2- \Vert\vx_s\Vert_2}{\Vert\vx_s+\vy_s\Vert_2}+\Vert\vy_d\Vert_2+\frac{\Vert\vx_d\Vert_2\Vert\vy_s\Vert_2}{\Vert\vx_s+\vy_s\Vert_2}
                \\
                \leq& \frac{\Vert\vx_d\Vert_2\Vert\vy_s\Vert_2}{\Vert\vx_s+\vy_s\Vert_2}+\Vert\vy_d\Vert_2+\frac{\Vert\vx_d\Vert_2\Vert\vy_s\Vert_2}{\Vert\vx_s+\vy_s\Vert_2}
                \\=&\Vert\vy_d\Vert_2+\frac{2\Vert\vx_d\Vert_2}{\Vert\vx_s+\vy_s\Vert_2}\Vert\vy_s\Vert_2.
		\end{aligned}
	\end{equation*}  
This completes the proof.
\end{proof}

Next we establish the following technical lemma.


\begin{lemma}\label{Lem:con_bd}
 Suppose that $\hat{A}\in\mathbb{DQ}^{n\times n}$  satisfies Assumption~\ref{assump2},  and let $\hat{P}$ defined in Corollary~\ref{Cor:Jordan_2} be partitioned as  $\hat{P}=(\hat{U}_1\ \hat{U}_2\ \cdots\ \hat{U}_m)$, where the columns of  $\hat{U}_1\in\mathbb{DQ}^{n\times n_1}$ are eigenvectors of $\hat{A}$ corresponding to the strictly dominant eigenvalue $\hat{\lambda}_1$,and $\hat{U}_i\in\mathbb{DQ}^{n\times n_i}$ for  $i = 2, \dots, m$.   
	Given an initial dual quaternion vector  $\hat{\vv}^{(0)}\in\mathbb{DQ}^{n\times 1}$ with unique decomposition: $\hat{\vv}^{(0)}=\sum_{j=1}^{m}\hat{U}_j\hat{\veta}_j$, where $\hat{\veta}_j\in\mathbb{DQ}^{n_j\times 1}$ for $j=1,\dots,m$. Suppose that $\hat{\veta}_1\in\mathbb{DQ}^{n_1}$ is appreciable. 
   Then for any $j=2,\dots,m$, the following estimates hold when $k\geq 2n_{j1}$: 
	\begin{equation}\label{eq:proof5}
		\begin{cases}
			\Vert\text{st}(\hat{U}_j(J_j+H_j\epsilon)^k\hat{\veta}_j)\Vert_2=O(k^{n_{j1}-1}|\lambda_{js}|^k)=\tilde{O}(|\lambda_{js}|^k),\\
			\Vert\text{du}(\hat{U}_j(J_j+H_j\epsilon)^k\hat{\veta}_j)\Vert_2=O(k(\frac{k-1}{2})^{2(n_{j1}-1)}|\lambda_{js}|^k)=\tilde{O}(|\lambda_{js}|^k). 
		\end{cases}
	\end{equation}
	Consequently, it holds that $\|\hat{U}_j(J_j+H_j\epsilon)^k\hat{\veta}_j\|_2=\tilde{O}_D(|\lambda_{js}|^k)$ and  $\Vert \hat{U}_j(J_j+H_j\epsilon)^k\hat{\veta}_j\Vert_{2^R}=\tilde{O}(|\lambda_{js}|^k)$ for $j=2,\dots,m$.  The notations $\tilde{O}(\cdot),\tilde{O}_D(\cdot)$ are defined by Definition \ref{def:O}.
\end{lemma}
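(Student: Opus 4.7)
The plan is to expand $(J_j+H_j\epsilon)^k$ using the fact that $\epsilon^2=0$ and $\epsilon$ commutes with all quaternions, obtaining
\begin{equation*}
(J_j+H_j\epsilon)^k=J_j^k+\epsilon\sum_{i=0}^{k-1}J_j^iH_jJ_j^{k-1-i}.
\end{equation*}
Writing $\hat{U}_j=U_{js}+U_{jd}\epsilon$ and $\hat{\veta}_j=\veta_{js}+\veta_{jd}\epsilon$, the standard part of $\hat{U}_j(J_j+H_j\epsilon)^k\hat{\veta}_j$ is $U_{js}J_j^k\veta_{js}$, and the dual part is the sum of $U_{js}J_j^k\veta_{jd}$, $U_{jd}J_j^k\veta_{js}$, and $U_{js}\bigl(\sum_{i=0}^{k-1}J_j^iH_jJ_j^{k-1-i}\bigr)\veta_{js}$. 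Hence the proof reduces to bounding the operator norms of $J_j^k$ and of the matrix sum appearing in the dual contribution.

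Next, I would invoke the classical closed form for Jordan block powers: on each block $J_p(\lambda_{js})$ comprising $J_j$ (with $p\leq n_{j1}$, where $n_{j1}$ denotes the size of the largest Jordan block associated with $\lambda_{js}$), entries of $J_p(\lambda_{js})^k$ are $\binom{k}{t-s}\lambda_{js}^{k-(t-s)}$ for $t\geq s$. This identity remains valid over the quaternions because every scalar involved is a copy of $\lambda_{js}\in\mathbb{C}$ and thus commutes. For $k\geq n_{j1}-1$ this yields $\|J_j^k\|_2\leq C_1 k^{n_{j1}-1}|\lambda_{js}|^k$, so the standard-part bound $\|\text{st}(\hat{U}_j(J_j+H_j\epsilon)^k\hat{\veta}_j)\|_2\leq\|U_{js}\|_2\|J_j^k\|_2\|\veta_{js}\|_2=O(k^{n_{j1}-1}|\lambda_{js}|^k)$ follows at once. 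For the dual contribution, submultiplicativity gives
\begin{equation*}
\Big\|\sum_{i=0}^{k-1}J_j^iH_jJ_j^{k-1-i}\Big\|_2\leq C_2\|H_j\|_2\,|\lambda_{js}|^{k-1}\sum_{i=0}^{k-1}i^{n_{j1}-1}(k-1-i)^{n_{j1}-1}.
\end{equation*}
Since $i\mapsto i(k-1-i)$ is maximized at $i=(k-1)/2$, each summand is at most $\bigl(\tfrac{k-1}{2}\bigr)^{2(n_{j1}-1)}$, and multiplying by $k$ indices yields $O\bigl(k\bigl(\tfrac{k-1}{2}\bigr)^{2(n_{j1}-1)}|\lambda_{js}|^k\bigr)$. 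The hypothesis $k\geq 2n_{j1}$ is used to guarantee that on the central range $n_{j1}-1\leq i\leq k-n_{j1}$ the polynomial growth bound applies simultaneously to both $J_j^i$ and $J_j^{k-1-i}$; the $O(n_{j1})$ boundary terms are absorbed into $C_2$. Combined with the two easier contributions $U_{js}J_j^k\veta_{jd}$ and $U_{jd}J_j^k\veta_{js}$, which are $O(k^{n_{j1}-1}|\lambda_{js}|^k)$ by the same $J_j^k$ estimate, this establishes the stated dual bound.

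Finally, since both bounds are polynomial multiples of $|\lambda_{js}|^k$, they are $\tilde O(|\lambda_{js}|^k)$ in the sense of Definition~\ref{def:O}. Writing $\hat{\vz}=\hat{U}_j(J_j+H_j\epsilon)^k\hat{\veta}_j=\vz_s+\vz_d\epsilon$, one has $\|\hat{\vz}\|_2=\|\vz_s\|_2+\tfrac{\vz_s^*\vz_d+\vz_d^*\vz_s}{2\|\vz_s\|_2}\epsilon$ when $\vz_s\neq 0$ and $\|\hat{\vz}\|_2=\|\vz_d\|_2\epsilon$ otherwise; applying \eqref{eq:dual} to the dual component then shows $\|\hat{\vz}\|_2=\tilde O_D(|\lambda_{js}|^k)$, and the $2^R$-norm bound $\tilde O(|\lambda_{js}|^k)$ is immediate from $\|\hat{\vz}\|_{2^R}=\sqrt{\|\vz_s\|_2^2+\|\vz_d\|_2^2}$. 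The main technical obstacle is not any single estimate but rather the careful bookkeeping for the non-commutative sum $\sum_i J_j^iH_jJ_j^{k-1-i}$ together with verifying that the Jordan block polynomial growth bound holds uniformly across all subblocks of $J_j$; once the binomial structure on each subblock is exploited, the remaining work is arithmetic.
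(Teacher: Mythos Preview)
Your proposal is correct and follows essentially the same approach as the paper's proof: expand $(J_j+H_j\epsilon)^k$ via $\epsilon^2=0$, bound $\|J_j^k\|_2$ through the binomial structure of Jordan-block powers, and control the dual sum using $i(k-1-i)\le\bigl(\tfrac{k-1}{2}\bigr)^2$. The paper differs only in organization: it treats the case $\lambda_{js}=0$ separately (where $J_j^{n_{j1}}=0$ forces both parts to vanish identically once $k\ge 2n_{j1}$, which is the paper's stated reason for that hypothesis) and isolates the boundary terms $i=0,\,k-1$ explicitly rather than absorbing them into a constant.
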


\begin{proof} 
    We analyze two cases based on the value of $\lambda_{js}$: $\lambda_{js}\neq 0$ and $\lambda_{js}=0$.   
    
    {\bf Case (i) $\lambda_{js}\neq 0$.}  
    Let $n_{j1}$ denote the maximum size among all Jordan blocks associated with the eigenvalue  $\lambda_{js}$.
    It follows from the Jordan block structure that 
	\begin{equation}\label{eq:proof0.5}
		\begin{aligned}
			\Vert J_j^k\Vert_2\leq &\sqrt{\Vert J_j^k\Vert_1\Vert J_j^k\Vert_{\infty}}=\Vert J_j^k\Vert_{\infty}=\sum_{i=0}^{\min\{n_{j1}-1,k\}}
			\begin{pmatrix}
				k\\
				i
			\end{pmatrix}
			|\lambda_{js}|^{k-i}\\
			\leq& k^{n_{j1}-1}\sum_{i=0}^{\min\{n_{j1}-1,k\}}|\lambda_{js}|^{k-i}\leq c_jk^{n_{j1}-1}|\lambda_{js}|^k,
		\end{aligned}
	\end{equation}
	where 
	\begin{equation*}
		c_j=
		\begin{dcases}
			n_{j1},&\text{ if } |\lambda_{js}|=1,\\
			\frac{1-|\lambda_{js}|^{1-n_{j1}}}{1-|\lambda_{js}|^{-1}}, &\text{ if } |\lambda_{js}|\neq 1\text{ and }|\lambda_{js}|\neq 0.
		\end{dcases}
	\end{equation*}
     For any $k>0$, it holds that $(J_j+H_j\epsilon)^k=J_j^k+\sum_{i=0}^{k-1}J_j^iH_jJ_j^{k-1-i}\epsilon.$ 
	Thus the standard part satisfies 
	\begin{equation}\label{eq:proof1}
		\begin{aligned}
			&\Vert \text{st}(\hat{U}_j(J_j+H_j\epsilon)^k\hat{\veta}_j)\Vert_2=\Vert U_{js}J_j^k\veta_{js}\Vert_2\\
			\leq &\Vert U_{js}\Vert_2 \Vert J_j^k\Vert_2\Vert\veta_{js}\Vert_2\leq c_jk^{n_{j1}-1}\Vert U_{js}\Vert_2\Vert\veta_{js}\Vert_2|\lambda_{js}|^k. 
		\end{aligned}
	\end{equation}
	Similarly,  when $k\geq 2$, we may derive the following results for the dual part, 
	\begin{equation*}  
		\begin{split}
			&\Vert\text{du}(\hat{U}_j(J_j+H_j\epsilon)^k\hat{\veta}_j)\Vert_2\\ 
			=& \Vert U_{jd}J_j^k\veta_{js}+U_{js}J_j^k\veta_{jd}+U_{js}\sum_{i=0}^{k-1}J_j^iH_jJ_j^{k-1-i}\veta_{js}\Vert_2\\
			\leq&
			\Vert U_{jd}J_j^k\veta_{js}\Vert_2+\Vert U_{js}J_j^k\veta_{jd}\Vert_2+\Vert U_{js}\sum_{i=0}^{k-1}J_j^iH_jJ_j^{k-1-i}\veta_{js}\Vert_2\\
			\overset{\eqref{eq:proof0.5}}\leq&
			c_jk^{n_{j1}-1}\left(\Vert U_{jd}\Vert_2\Vert\veta_{js}\Vert_2+\Vert U_{js}\Vert_2\Vert\veta_{jd}\Vert_2\right)|\lambda_{js}|^k+\Vert U_{js}\Vert_2\Vert H_j\Vert_2\Vert\veta_{js}\Vert_2\sum_{i=0}^{k-1}\Vert J_j^i\Vert_2\Vert J_j^{k-1-i}\Vert_2.  
		\end{split}
        \raisetag{4.7ex}
	\end{equation*}
    Let $D_1=\Vert U_{jd}\Vert_2\Vert\veta_{js}\Vert_2+\Vert U_{js}\Vert_2\Vert\veta_{jd}\Vert_2, D_2=\Vert U_{js}\Vert_2\Vert H_j\Vert_2\Vert\veta_{js}\Vert_2$. Then we can derive that 
    	\begin{equation}\label{eq:proof2}
		\begin{split}
			&\Vert\text{du}(\hat{U}_j(J_j+H_j\epsilon)^k\hat{\veta}_j)\Vert_2\\  \le&c_jk^{n_{j1}-1}D_1|\lambda_{js}|^k+D_2\Big(2\Vert J_j^{k-1}\Vert_2+\sum_{i=1}^{k-2}\Vert J_j^i\Vert_2\Vert J_j^{k-1-i}\Vert_2\Big)\\
			\overset{\eqref{eq:proof0.5}}{\leq}&c_jk^{n_{j1}-1}D_1|\lambda_{js}|^k+D_2\Big(2c_j(k-1)^{n_{j1}-1}|\lambda_{js}|^{k-1}+\sum_{i=1}^{k-2}c_j^2\big(i(k-1-i)\big)^{n_{j1}-1}|\lambda_{js}|^{k-1}\Big)\\
            \leq&c_jk^{n_{j1}-1}D_1|\lambda_{js}|^k+\Big(2c_j(k-1)^{n_{j1}-1}+c_j^2(k-2)\left(\frac{k-1}{2}\right)^{2(n_{j1}-1)}\Big)D_2|\lambda_{js}|^{k-1}.
		\end{split}
        \raisetag{4.7ex}
	\end{equation}
It follows from equations \eqref{eq:proof1}--\eqref{eq:proof2} that equation \eqref{eq:proof5} holds for any $\lambda_{js}\neq 0$.

	{\bf Case (ii) $\lambda_{js}=0$.}  Since $J_j^{n_{j1}}=0$, for $k\geq 2n_{j1}$, we have 
	\begin{equation*}  
		\Vert\text{st}(\hat{U}_j(J_j+H_j\epsilon)^k\hat{\veta}_j)\Vert_2=\Vert U_{js}J_j^k\veta_{js})\Vert_2=0,
	\end{equation*}
	and 
	\begin{equation*}
		\begin{aligned}
			\Vert\text{du}(\hat{U}_j(J_j+H_j\epsilon)^k\hat{\veta}_j)\Vert_2=\Vert(U_{jd}J_j^k\veta_{js}+U_{js}J_j^k\veta_{jd}+U_{js}\sum_{i=0}^{k-1}J_j^iH_jJ_j^{k-1-i}\veta_{js})\Vert_2=
			0.
		\end{aligned}
	\end{equation*}
	Combining these two cases, we conclude that \eqref{eq:proof5} holds for all $j=2,\dots,m$.
    This completes the proof.
\end{proof}

We are now  ready to show that the sequence  $\hat{\vv}^{(k)}$ generated by Algorithm~\ref{alg_PM} converges to the eigenvector corresponding to the strict dominant eigenvalue linearly. 
 
\begin{theorem}\label{Thm:converge}
	 Under the same assumptions as in Lemma~\ref{Lem:con_bd}, the sequence $\hat{\vv}^{(k)}$ and $\hat{\lambda}^{(k)}$ generated by Algorithm~\ref{alg_PM} satisfy
	\begin{equation*} 
        \hat{\vv}^{(k)}
        =\frac{\hat{U}_1\hat{\lambda}_1^k\hat{\veta}_1}{\Vert \hat{U}_1\hat{\lambda}_1^k\hat{\veta}_1\Vert_2}\left(1+\tilde{O}_D\left(\left|\frac{\lambda_{2s}}{\lambda_{1s}}\right|^k\right)\right),
	\end{equation*} 
	and
	\begin{equation*}
		\hat{\lambda}^{(k)}=
        \begin{cases}
            \hat{\lambda}_1(1+\tilde{O}_D\left(\left|\frac{\lambda_{2s}}{\lambda_{1s}}\right|^k\right),&\text{if $\hat{A}$ satisfies Assumption~\ref{assump2}~(i),}       \\
            \frac{\hat{\veta}_1^*\hat{\lambda}_1\hat{\veta}_1}{\hat{\veta}_1^*\hat{\veta}_1}\left(1+\tilde{O}_D\left(\left|\frac{\lambda_{2s}}{\lambda_{1s}}\right|^k\right)\right),&\text{if $\hat{A}$ satisfies Assumption~\ref{assump2}~(ii).}
        \end{cases}
	\end{equation*} 
\end{theorem}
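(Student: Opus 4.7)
The plan is to leverage the Jordan-like decomposition of Corollary~\ref{Cor:Jordan_2} to obtain a closed-form expression for $\hat{A}^k\hat{\vv}^{(0)}$ and then isolate the dominant term from a subdominant remainder controlled by Lemma~\ref{Lem:con_bd}. First I would observe that at every iteration of Algorithm~\ref{alg_PM} the update $\hat{\vv}^{(k)}=\hat{\vy}^{(k)}/\|\hat{\vy}^{(k)}\|_2$ rescales by a positive dual scalar, so by induction $\hat{\vv}^{(k)}=\hat{A}^k\hat{\vv}^{(0)}/\|\hat{A}^k\hat{\vv}^{(0)}\|_2$ and a direct computation shows $\|\hat{\vv}^{(k)}\|_2=1$. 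Writing $\hat{\vv}^{(0)} = \sum_{j=1}^m \hat{U}_j \hat{\veta}_j$ and applying Corollary~\ref{Cor:Jordan_2} gives
\begin{equation*}
\hat{A}^k\hat{\vv}^{(0)} = \hat{U}_1 \hat{\lambda}_1^k \hat{\veta}_1 + \sum_{j=2}^m \hat{U}_j(J_j+H_j\epsilon)^k \hat{\veta}_j =: \hat{U}_1 \hat{\lambda}_1^k \hat{\veta}_1 + \hat{R}_k,
\end{equation*}
with $\|\hat{R}_k\|_{2^R}=\tilde{O}(|\lambda_{2s}|^k)$ by Lemma~\ref{Lem:con_bd}. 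Since $\hat{\veta}_1$ is appreciable and the columns of $U_{1s}$ are right linearly independent, $\|\hat{U}_1\hat{\lambda}_1^k\hat{\veta}_1\|_2$ grows like $|\lambda_{1s}|^k$ in its standard part, so the tail-to-signal ratio is $\tilde{O}_D(|\lambda_{2s}/\lambda_{1s}|^k)$.

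Next, I would write $\hat{\vv}^{(k)} = (\hat{U}_1\hat{\lambda}_1^k\hat{\veta}_1 + \hat{R}_k)/\|\hat{U}_1\hat{\lambda}_1^k\hat{\veta}_1 + \hat{R}_k\|_2$, factor out the signal vector $\hat{U}_1\hat{\lambda}_1^k\hat{\veta}_1/\|\hat{U}_1\hat{\lambda}_1^k\hat{\veta}_1\|_2$, and expand the reciprocal of the denominator via a first-order dual-number perturbation. Controlling the dual part of the denominator with inequality~\eqref{eq:dual2} ensures that both the standard and dual parts of the perturbation are of order $\tilde{O}(|\lambda_{2s}/\lambda_{1s}|^k)$, which then consolidates into the claimed multiplicative factor $1+\tilde{O}_D(|\lambda_{2s}/\lambda_{1s}|^k)$.

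For the Rayleigh quotient $\hat{\lambda}^{(k)}=(\hat{\vv}^{(k)})^*\hat{A}\hat{\vv}^{(k)}$, I would substitute the same decomposition: $\hat{A}\hat{\vv}^{(k)}$ equals $\hat{U}_1\hat{\lambda}_1^{k+1}\hat{\veta}_1/\|\hat{U}_1\hat{\lambda}_1^k\hat{\veta}_1\|_2$ plus a $\tilde{O}_D(|\lambda_{2s}/\lambda_{1s}|^k)$ perturbation, and use $\|\hat{\vv}^{(k)}\|_2=1$ to reach
\begin{equation*}
\hat{\lambda}^{(k)} = \frac{\hat{\veta}_1^*\hat{\lambda}_1^{*k}(\hat{U}_1^*\hat{U}_1)\hat{\lambda}_1^{k+1}\hat{\veta}_1}{\hat{\veta}_1^*\hat{\lambda}_1^{*k}(\hat{U}_1^*\hat{U}_1)\hat{\lambda}_1^{k}\hat{\veta}_1}\bigl(1+\tilde{O}_D(|\lambda_{2s}/\lambda_{1s}|^k)\bigr).
\end{equation*}
Under Assumption~\ref{assump2}(i) the dual real $\hat{\lambda}_1$ commutes with all quaternion factors, so the prefactor collapses to $\hat{\lambda}_1$. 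Under Assumption~\ref{assump2}(ii) we have $n_1=1$, so $\hat{U}_1^*\hat{U}_1$ is a positive dual real scalar that cancels; and since $\hat{\lambda}_1,\hat{\lambda}_1^*\in\mathbb{DC}$ commute, $\hat{\lambda}_1^{*k}\hat{\lambda}_1^{k+1} = |\hat{\lambda}_1|^{2k}\hat{\lambda}_1$ with $|\hat{\lambda}_1|^{2k}\in\mathbb{DR}$ cancels between numerator and denominator, leaving $\hat{\veta}_1^*\hat{\lambda}_1\hat{\veta}_1/(\hat{\veta}_1^*\hat{\veta}_1)$.

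The main obstacle is handling non-commutativity in case~(ii): neither $\hat{\lambda}_1$ nor $\hat{\veta}_1$ is a scalar that commutes with quaternions, so every cancellation must appeal to the facts that dual complex numbers commute among themselves and that $\hat{U}_1^*\hat{U}_1$ is dual real. A secondary technical point is keeping the dual part of the normalizer controlled across all powers of $k$; here the polynomial-in-$k$ bounds from Lemma~\ref{Lem:con_bd}, combined with inequality~\eqref{eq:dual2}, guarantee that the dual-part error is still absorbed into the $\tilde{O}_D$ factor despite the polynomial prefactors present in the Jordan-block powers.
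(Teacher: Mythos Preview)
Your proposal is correct and follows essentially the same route as the paper: express $\hat{\vv}^{(k)}=\hat{A}^k\hat{\vv}^{(0)}/\|\hat{A}^k\hat{\vv}^{(0)}\|_2$, split $\hat{A}^k\hat{\vv}^{(0)}$ via Corollary~\ref{Cor:Jordan_2} into the dominant term $\hat{U}_1\hat{\lambda}_1^k\hat{\veta}_1$ plus a remainder controlled by Lemma~\ref{Lem:con_bd}, use inequality~\eqref{eq:dual2} to handle the dual part of the normalizer, and then simplify the Rayleigh quotient in the two cases exactly as the paper does. Your treatment of case~(ii) (noting that $\hat{U}_1^*\hat{U}_1\in\mathbb{DR}$ and $(\hat{\lambda}_1^*)^k\hat{\lambda}_1^k=|\hat{\lambda}_1|^{2k}\in\mathbb{DR}$ both commute and cancel) is precisely the mechanism behind the paper's one-line simplification.
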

\begin{proof}
	By direct calculation, we obtain that
    \begin{equation*}
    \begin{aligned}
        \hat{A}^k\hat{\vv}^{(0)}& 
        =\hat{A}^k\hat{P}\begin{pmatrix}
            \hat{\veta}_1\\
            \hat{\veta}_2\\
            \vdots\\
            \hat{\veta}_m
        \end{pmatrix}=\hat{P}\begin{pmatrix}
            \hat{\lambda}_1^kI_{n_1}\\
            &(J_2+H_2\epsilon)^k\\
            &&\ddots\\
            &&&(J_m+H_m\epsilon)^k
        \end{pmatrix}\begin{pmatrix}
            \hat{\veta}_1\\
            \hat{\veta}_2\\
            \vdots\\
            \hat{\veta}_m
        \end{pmatrix}\\
        &=
        \begin{pmatrix}
            \hat{U}_1&\hat{U}_2&\cdots \hat{U}_m
        \end{pmatrix}\begin{pmatrix}
            \hat{\lambda}_1^kI_{n_1}\\
            &(J_2+H_2\epsilon)^k\\
            &&\ddots\\
            &&&(J_m+H_m\epsilon)^k
        \end{pmatrix}\begin{pmatrix}
            \hat{\veta}_1\\
            \hat{\veta}_2\\
            \vdots\\
            \hat{\veta}_m
        \end{pmatrix}\\
        &=\hat{U}_1\hat{\lambda}_1^k\hat{\veta}_1+\sum_{j=2}^m\hat{U}_j(J_j+H_j\epsilon)^k\hat{\veta}_j.
    \end{aligned}
    \end{equation*}
    Thus $\hat{\vv}^k$ can be expressed as  
	\begin{equation}\label{eq:proof0}
		\hat{\vv}^{(k)}=\frac{\hat{A}^k\hat{\vv}^{(0)}}{\left\Vert \hat{A}^k\hat{\vv}^{(0)}\right\Vert_2}=\frac{\hat{U}_1\hat{\lambda}_1^k\hat{\veta}_1+\sum_{j=2}^m\hat{U}_j(J_j+H_j\epsilon)^k\hat{\veta}_j}{\left\Vert \hat{U}_1\hat{\lambda}_1^k\hat{\veta}_1+\sum_{j=2}^m\hat{U}_j(J_j+H_j\epsilon)^k\hat{\veta}_j\right\Vert_2},
	\end{equation}
	  Next, we analyze the asymptotic behavior of each term of $\hat{\vv}^{(k)}$ in \eqref{eq:proof0}. By direct computation, we have
    \begin{equation}\label{eq:proof6}
       \left\Vert\text{st}(\hat{U}_1\hat{\lambda}_1^k\hat{\veta}_1)\right\Vert_2=O(|\lambda_{1s}|^k) \  \text{and}  \ 
			\left\Vert\text{du}(\hat{U}_1\hat{\lambda}_1^k\hat{\veta}_1)\right\Vert_2=O(k|\lambda_{1s}|^k)=\tilde{O}(|\lambda_{1s}|^k). 
    \end{equation}
    Combining the triangle inequality with \eqref{eq:dual2},\eqref{eq:proof5} and \eqref{eq:proof6}, we obtain the estimate for the denominator in \eqref{eq:proof0}:
	\begin{equation*}
		\left\Vert \hat{U}_1\hat{\lambda}_1^k\hat{\veta}_1+\sum_{j=2}^m\hat{U}_j(J_j+H_j\epsilon)^k\hat{\veta}_j\right\Vert_2=\left\Vert \hat{U}_1\hat{\lambda}_1^k\hat{\veta}_1\right\Vert_2+\tilde{O}_D(|\lambda_{2s}|^k).
	\end{equation*}
	Consequently, it holds that
	\begin{equation}\label{eq:proof7}
		\begin{aligned}
			\frac{\hat{\lambda}_1^k\hat{\veta}_1}{\Vert \hat{U}_1\hat{\lambda}_1^k\hat{\veta}_1+\sum_{j=2}^m\hat{U}_j(J_j+H_j\epsilon)^k\hat{\veta}_j\Vert_2}&=\frac{\hat{\lambda}_1^k\hat{\veta}_1}{\Vert \hat{U}_1\hat{\lambda}_1^k\hat{\veta}_1\Vert_2+\tilde{O}_D(|\lambda_{2s}|^k)}\\
			&=\frac{\hat{\lambda}_1^k\hat{\veta}_1}{\Vert \hat{U}_1\hat{\lambda}_1^k\hat{\veta}_1\Vert_2}+\tilde{O}_D\left(\left|\frac{\lambda_{2s}}{\lambda_{1s}}\right|^k\right),
		\end{aligned}
	\end{equation}
	and 
	\begin{equation}\label{eq:proof8}
		\begin{aligned}
			\frac{\Vert \hat{U}_i(J_i+H_i\epsilon)^k\hat{\veta}_i\Vert_{2^R}}{\Vert \hat{U}_1\hat{\lambda}_1^k\hat{\veta}_1+\sum_{j=2}^m\hat{U}_j(J_j+H_j\epsilon)^k\hat{\veta}_j\Vert_2}&=\frac{\tilde{O}(|\lambda_{2s}|^k)}{\Vert \hat{U}_1\hat{\lambda}_1^k\hat{\alpha}_1\Vert_2+\tilde{O}_D(|\lambda_{2s}|^k)}\\
			&=\tilde{O}_D\left(\left|\frac{\lambda_{2s}}{\lambda_{1s}}\right|^k\right),
		\end{aligned}
	\end{equation}
    for $i=2\dots,m$. 
	By substituting \eqref{eq:proof7} and \eqref{eq:proof8} into \eqref{eq:proof0}, we conclude:
	\begin{equation*}
		\hat{\vv}^{(k)}=\frac{\hat{U}_1\hat{\lambda}_1^k\hat{\veta}_1+\sum_{j=2}^m\hat{U}_j(J_j+H_j\epsilon)^k\hat{\veta}_j}{\Vert \hat{U}_1\hat{\lambda}_1^k\hat{\veta}_1+\sum_{j=2}^m\hat{U}_j(J_j+H_j\epsilon)^k\hat{\veta}_j\Vert_2}=\frac{\hat{U}_1\hat{\lambda}_1^k\hat{\veta}_1}{\Vert \hat{U}_1\hat{\lambda}_1^k\hat{\veta}_1\Vert_2}\left(1+\tilde{O}_D\left(\left|\frac{\lambda_{2s}}{\lambda_{1s}}\right|^k\right)\right),
	\end{equation*}
	\begin{equation*}
		\vy^{(k+1)}=\hat{A}\hat{\vv}^{(k)}=\frac{\hat{U}_1\hat{\lambda}_1^{k+1}\hat{\veta}_1}{\Vert \hat{U}_1\hat{\lambda}_1^k\hat{\veta}_1\Vert_2}\left(1+\tilde{O}_D\left(\left|\frac{\lambda_{2s}}{\lambda_{1s}}\right|^k\right)\right).
	\end{equation*} 
    For the eigenvalue approximation $\hat{\lambda}^{(k)}$ generated by Algorithm~\ref{alg_PM}, direct computation yields that
	\begin{equation*}
    \begin{aligned}
        \hat{\lambda}^{(k)}=(\hat{\vv}^{(k)})^*\hat{\vy}^{(k+1)}
        &=
        \frac{\hat{\veta}_1^*(\hat{\lambda}_1^*)^k\hat{U}_1^*\hat{U}_1\hat{\lambda}_1^{k+1}\hat{\veta}_1}{\hat{\veta}_1^*(\hat{\lambda}_1^*)^k\hat{U}_1^*\hat{U}_1\hat{\lambda}_1^{k}\hat{\veta}_1}\left(1+\tilde{O}_D\left(\left|\frac{\lambda_{2s}}{\lambda_{1s}}\right|^k\right)\right)\\
        &=
        \begin{cases}
            \hat{\lambda}_1\left(1+\tilde{O}_D\left(\left|\frac{\lambda_{2s}}{\lambda_{1s}}\right|^k\right)\right),&\text{if $\hat{A}$ satisfies Assumption~\ref{assump2}~(i),}       \\
            \frac{\hat{\veta}_1^*\hat{\lambda}_1\hat{\veta}_1}{\hat{\veta}_1^*\hat{\veta}_1}\left(1+\tilde{O}_D\left(\left|\frac{\lambda_{2s}}{\lambda_{1s}}\right|^k\right)\right),&\text{if $\hat{A}$ satisfies Assumption~\ref{assump2}~(ii).}
        \end{cases}
    \end{aligned}
	\end{equation*} 
	This completes the proof.
\end{proof}
 
        
As established in Theorem~\ref{Thm:converge}, the sequence generated by the power method converges linearly to the strictly dominant eigenvalue of a given non-Hermitian DQM and its associated eigenvector. The convergence rate of the eigenvalue is slower than that of the power method for Hermitian DQMs, whereas the latter exhibits a convergence rate of $\tilde{O}_D\left(\left|\frac{\lambda_{2s}}{\lambda_{1s}}\right|^{2k}\right)$.

From \eqref{eq:proof5} and \eqref{eq:proof8}, we can infer that if the standard part of the matrix contains large-order Jordan blocks, then the power method may have relatively large fluctuations before converging. This theoretical prediction is subsequently verified through numerical experiments.
 
\subsection{Assumption~\ref{assump2} is also necessary for the convergence}\label{Sec:necessary}


In this subsection, we establish that Assumption~\ref{assump2} is not only sufficient but also necessary for the convergence of the power method. Our analysis precedes an examination of five specific cases where the assumption is violated. 

\begin{lemma} \label{Lem:5cases}
Let $\hat{A}=A_s+A_d\epsilon\in\mathbb{DQ}^{n\times n}$ and let $\lambda_{1s},\dots,\lambda_{ms}\in\mathbb{C}$ 
be distinct 
standard eigenvalues of $A_s$, and $|\lambda_{1s}|\geq|\lambda_{2s}|\geq\cdots\geq|\lambda_{ms}|$. 
If Assumption~\ref{assump2} does not hold for $\hat{A}$, then it must fall into one of the following cases.
\begin{enumerate}[label=\textup{(\roman*)}]
    \item $|\lambda_{1s}|=|\lambda_{2s}|$. 
    \item $|\lambda_{1s}|>|\lambda_{2s}|$ and $\mathtt{m}_a(\lambda_{1s},A_s)>\mathtt{m}_g(\lambda_{1s},A_s)$.
    \item 
    $|\lambda_{1s}|>|\lambda_{2s}|$, $\mathtt{m}_a(\lambda_{1s},A_s)=\mathtt{m}_g(\lambda_{1s},A_s)>1$, and $\lambda_{1s}\in\mathbb{C}\setminus\mathbb{R}$.
    \item 
    $|\lambda_{1s}|>|\lambda_{2s}|$, $\mathtt{m}_a(\lambda_{1s},A_s)=\mathtt{m}_g(\lambda_{1s},A_s)$,  $\lambda_{1s}\in\mathbb{R}$,  
    and there exists an eigenvalue $\hat{\lambda}_1=\lambda_{1s}+\lambda_{1d}\epsilon$ of $\hat{A}$ such that $\mathtt{m}(\hat{\lambda}_1,\hat{A})<\mathtt{m}_a(\lambda_{1s},A_s)$, where $\lambda_{1d}\in\mathbb{C}$.
    \item 
    $|\lambda_{1s}|>|\lambda_{2s}|$, $\mathtt{m}_a(\lambda_{1s},A_s)=\mathtt{m}_g(\lambda_{1s},A_s)$,  $\lambda_{1s}\in\mathbb{R}$,  
    and there exists an eigenvalue $\hat{\lambda}_1=\lambda_{1s}+\lambda_{1d}\epsilon$ of $\hat{A}$ such that $\mathtt{m}(\hat{\lambda}_1,\hat{A})= \mathtt{m}_a(\lambda_{1s},A_s)>1$, where $\lambda_{1d}\in\mathbb{C}\setminus\mathbb{R}$. 

\end{enumerate}
\end{lemma}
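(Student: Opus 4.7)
The plan is to perform a case analysis on exactly which portion of Assumption~\ref{assump2} fails. Recall that Assumption~\ref{assump2} imposes two requirements: (a) $|\lambda_{1s}|$ strictly exceeds the magnitude of every other standard eigenvalue of $A_s$, and (b) one of the sub-conditions (i) or (ii) holds. I would negate these in turn, and at every branch I would route the situation into exactly one of the five enumerated cases.

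First I would handle the failure of (a): combined with the ordering $|\lambda_{1s}|\ge|\lambda_{2s}|$, failure of strict dominance means $|\lambda_{1s}|=|\lambda_{2s}|$, which is precisely case~(i) of the lemma. Henceforth I may assume $|\lambda_{1s}|>|\lambda_{2s}|$. Writing $m_a:=\mathtt{m}_a(\lambda_{1s},A_s)$ and $m_g:=\mathtt{m}_g(\lambda_{1s},A_s)$, I would split on the relation between $m_a$ and $m_g$. If $m_a>m_g$, then sub-condition (ii) of Assumption~\ref{assump2} fails trivially (it requires $m_a=m_g=1$), and sub-condition (i) fails because any eigenvalue $\hat\lambda_1$ of $\hat A$ satisfies $\mathtt{m}(\hat\lambda_1,\hat A)\le m_g<m_a$, ruling out the required multiplicity equality. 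This lands us in case~(ii) of the lemma.

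Assume now $m_a=m_g$. I would note that when $m_a=m_g=1$, Assumption~\ref{assump} is satisfied, so the existence theorem derived from Lemma~\ref{Lem:complex} furnishes an eigenvalue $\hat\lambda_1\in\mathbb{DC}$ of $\hat A$ with standard part $\lambda_{1s}$; its multiplicity is at most $m_g=1$ and at least $1$, hence equal to $1$, so sub-condition (ii) holds, contradicting the standing failure of (b). Thus one may assume $m_a=m_g>1$, and split on the nature of $\lambda_{1s}$. If $\lambda_{1s}\in\mathbb{C}\setminus\mathbb{R}$, then (i) of Assumption~\ref{assump2} is vacuously violated and (ii) is violated since $m_a>1$; this is case~(iii). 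If instead $\lambda_{1s}\in\mathbb{R}$, I would invoke the existence theorem to obtain an eigenvalue $\hat\lambda_1=\lambda_{1s}+\lambda_{1d}\epsilon\in\mathbb{DC}$ of $\hat A$. Either $\mathtt{m}(\hat\lambda_1,\hat A)<m_a$, yielding case~(iv) with $\lambda_{1d}\in\mathbb{C}$; or $\mathtt{m}(\hat\lambda_1,\hat A)=m_a$, in which event $\lambda_{1d}$ must lie in $\mathbb{C}\setminus\mathbb{R}$, because otherwise $\hat\lambda_1\in\mathbb{DR}$ would witness sub-condition (i) of Assumption~\ref{assump2}, giving case~(v).

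The main obstacle I anticipate is of a careful-bookkeeping flavor rather than a technical one: the negation of sub-condition (i) of Assumption~\ref{assump2} must be read as ``no eigenvalue $\hat\lambda_1\in\mathbb{DR}$ with standard part $\lambda_{1s}$ attains multiplicity $m_a$'', rather than as a statement about a single chosen eigenvalue, and the existence theorem deduced from Lemma~\ref{Lem:complex} is the essential ingredient guaranteeing that an eigenvalue is actually available each time one needs to be exhibited. Once these two points are secured, the partition into the five enumerated cases follows mechanically from the case analysis above.
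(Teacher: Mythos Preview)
Your proposal is correct and follows the same enumeration strategy the paper uses; in fact the paper's own proof is simply ``This lemma can be established by enumeration and we omit the details for brevity'', together with a summary table (Table~\ref{Tab:5cases}), so your write-up supplies precisely the details the authors left out. Your identification of the two delicate points---the correct reading of the negation of Assumption~\ref{assump2}(i) as a universal statement, and the use of the existence theorem (Theorem~3.2, via Lemma~\ref{Lem:complex}) to produce a dual-complex eigenvalue in the $\lambda_{1s}\in\mathbb{R}$ branch---is apt and matches what the table implicitly relies on.
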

\begin{proof}
    This lemma can be established by enumeration and  we omit the details for brevity. A comprehensive summary of all cases is provided in Table~\ref{Tab:5cases}. For convenience, $\mathtt{m}_a(\lambda_{1s},A_s)$,$\mathtt{m}_g(\lambda_{1s},A_s)$ and $\mathtt{m}(\hat{\lambda}_{1},\hat{A})$ are abbreviated as $\mathtt{m}_a$, $\mathtt{m}_g$, and $\mathtt{m}$ respectively in this table.
\end{proof}

\begin{table}[]
\centering
\begin{tblr}{
  colspec = {|c|c|c|c|c|}, 
  hlines, vline{2,3}={2,3}{red},vline{4,5}={3}{red},
  cell{2}{2} = {bg=pink}, 
  cell{3}{4} = {bg=pink}, 
}
\SetCell[r=3]{c} \makecell[c]{(i) $|\lambda_{1s}|=|\lambda_{2s}|$.} 
& \SetCell[c=4]{c} \makecell[c]{(ii) $|\lambda_{1s}|>|\lambda_{2s}|$, \\$\mathtt{m}_a>\mathtt{m}_g$.} 
& & & \\  \cline[red]{2}
& \SetCell[r=2]{c} {Assumption~\ref{assump2}\\ (ii) $|\lambda_{1s}|>|\lambda_{2s}|$,\\ $\mathtt{m}_a=\mathtt{m}_g=1$.} 
& \SetCell[r=2]{c} \makecell[c]{(iii) $|\lambda_{1s}|>|\lambda_{2s}|$,\\
$\mathtt{m}_a=\mathtt{m}_g>1$,\\
$\lambda_{1s}\notin\mathbb{R}$.} 
& \SetCell[c=2]{c} \makecell[c]{(iv) $|\lambda_{1s}|>|\lambda_{2s}|$,\\
$\mathtt{m}_a=\mathtt{m}_g>1$,\\
$\lambda_{1s}\in\mathbb{R}$,\\
$\mathtt{m}<\mathtt{m}_a$.} 
& \\ \cline[red]{4}
& 
& 
& \makecell[c]{Assumption~\ref{assump2}\\ (i) $|\lambda_{1s}|>|\lambda_{2s}|$,\\ $\mathtt{m}_a=\mathtt{m}_g>1$,\\
$\lambda_{1s}\in\mathbb{R},\lambda_{1d}\in\mathbb{R}$,\\
$\mathtt{m}=\mathtt{m}_a$.} 
& \makecell[c]{(v) $|\lambda_{1s}|>|\lambda_{2s}|$,\\
$\mathtt{m}_a=\mathtt{m}_g>1$,\\
$\lambda_{1s}\in\mathbb{R},\lambda_{1d}\notin\mathbb{R}$,\\
$\mathtt{m}_a=\mathtt{m}$.} \\ \cline[red]{2,4}
\end{tblr}
\caption{Categorization of cases in Lemma~\ref{Lem:5cases} and Assumption~\ref{assump2}.}
\label{Tab:5cases}
\end{table}

\begin{theorem}\label{Thm:divergence_quater}
    Let $\hat{A}=A_s+A_d\epsilon\in\mathbb{DQ}^{n\times n}$ and let $\lambda_{1s},\dots,\lambda_{ms}\in\mathbb{C}$  be distinct 
standard eigenvalues of $A_s$ that satisfy $|\lambda_{1s}|\geq|\lambda_{2s}|\geq\cdots\geq|\lambda_{ms}|$.  
If Assumption~\ref{assump2} is violated for $\hat{A}$, then the power method (Algorithm~\ref{alg_PM}) either diverges or fails to yield the desired eigenpair unless a sufficiently good initial vector is chosen. 
  
\end{theorem}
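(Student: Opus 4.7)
The plan is to dispatch the five mutually exclusive scenarios enumerated in Lemma~\ref{Lem:5cases} one at a time, in each case either exhibiting oscillation of the normalized iterate or showing that the dual part of $\hat{\vv}^{(k)}$ blows up relative to its standard part. Throughout I would work from the Jordan-like decomposition of Theorem~\ref{Thm:Jordan like} and Corollary~\ref{Cor:Jordan_2}, writing $\hat{\vv}^{(0)}=\sum_{j}\hat{U}_j\hat{\veta}_j$ in the Jordan-like basis and computing $\hat{A}^k\hat{\vv}^{(0)}$ block by block. The phrase ``unless a sufficiently good initial vector is chosen'' is honoured by allowing the conclusion to fail on a proper subvariety of bad initial vectors in $\mathbb{DQ}^n$.

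Cases (i) and (iii) are the oscillatory ones. In case (i), two distinct standard eigenvalues $\lambda_{1s},\lambda_{2s}$ share the same magnitude, so the block contributions $\hat{\lambda}_1^k\hat{\veta}_1$ and $\hat{\lambda}_2^k\hat{\veta}_2$ remain of the same order in $k$; a minimal witness is $\hat{A}=\diag(1,-1)$, for which $\hat{\vv}^{(k)}$ alternates in direction and $\hat{\lambda}^{(k)}$ never stabilizes. Case (iii) invokes Theorem~\ref{Thm:adjoint}: a complex dominant standard eigenvalue with $\mathtt{m}_a=\mathtt{m}_g>1$ lifts to a pair of conjugate eigenvalues on the DCAM side, and pulling back via $\mathcal{F}^{-1}$ produces a genuine mixing of two distinct dual quaternion eigendirections (a concrete witness being $\hat{A}=\ii\,I_2$), which again prevents the normalized iterate from settling.

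Cases (ii), (iv), and (v) all trace the divergence to a Jordan-block size $\ge 2$ appearing either in the standard part $A_s$ or in the dual-part block $J_{1d}$ of Lemma~\ref{Lem:Jordan:mis}, or to a non-real dual part $\lambda_{1d}$. For case (ii), direct expansion gives
\begin{equation*}
(J_1+H_1\epsilon)^k = J_1^k + \Bigl(\sum_{i=0}^{k-1} J_1^i H_1 J_1^{k-1-i}\Bigr)\epsilon,
\end{equation*}
so the dual-part norm grows one polynomial degree faster in $k$ than the standard-part norm; after normalization the dual-to-standard ratio tends to infinity, which is incompatible with convergence to any eigenvector whose components are related by a bounded dual coupling. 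Case (iv) reduces to the same computation applied to the block $\lambda_{1s}I+J_{1d}\epsilon$ supplied by Lemma~\ref{Lem:Jordan:mis}, while case (v) is handled by observing that $\hat{\lambda}_1^k=\lambda_{1s}^k+k\lambda_{1s}^{k-1}\lambda_{1d}\epsilon$ with complex $\lambda_{1d}$ induces a rotation inside the $n_1$-dimensional eigenspace that is not a unit dual quaternion scaling of $\hat{\veta}_1$, so the normalized iterate traces a loop rather than settling. The main obstacle, common to (ii), (iv), and (v), is to verify that these divergent behaviors cannot be cancelled by a lucky choice of $\hat{\veta}_1$; I would argue this by checking that the ``bad'' initial vectors, namely those lying in a proper invariant subspace of the offending block or orthogonal to the offending eigendirection, form a proper subvariety of $\mathbb{DQ}^n$, which is precisely the ``sufficiently good initial vector'' escape clause in the statement.
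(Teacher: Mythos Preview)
Your five-case skeleton matches the paper, and your dual-part blow-up arguments for cases (ii) and (iv) are actually cleaner than the paper's treatment (which just cites the classical real/complex literature for (ii) and gives a slightly different direction-based argument for (iv)). The genuine gap is in cases (iii) and (v), where you have not identified the mechanism the paper relies on: \emph{quaternion non-commutativity of right eigenvalues}.

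Here is the point you are missing. In both (iii) and (v) the Jordan-like block is a scalar block, so the limiting direction of $\hat{\vv}^{(k)}$ is (up to $o(1)$) a combination $\sum_{i=1}^{n_1}\hat{\vu}_i\hat{\alpha}_i\hat{\lambda}_1^k$ of right eigenvectors with the \emph{same} right eigenvalue $\hat{\lambda}_1$. Unlike the commutative case, such a combination is \emph{not} automatically a right eigenvector: applying $\hat{A}$ gives $\sum_i\hat{\vu}_i\hat{\lambda}_1\hat{\alpha}_i\hat{\lambda}_1^k$, and this equals $\bigl(\sum_i\hat{\vu}_i\hat{\alpha}_i\hat{\lambda}_1^k\bigr)\mu$ for some $\mu$ only if the conjugates $(\hat{\alpha}_i\hat{\lambda}_1^k)^{-1}\hat{\lambda}_1(\hat{\alpha}_i\hat{\lambda}_1^k)$ agree for all $i$. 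When $\lambda_{1s}\in\mathbb{C}\setminus\mathbb{R}$ (case iii) or $\lambda_{1s}\in\mathbb{R}$ but $\lambda_{1d}\in\mathbb{C}\setminus\mathbb{R}$ (case v), this forces each $\alpha_{is}\lambda_{1s}^k\gamma^{-1}$ (resp.\ $\alpha_{is}\gamma_s^{-1}$) to lie in $\mathbb{C}$, which, writing $\alpha_{is}=a_i+b_i\jj$, is the rank-one condition on $\{[a_i,b_i]\}_{i=1}^{n_1}$. Generic initial data violate this, so the residual $\hat{A}\hat{\vv}^{(k)}-\hat{\vv}^{(k)}\hat{\lambda}^{(k)}$ does not tend to zero. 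Your DCAM detour in (iii) does not deliver this: even granting that $\mathcal{F}$ intertwines the unnormalised iterations, the ``mixing of two conjugate directions'' on the $\mathbb{DC}^{2n}$ side can still pull back to a single quaternionic eigenline when $n_1=1$ (this is exactly why Assumption~\ref{assump2}(ii) works for PM but not for DCAM-PM), so you must in any event check on the $\mathbb{DQ}^n$ side whether the limit is a right eigenvector---and that check is precisely the non-commutativity computation above. Likewise, in (v) the iterate does not ``trace a loop'': the standard part settles and nothing blows up (the block is $\hat{\lambda}_1 I_{n_1}$, not a nilpotent perturbation); the failure is that the settled direction is not a right eigenvector for any single $\mu$.
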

\begin{proof} 

If $\hat{A}$ satisfies \textup{(i)} or \textup{(ii)}, the power method loses its validity, as the standard part of $\hat{\vv}^{(k)}$ does not converge to the eigenvector of $A_s$, as reported in \cite{Mitchell_1967,doi:10.1137/0710035}.

If $\hat{A}$ satisfies \textup{(iii)}, by Theorem \ref{Thm:Jordan like}, there exists a DQM $\hat{P}$ such that $\hat{P}^{-1}\hat{A}\hat{P}=$diag$(\lambda_{1s}I_{n_1}+H_1\epsilon,J_2+H_2\epsilon,\dots,J_m+H_m\epsilon)$ in the form of  \eqref{eq:diag}. Denote the $i$-th column of $\hat{P}$ by $\vu_i=\vu_{is}+\vu_{id}\epsilon$ for any $i\in[n]$. For an initial vector $\hat{\vv}^{(0)}=\vv^{(0)}_s+\vv^{(0)}_d\epsilon$, $\vv^{(0)}_s$ has a unique decomposition $\vv^{(0)}_s=\sum_{i=1}^n\vu_{is}\alpha_{i}$, where $\alpha_{i}\in\mathbb{Q}$. Then the standard part of $\hat{\vv}^{(k)}$ produced by the power method converges to $\beta^{-1}\sum_{i=1}^{n_1}\vu_{is}\alpha_{i}\lambda_{1s}^k$, where $\beta=\Vert\sum_{i=1}^{n_1}\vu_{is}\alpha_{i}\lambda_{1s}^k\Vert_2$. Consider the residual:
\begin{equation*}
    A_s(\beta^{-1}\sum_{i=1}^{n_1}\vu_{is}\alpha_{i}\lambda_{1s}^k)-(\beta^{-1}\sum_{i=1}^{n_1}\vu_{is}\alpha_{i}\lambda_{1s}^k)\gamma^{-1}\lambda_{1s}\gamma=\beta^{-1}\sum_{i=1}^{n_1}\vu_{is}(\lambda_{1s}\alpha_{i}\lambda_{1s}^k\gamma^{-1}-\alpha_i\lambda_{1s}^k\gamma^{-1}\lambda_{1s})\gamma,
\end{equation*}
where $\gamma\in\mathbb{Q}$ is a non-zero quaternion. Thus, $\sum_{i=1}^{n_1}\vu_{is}\alpha_{i}\lambda_{1s}^k$ is an eigenvector of $A_s$ if and only if there exists a non-zero quaternion $\gamma$ such that $\lambda_{1s}\alpha_{i}\lambda_{1s}^k\gamma^{-1}=\alpha_i\lambda_{1s}^k\gamma^{-1}\lambda_{1s}$ for $i=1,\dots,n_1$. Given that $\lambda_{1s}\in\mathbb{C}\setminus\mathbb{R}$, this is equivalent to $\alpha_{i}\lambda_{1s}^k\gamma^{-1}\in\mathbb{C}$ for $i=1,\dots,n_1$. Upon denoting $\alpha_i=a_i+b_i\jj$ for $i=1,\dots,n_1$, where $a_i,b_i\in\mathbb{C}$, this reduces to the to the rank of the complex vector set $\{[a_1,b_1],\dots,[a_{n_1},b_{n_1}]\}$ being 1.   Consequently, $\hat{\vv}^{(k)}$ fails to converge to the target eigenvector if the rank of the complex vector set $\{[a_1,b_1],\dots,[a_{n_1},b_{n_1}]\}$ exceeds 1.

If $\hat{A}$ satisfies \textup{(iv)}, then for any eigenvalue $\hat{\xi}=\lambda_{1s}+\xi_{d}\epsilon$ with standard part $\lambda_{1s}$, we have $\mathtt{m}(\hat{\xi},\hat{A})<\mathtt{m}_g(\lambda_{1s},A_s)=\mathtt{m}_a(\lambda_{1s},A_s)$ by Lemma~\ref{Lem:Jordan:mis}.  Let $\hat{\vu}_1,\dots,\hat{\vu}_t$ denote the eigenvectors of $\hat{A}$ associated with the eigenvalue $\hat{\xi}$; these vectors are appreciably linearly independent and take the form $\hat{\vu}_i=\vu_{is}+\vu_{id}\epsilon$ for $i=1,\dots,t$, where $t=\mathtt{m}(\hat{\xi},\hat{A})$. There exists $\vu_{is}\in\mathbb{Q}^n$, $i=t+1,\dots,n$, and $P_s=(\vu_{1s}\ \vu_{2s}\ \cdots \vu_{ns})$ such that $P_s^{-1}A_sP_s=$diag$(\lambda_{1s}I_{n_1},J_2,\dots,J_m)$, where $J_i$ is the Jordan matrix corresponding to $\lambda_{ms}$. We can express the initial vector $\hat{\vv}^{(0)}$ as $\hat{\vv}^{(0)}=\sum_{i=1}^n\vu_{is}\alpha_{i}+\vw\epsilon$, with $\alpha_i\in\mathbb{Q}$, $\vw\in\mathbb{Q}^n$. Then the standard part of $\hat{\vv}^{(k)}$ produced by the power method converges to $\sum_{i=1}^{n_1}\vx_{is}\alpha_{i}$. 
It follows that $\hat{\vv}^{(k)}$ will not converge to the eigenvector associated with $\xi$ if $\alpha_i\neq 0$ for some $i=t+1,\dots,n_1$.

If $\hat{A}$ satisfies \textup{(v)}, by Lemma~\ref{Lem:Jordan:mis}, there exists a DQM $\hat{P}$ such that $\hat{P}^{-1}\hat{A}\hat{P}=$diag$(\hat{\lambda}_{1}I_{n_1},J_2+H_2\epsilon,\dots,J_m+H_m\epsilon)$ in the form of  \eqref{eq:diag:mis}. Denote the i-th column of $\hat{P}$ as $\hat{\vu}_i=\vu_{is}+\vu_{id}\epsilon$. For an initial vector $\hat{\vv}^{(0)}=\vv^{(0)}_s+\vv^{(0)}_d\epsilon$, $\hat{\vv}^{(0)}$ has a unique decomposition $\hat{\vv}^{(0)}=\sum_{i=1}^n\hat{\vu}_{i}\hat{\alpha}_{i}$, where $\hat{\alpha}_{i}=\alpha_{is}+\alpha_{id}\epsilon\in\mathbb{DQ}$. Then the standard part of $\hat{\vv}^{(k)}$ produced by the power method converges to $\hat{\beta}^{-1}\sum_{i=1}^{n_1}\hat{\vu}_{i}\hat{\alpha}_{i}\hat{\lambda}_{1}^k$, where $\hat{\beta}=\Vert\sum_{i=1}^{n_1}\hat{\vu}_{i}\hat{\alpha}_{i}\hat{\lambda}_{1}^k\Vert_2$. Consider the residual:
\begin{equation*}
    \hat{A}(\hat{\beta}^{-1}\sum_{i=1}^{n_1}\hat{\vu}_{i}\hat{\alpha}_{i}\hat{\lambda}_{1}^k)-(\hat{\beta}^{-1}\sum_{i=1}^{n_1}\hat{\vu}_{i}\hat{\alpha}_{i}\hat{\lambda}_{1}^k)\hat{\gamma}^{-1}\hat{\lambda}_{1}\hat{\gamma}=\hat{\beta}^{-1}\sum_{i=1}^{n_1}\hat{\vu}_{i}(\hat{\lambda}_{1}\hat{\alpha}_{i}\hat{\lambda}_{1}^k\hat{\gamma}^{-1}-\hat{\alpha}_i\hat{\lambda}_{1}^k\hat{\gamma}^{-1}\hat{\lambda}_{1})\hat{\gamma},
\end{equation*}
where $\hat{\gamma}=\gamma_s+\gamma_d\epsilon\in\mathbb{DQ}$ is an appreciable dual quaternion. Thus, $\sum_{i=1}^{n_1}\hat{\vu}_{i}\hat{\alpha}_{i}\hat{\lambda}_{1}^k$ is an eigenvector of $\hat{A}$ if and only if there exists an appreciable dual quaternion $\hat{\gamma}$ such that $\hat{\lambda}_{1}\hat{\alpha}_{i}\hat{\lambda}_{1}^k\hat{\gamma}^{-1}=\hat{\alpha}_i\hat{\lambda}_{1}^k\hat{\gamma}^{-1}\hat{\lambda}_{1}$ for $i=1,\dots,n_1$. Given that $\lambda_{1s}\in\mathbb{R}$ and $\lambda_{1d}\in\mathbb{C}\setminus\mathbb{R}$, this is equivalent to $\alpha_{is}\lambda_{1s}^k\gamma_s^{-1}\in\mathbb{C}$ for $i=1,\dots,n_1$. Upon denoting $\alpha_{is}=a_i+b_i\jj$ for $i=1,\dots,n_1$, where $a_i,b_i\in\mathbb{C}$, this reduces to the to the rank of the complex vector set $\{[a_1,b_1],\dots,[a_{n_1},b_{n_1}]\}$ being 1.   Consequently, $\hat{\vv}^{(k)}$ fails to converge to the target eigenvector if the rank of the complex vector set $\{[a_1,b_1],\dots,[a_{n_1},b_{n_1}]\}$ exceeds 1. 
\end{proof}

This theorem illustrates the necessity of Assumption~\ref{assump2}, as can also be observed from Examples~\ref{exp:fail(iii)},~\ref{exp:fail(iv)}, and \ref{exp:fail(v)}.

\begin{example}\label{exp:fail(iii)}
    Suppose that $\hat{A}=A_s+A_d\epsilon$, where
    \begin{equation*}
        A_s=\begin{pmatrix}
            \ii&0\\
            0&\ii
        \end{pmatrix} \text{ and } 
        A_d=\begin{pmatrix}
            \ii&0\\
            0&\ii
        \end{pmatrix}.
    \end{equation*}
    Then $\hat{A}$ satisfied Lemma~\ref{Lem:5cases}~(iii). Let $\hat{\vv}^{(0)}=[1+\epsilon,\jj]^\top$ be the initial vector of the power method. By direct computation, we obtain $\hat{\vv}^{(4k)}=[\frac{1}{\sqrt{2}}+\frac{1}{2\sqrt{2}}\epsilon,\frac{1}{\sqrt{2}}\jj-\frac{1}{2\sqrt{2}}\epsilon\jj]^\top$. However, since $[\frac{1}{\sqrt{2}},\frac{1}{\sqrt{2}}\jj]^\top$ is not an eigenvector of $\hat{A_s}$, the sequence $\hat{\vv}^{(k)}$ fails to converge to any eigenvector of $\hat{A}$. 
\end{example}

\begin{example}\label{exp:fail(iv)}
    Suppose that $\hat{A}=A_s+A_d\epsilon$, where
    \begin{equation*}
        A_s=\begin{pmatrix}
            1&0&0\\
            0&1&0\\
            0&0&1
        \end{pmatrix} \text{ and } 
        A_d=\begin{pmatrix}
            2&0&0\\
            0&1&1\\
            0&0&1
        \end{pmatrix}.
    \end{equation*}
    Then $\hat{A}$ satisfies Lemma~\ref{Lem:5cases}~(iv). Let $\hat{\vv}^{(0)}=[1,1,1]^\top$, we have $\hat{\vv}^{(k)}=\frac{1}{\sqrt{3}}[1-\frac{1}{3}k\epsilon, 1-\frac{1}{3}k, 1+\frac{2}{3}k\epsilon], \lambda^{(k)}=1+\frac{5}{3}\epsilon$. 
    Consequently, $\hat{\vv}^{(k)}$ diverges and $\lambda^{(k)}$ fails to converge to any eigenvalue of $\hat{A}$.
\end{example}

\begin{example}\label{exp:fail(v)}
    Let $\hat{A}=A_s+A_d\epsilon$, where
    \begin{equation*}
        A_s=\begin{pmatrix}
            1&0\\
            0&1
        \end{pmatrix} \text{ and } 
        A_d=\begin{pmatrix}
            \ii&0\\
            0&\ii
        \end{pmatrix}.
    \end{equation*}
    Then $\hat{A}$ satisfies Lemma~\ref{Lem:5cases}~(v). Let $\hat{\vv}^{(0)}=[1,\jj]^{\top}$, 
     we have $\hat{\vv}^{(k)}=\frac{1}{\sqrt{2}}[1+k\ii\epsilon,(1+k\ii\epsilon)\jj]^\top$ and $\hat{\lambda}^{(k)}=1$. 
    This indicates that $\hat{\vv}^{(k)}$ diverges and $\lambda^{(k)}$ fails to converge to any eigenvalue of $\hat{A}$.
\end{example}



\subsection{Non-Hermitian dual complex matrices}
Although non-Hermitian dual complex matrices are a special case of DQMs, they possess the property of commutativity. This can lead to unique characteristics. To account for this, we adapt Assumption~\ref{assump2} to Assumption~\ref{assump3} as follows.

\begin{assumption}\label{assump3}  
Let 
$\hat{A}=A_s+A_d\epsilon\in\mathbb{DC}^{n\times n}$ be a non-Hermitian dual complex matrix, and let $\lambda_{1s},\dots,\lambda_{ms}\in\mathbb{C}$ denote the eigenvalues of $A_s$. We assume that $|\lambda_{1s}|$  is larger than the magnitude of any other eigenvalues of $A_s$, and there exists an eigenvalue $\hat{\lambda}_1=\lambda_{1s}+\lambda_{1d}\epsilon\in\mathbb{DC}$ of $\hat{A}$ such that $\mathtt{m}(\hat{\lambda}_1,\hat{A})=\mathtt{m}_a(\lambda_{1s},A_{s})$.
\end{assumption}
Under Assumption~\ref{assump3}, we establish the convergence analysis for non-Hermitian dual complex matrices. The proof closely mirrors that for the non-Hermitian DQMs case and is therefore omitted here.
\begin{lemma}[Jordan-like form under Assumption~\ref{assump3}]\label{Lem:Jordan_3}
     Suppose $\hat{A}\in\mathbb{DC}^{n\times n}$ satisfies Assumption~\ref{assump3}.  Then there exists $\hat{P}=P_s+P_d\epsilon \in \mathbb{DC}^{n \times n}$ and  $H_i \in \mathbb{C}^{n_i \times n_i}$ such that $\hat{P}^{-1}\hat{A}\hat{P}$ takes the following block diagonal Jordan-like form: 
    \begin{equation}\label{eq:diag_3}
		\hat{P}^{-1}\hat{A}\hat{P}=
		\begin{pmatrix}
			\hat{\lambda}_{1}I_{n_1}&&&\\
			&J_2+H_2\epsilon&&\\
			&&\ddots&\\
			&&&J_m+H_m\epsilon
		\end{pmatrix}.
	\end{equation}
\end{lemma}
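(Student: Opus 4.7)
The plan is to mirror the proof of Corollary~\ref{Cor:Jordan_2} in the simpler dual complex setting, where the commutativity of complex multiplication trivializes several of the steps needed in the quaternion case. First I would apply Theorem~\ref{Thm:Jordan like} directly to $\hat{A}\in\mathbb{DC}^{n\times n}$. Because $\mathbb{DC}\subset\mathbb{DQ}$ and the Sylvester equations $W_i^*A_dV_j+J_iD_{ij}-D_{ij}J_j=0$ appearing in the construction of the dual part have complex solutions whenever the spectra of the two blocks are disjoint, the resulting $\hat{S}=S_s+S_d\epsilon$ can be taken in $\mathbb{DC}^{n\times n}$, yielding
\[
\hat{S}^{-1}\hat{A}\hat{S}=\diag(\lambda_{1s}I_{n_1}+H_1\epsilon,\,J_2+H_2\epsilon,\ldots,J_m+H_m\epsilon),
\]
with $H_i\in\mathbb{C}^{n_i\times n_i}$. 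A key simplification in the complex case is that, because $J_1=\lambda_{1s}I_{n_1}$ commutes with every $D_{11}$, the off-diagonal correction $J_1D_{11}-D_{11}J_1$ vanishes, so $H_1=W_1^*A_dV_1$ is uniquely determined by $\hat{A}$ and the chosen $P_s$.

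Next I would translate Assumption~\ref{assump3} into a spectral condition on $H_1$. Because the block-diagonal form acts componentwise on $\hat{\vx}$, any eigenvector of $\hat{A}$ for $\hat{\lambda}_1=\lambda_{1s}+\lambda_{1d}\epsilon$ has components in the blocks $J_i+H_i\epsilon$ for $i\geq 2$ that must satisfy $J_i\vx_{i,s}=\lambda_{1s}\vx_{i,s}$; since $\lambda_{1s}$ is not an eigenvalue of $J_i$, we get $\vx_{i,s}=0$, and the dual-part equation then forces $\vx_{i,d}=0$ as well. Thus every eigenvector of $\hat{\lambda}_1$ is supported entirely in the first block. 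Writing out the eigenvector equation in that block and exploiting commutativity reduces it to $H_1\vx_{1,s}=\lambda_{1d}\vx_{1,s}$. Consequently, the multiplicity $\mathtt{m}(\hat{\lambda}_1,\hat{A})$ coincides with the geometric multiplicity of $\lambda_{1d}$ as an eigenvalue of the $n_1\times n_1$ complex matrix $H_1$.

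The final step is immediate: Assumption~\ref{assump3} gives $\mathtt{m}(\hat{\lambda}_1,\hat{A})=\mathtt{m}_a(\lambda_{1s},A_s)=n_1$, so $\ker(H_1-\lambda_{1d}I_{n_1})=\mathbb{C}^{n_1}$, whence $H_1=\lambda_{1d}I_{n_1}$. Setting $\hat{P}=\hat{S}$ then produces the desired block diagonal form \eqref{eq:diag_3}.

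I expect the main obstacle, and the only place that requires genuine care, to be the eigenvector analysis of the second step: rigorously verifying that $\mathtt{m}(\hat{\lambda}_1,\hat{A})$ equals precisely the geometric multiplicity of $\lambda_{1d}$ in $H_1$. The subtlety is ruling out eigenvectors whose dual part ``leaks'' into blocks $i\geq 2$ despite vanishing standard parts, which requires invoking the invertibility of $J_i-\lambda_{1s}I_{n_i}$ for $i\geq 2$. Everything else is a direct specialization of the quaternion argument, with the bonus that $T$-diagonalization of $H_1$ (needed in the quaternion case) is now unnecessary since $H_1$ is forced to be a scalar matrix outright.
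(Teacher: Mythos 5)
Your proof is correct and follows essentially the route the paper intends (and omits): specialize Theorem~\ref{Thm:Jordan like} to the dual complex setting, note that commutativity lets the whole construction stay in $\mathbb{DC}$, and then use the multiplicity condition of Assumption~\ref{assump3} to force the first dual block to be $\lambda_{1d}I_{n_1}$, exactly as in Lemma~\ref{Lem:Jordan:mis} and Corollary~\ref{Cor:Jordan_2} (your only variation is arguing directly on $H_1$ instead of first Jordanizing it, which is harmless). The one step you should state explicitly is why you may write the first block as $\lambda_{1s}I_{n_1}+H_1\epsilon$ at the outset: Assumption~\ref{assump3} combined with $\mathtt{m}(\hat{\lambda}_1,\hat{A})\le \mathtt{m}_g(\lambda_{1s},A_s)\le \mathtt{m}_a(\lambda_{1s},A_s)$ forces $\mathtt{m}_g(\lambda_{1s},A_s)=\mathtt{m}_a(\lambda_{1s},A_s)$, so $J_1=\lambda_{1s}I_{n_1}$ — a fact you use implicitly but only invoke at the end.
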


\begin{theorem}\label{Thm:con_bd_adj}
 Suppose that $\hat{A}\in\mathbb{DC}^{n\times n}$  satisfies Assumption~\ref{assump3},  and let  $\hat{P}$ defined in Lemma~\ref{Lem:Jordan_3} be partitioned as  $\hat{P}=(\hat{U}_1\ \hat{U}_2\ \cdots\ \hat{U}_m)$, where the columns of $\hat{U}_1\in\mathbb{DC}^{n\times n_1}$ are eigenvectors of $\hat{A}$ corresponding to the strictly dominant eigenvalue $\hat{\lambda}_1$, and $\hat{U}_i\in\mathbb{DC}^{n\times n_i}$ for any $i = 2, \dots, m$.   
	Given an initial  dual complex vector  $\hat{\vv}^{(0)}\in\mathbb{DC}^{n\times 1}$,  consider its unique decomposition: $\hat{\vv}^{(0)}=\sum_{j=1}^{m}\hat{U}_j\hat{\veta}_j$, where $\hat{\veta}_j\in\mathbb{DC}^{n_j\times 1}$ for all $j=1,\dots,m$ and $\hat{\veta}_1\in\mathbb{DC}^{n_1}$ is appreciable. 
   Then, the sequence $\hat{\vv}^{(k)}$ and $\hat{\lambda}^{(k)}$ generated by Algorithm~\ref{alg_PM} satisfies
	\begin{equation*} 
        \hat{\vv}^{(k)}
        =\frac{\hat{U}_1\hat{\veta}_1}{\Vert \hat{U}_1\hat{\veta}_1\Vert_2}\left(\frac{\hat{\lambda}_1}{|\hat{\lambda}_1|}\right)^k\left(1+\tilde{O}_D\left(\left|\frac{\lambda_{2s}}{\lambda_{1s}}\right|^k\right)\right),
	\end{equation*} 
	and
	\begin{equation*}
		\hat{\lambda}^{(k)}=
            \hat{\lambda}_1\left(1+\tilde{O}_D\left(\left|\frac{\lambda_{2s}}{\lambda_{1s}}\right|^k\right)\right).
	\end{equation*} 
\end{theorem}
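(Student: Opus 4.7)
The plan is to mirror the proof of Theorem~\ref{Thm:converge}, with the simplifications afforded by the commutativity of $\mathbb{DC}$. I would first apply Lemma~\ref{Lem:Jordan_3} to write
$$
\hat{A}^k \hat{\vv}^{(0)} = \hat{U}_1 \hat{\lambda}_1^k \hat{\veta}_1 + \sum_{j=2}^m \hat{U}_j (J_j + H_j\epsilon)^k \hat{\veta}_j,
$$
and then transfer the decay estimates of Lemma~\ref{Lem:con_bd} to the dual complex setting. Those estimates relied only on bounds for $\Vert J_j^k\Vert_2$ in terms of the maximal Jordan block size together with the expansion $(J_j+H_j\epsilon)^k = J_j^k + \sum_{i=0}^{k-1} J_j^i H_j J_j^{k-1-i}\epsilon$, both of which hold verbatim over $\mathbb{C}$. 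This yields $\Vert \hat{U}_j(J_j+H_j\epsilon)^k\hat{\veta}_j\Vert_{2^R} = \tilde{O}(|\lambda_{js}|^k)$ for $j=2,\ldots,m$.

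Next I would exploit commutativity to collapse the leading term. Because the entries of $\hat{\lambda}_1^k$ commute with those of $\hat{U}_1\hat{\veta}_1$, we have
$$
\hat{U}_1 \hat{\lambda}_1^k \hat{\veta}_1 = \hat{\lambda}_1^k\, \hat{U}_1 \hat{\veta}_1, \qquad \Vert \hat{U}_1 \hat{\lambda}_1^k \hat{\veta}_1 \Vert_2 = |\hat{\lambda}_1|^k\,\Vert \hat{U}_1\hat{\veta}_1\Vert_2,
$$
where the first identity uses the scalar commutation and the second uses $|\hat{\lambda}_1^k| = |\hat{\lambda}_1|^k$ for dual complex numbers. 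Substituting these into $\hat{\vv}^{(k)} = \hat{A}^k\hat{\vv}^{(0)}/\Vert\hat{A}^k\hat{\vv}^{(0)}\Vert_2$ and applying the triangle inequality together with the dual-norm perturbation estimate \eqref{eq:dual2} to the denominator gives
$$
\hat{\vv}^{(k)} = \frac{\hat{U}_1\hat{\veta}_1}{\Vert \hat{U}_1\hat{\veta}_1\Vert_2}\left(\frac{\hat{\lambda}_1}{|\hat{\lambda}_1|}\right)^k\!\left(1+\tilde{O}_D\!\left(\left|\frac{\lambda_{2s}}{\lambda_{1s}}\right|^k\right)\right),
$$
as required.

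Finally, for $\hat{\lambda}^{(k)} = (\hat{\vv}^{(k)})^* \hat{A}\hat{\vv}^{(k)}$, commutativity turns the non-commutative Rayleigh quotient that appeared in case~(ii) of Theorem~\ref{Thm:converge} into an ordinary scalar ratio: the leading terms in numerator and denominator satisfy
$$
\frac{\hat{\veta}_1^*(\hat{\lambda}_1^*)^k \hat{U}_1^*\hat{U}_1\hat{\lambda}_1^{k+1}\hat{\veta}_1}{\hat{\veta}_1^*(\hat{\lambda}_1^*)^k \hat{U}_1^*\hat{U}_1\hat{\lambda}_1^{k}\hat{\veta}_1} = \frac{|\hat{\lambda}_1|^{2k}\hat{\lambda}_1\,\hat{\veta}_1^*\hat{U}_1^*\hat{U}_1\hat{\veta}_1}{|\hat{\lambda}_1|^{2k}\,\hat{\veta}_1^*\hat{U}_1^*\hat{U}_1\hat{\veta}_1} = \hat{\lambda}_1,
$$
so the awkward $\hat{\veta}_1^*\hat{\lambda}_1\hat{\veta}_1/\hat{\veta}_1^*\hat{\veta}_1$ factor from the quaternion case disappears. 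Combining this identity with the subdominant estimates yields the stated form of $\hat{\lambda}^{(k)}$.

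There is no substantial new obstacle beyond what was handled in the DQM case; the entire argument reduces to recording which steps simplify under commutativity. The only place requiring mild care is the denominator estimate $\Vert \hat{U}_1\hat{\lambda}_1^k\hat{\veta}_1 + \sum_j \hat{U}_j(J_j+H_j\epsilon)^k\hat{\veta}_j\Vert_2 = |\hat{\lambda}_1|^k\Vert\hat{U}_1\hat{\veta}_1\Vert_2 + \tilde{O}_D(|\lambda_{2s}|^k)$, which requires applying \eqref{eq:dual2} with $\hat{\vx} = \hat{U}_1\hat{\lambda}_1^k\hat{\veta}_1$ and $\hat{\vy}$ equal to the remainder, and verifying appreciability of $\hat{\vx}$ using that $\hat{\veta}_1$ is appreciable and $\hat{\lambda}_1$ is appreciable (since $\lambda_{1s}\neq 0$).
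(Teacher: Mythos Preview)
Your proposal is correct and is precisely the approach the paper intends: the paper states that ``the proof closely mirrors that for the non-Hermitian DQMs case and is therefore omitted here,'' and your outline does exactly this, carrying over Lemma~\ref{Lem:con_bd} and the argument of Theorem~\ref{Thm:converge} while recording the simplifications (factoring out $\hat{\lambda}_1^k$, the identity $|\hat{\lambda}_1^k|=|\hat{\lambda}_1|^k$, and the collapse of the Rayleigh quotient to $\hat{\lambda}_1$) that commutativity of $\mathbb{DC}$ affords.
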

Through an analysis similar to that in Section~\ref{Sec:necessary}, we can conclude that Assumption~\ref{assump3} is also necessary for Theorem~\ref{Thm:con_bd_adj}. 
\begin{theorem}\label{Thm:divergence_complex}
    Let $\hat{A}=A_s+A_d\epsilon\in\mathbb{DC}^{n\times n}$.  
If Assumption~\ref{assump3} is violated for $\hat{A}$, then the power method (Algorithm~\ref{alg_PM}) either diverges or fails to yield the desired eigenpair unless a sufficiently good initial vector is chosen. 
\end{theorem}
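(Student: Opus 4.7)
The plan is to follow the structure of Theorem~\ref{Thm:divergence_quater} while exploiting the commutativity of complex numbers, which substantially simplifies the case analysis. First I would establish a complex analogue of Lemma~\ref{Lem:5cases}: since the obstructions that Lemma~\ref{Lem:5cases}~(iii) and (v) address are purely non-commutative (involving $\alpha_i\lambda^k\gamma^{-1}\not\in\mathbb{C}$), they collapse in the dual complex setting. The negation of Assumption~\ref{assump3} therefore reduces to three cases:
\begin{enumerate}[label=\textup{(\alph*)}]
    \item $|\lambda_{1s}|=|\lambda_{2s}|$;
    \item $|\lambda_{1s}|>|\lambda_{2s}|$ and $\mathtt{m}_a(\lambda_{1s},A_s)>\mathtt{m}_g(\lambda_{1s},A_s)$;
    \item $|\lambda_{1s}|>|\lambda_{2s}|$, $\mathtt{m}_a(\lambda_{1s},A_s)=\mathtt{m}_g(\lambda_{1s},A_s)$, but every dual eigenvalue $\hat{\lambda}_1$ of $\hat{A}$ with standard part $\lambda_{1s}$ satisfies $\mathtt{m}(\hat{\lambda}_1,\hat{A})<\mathtt{m}_a(\lambda_{1s},A_s)$.
\end{enumerate}

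Cases (a) and (b) can be disposed of by invoking the classical theory of the power method on complex matrices \cite{Mitchell_1967,doi:10.1137/0710035}: in each case the standard part $\vv^{(k)}_s$ already fails to converge to an eigenvector of $A_s$, so $\hat{\vv}^{(k)}$ cannot converge to an eigenvector of $\hat{A}$. This parallels the treatment of (i) and (ii) in the proof of Theorem~\ref{Thm:divergence_quater}.

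For case (c), I would apply the complex version of Theorem~\ref{Thm:Jordan like}: since $A_s$ is diagonalizable at $\lambda_{1s}$, there exists $\hat{P}\in\mathbb{DC}^{n\times n}$ with $\hat{P}^{-1}\hat{A}\hat{P}=\diag(\lambda_{1s}I_{n_1}+H_1\epsilon,J_2+H_2\epsilon,\dots,J_m+H_m\epsilon)$ where $H_1\in\mathbb{C}^{n_1\times n_1}$. The hypothesis $\mathtt{m}(\hat{\lambda}_1,\hat{A})<\mathtt{m}_a(\lambda_{1s},A_s)$ for all candidate $\hat{\lambda}_1$ is equivalent, via Lemma~\ref{Lem:Jordan:mis} applied in the complex setting, to $H_1$ itself being non-diagonalizable, i.e. some eigenvalue $\mu$ of $H_1$ has a nontrivial Jordan block. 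Expanding $(\lambda_{1s}I_{n_1}+H_1\epsilon)^k=\lambda_{1s}^k I_{n_1}+k\lambda_{1s}^{k-1}H_1\epsilon$ and decomposing $\hat{\vv}^{(0)}$ along the columns of $\hat{P}$, one sees that the dual component of $\hat{\vv}^{(k)}$ accumulates a factor of $k$ times the projection onto the Jordan chain of $H_1$. After normalization this contribution does not vanish, so $\hat{\vv}^{(k)}$ either drifts away from any fixed eigenvector of $\hat{A}$ or diverges, unless the coefficients of $\hat{\vv}^{(0)}$ along the defective chain happen to vanish (the ``sufficiently good initial vector'' caveat).

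The main obstacle is case (c). The subtlety is that the standard part does converge to an eigenvector of $A_s$, so the failure is purely a dual-part phenomenon, and one must carefully exhibit a nonzero residual $\hat{A}\hat{\vv}^{(k)}-\hat{\vv}^{(k)}\hat{\gamma}^{-1}\hat{\lambda}_1\hat{\gamma}$ for every appreciable $\hat{\gamma}\in\mathbb{DC}$. Because of commutativity this residual reduces to a single linear condition on the coefficient vector in the $H_1$-chain, which is the complex counterpart of the more delicate quaternionic argument used for cases (iv)--(v) of Theorem~\ref{Thm:divergence_quater}. Once the non-vanishing of this residual is established for generic initial vectors, the proof is complete.
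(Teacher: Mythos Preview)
Your overall strategy is exactly what the paper does: the paper gives no explicit proof for Theorem~\ref{Thm:divergence_complex}, stating only that it follows ``through an analysis similar to that in Section~\ref{Sec:necessary},'' so reducing to the quaternionic argument of Theorem~\ref{Thm:divergence_quater} with commutativity simplifications is the intended approach. Your observation that cases (iii) and (v) of Lemma~\ref{Lem:5cases} collapse in the commutative setting is correct, and your three-case split (a)--(c) is the right partition of the negation of Assumption~\ref{assump3}.

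There is, however, a genuine error in your analysis of case (c). You claim that the hypothesis ``$\mathtt{m}(\hat{\lambda}_1,\hat{A})<\mathtt{m}_a(\lambda_{1s},A_s)$ for every candidate $\hat{\lambda}_1$'' is equivalent to $H_1$ being \emph{non-diagonalizable}. This is false. By the complex analogue of Lemma~\ref{Lem:Jordan:mis} one has $\mathtt{m}(\lambda_{1s}+\mu\epsilon,\hat{A})=\mathtt{m}_g(\mu,H_1)$, so the existence of some $\hat{\lambda}_1$ with $\mathtt{m}(\hat{\lambda}_1,\hat{A})=n_1$ is equivalent to $H_1=\mu I_{n_1}$ for some $\mu\in\mathbb{C}$. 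Hence the correct characterization of case (c) is that $H_1$ is \emph{not a scalar multiple of the identity}. A diagonalizable $H_1$ with two or more distinct eigenvalues (e.g.\ $H_1=\diag(1,2)$) still violates Assumption~\ref{assump3}, yet has no nontrivial Jordan chain, so your argument as written does not cover it.

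Fortunately the divergence mechanism you identify survives this correction. Writing $(\lambda_{1s}I_{n_1}+H_1\epsilon)^k=\lambda_{1s}^kI_{n_1}+k\lambda_{1s}^{k-1}H_1\epsilon$, the dual part of $\hat{\vv}^{(k)}$ after normalization carries a term proportional to $k$ times the component of $U_{1s}H_1\veta_{1s}$ orthogonal to $U_{1s}\veta_{1s}$. Since $U_{1s}$ has full column rank, this vanishes precisely when $\veta_{1s}$ is an eigenvector of $H_1$; for generic $\veta_{1s}$ and any non-scalar $H_1$ (diagonalizable or not), it does not vanish, and the dual part of $\hat{\vv}^{(k)}$ diverges linearly. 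So replace ``non-diagonalizable'' by ``non-scalar,'' drop the reference to Jordan chains, and phrase the obstruction as $H_1\veta_{1s}\notin\mathbb{C}\veta_{1s}$; then your proof goes through.
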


\subsection{Dual complex adjoint matrix based power method}
As established in Theorem~\ref{Thm:adjoint}, the eigenvalues of non-Hermitian DQMs can be computed via their dual complex adjoint matrices. This computation is outlined in Algorithm~\ref{alg_PM_adj}, which is known as DCAM-PM \cite{chen2025improved}. The algorithmic framework follows the same structure as the one developed for Hermitian DQMs \cite{chen2025improved}.

Algorithm~\ref{alg_PM_adj} transforms the eigenvalue problem of a non-Hermitian DQM into the eigenvalue problem of a non-Hermitian dual complex matrix via the dual complex adjoint matrix. 

\begin{algorithm}[t]
	\caption{Dual complex adjoint matrix based power method (DCAM-PM) for computing the dominant eigenvalue of  non-Hermitian DQMs.}\label{alg_PM_adj} 
	\begin{algorithmic}[1]
		\Require a non-Hermitian DQM $\hat{A}\in\mathbb{DQ}^{n\times n}$, an initial vector $\hat{\vv}^{(0)}\in\mathbb{DQ}^n$, the maximal iteration number $k_{\max}$ and the tolerance $\delta$
        \State Compute $\hat{B}=\mathcal{J}(\hat{A})$,$\hat{\vw}^{(0)}=\mathcal{F}(\hat{\vv}^{(0)})$.
		\For{$k=1,2,\dots,k_{\max}$}
		\State $\hat{\vy}^{(k)}=\hat{B}\hat{\vw}^{(k-1)}$,
		\State $\hat{\lambda}^{(k-1)}=(\hat{\vu}^{(k-1)})^*\hat{\vy}^{(k)}$.
		\If{$\Vert \hat{\vw}^{(k)}-\hat{\vv}^{(k-1)}\hat{\lambda}^{(k-1)}\Vert_{2^R}\leq \delta$}
        \State Break.
		\EndIf
		\State $\hat{\vw}^{(k)}=\frac{\hat{\vy}^{(k)}}{\Vert \hat{\vy}^{(k)}\Vert_2}$.
		\EndFor
        \State
        Compute $\hat{\vv}^{(k-1)}=\mathcal{F}^{-1}(\vw^{(k-1)})$,
		\Ensure $\hat{\vv}^{(k-1)}$, $\hat{\lambda}^{(k)}$.
	\end{algorithmic}
\end{algorithm}

Suppose $\hat{A}\in\mathbb{DQ}^{n\times n}$, for $\mathcal{J}(\hat{A})$ to satisfy Assumption~\ref{assump3}, it is necessary that $\hat{A}$ satisfies Assumption~\ref{assump2}(i). By Theorem~\ref{Thm:adjoint} and Theorem~\ref{Thm:con_bd_adj}, we can obtain the convergence results of Algorithm~\ref{alg_PM_adj}:
\begin{theorem}
    Suppose $\hat{A}\in\mathbb{DQ}^{n\times n}$ satisfies Assumption~\ref{assump2}(i), and let $\hat{P}$ defined in Corollary~\ref{Cor:Jordan_2} be partitioned as  $\hat{P}=(\hat{U}_1\ \hat{U}_2\ \cdots\ \hat{U}_m)$, where the columns of $\hat{U}_1\in \mathbb{DQ}^{n\times n_1}$ are eigenvectors of $\hat{A}$ corresponding to the strictly dominant eigenvalue $\hat{\lambda}_1$, and $\hat{U}_i\in\mathbb{DQ}^{n\times n_i}$ for any $i = 2, \dots, m$.   
	Given an initial dual quaternion vector  $\hat{\vv}^{(0)}\in\mathbb{DQ}^{n\times 1}$,  consider its unique decomposition: $\hat{\vv}^{(0)}=\sum_{j=1}^{m}\hat{U}_j\hat{\veta}_j$, where $\hat{\veta}_j\in\mathbb{DQ}^{n_j\times 1}$ for all $j=1,\dots,m$ and $\hat{\veta}_1\in\mathbb{DQ}^{n_1}$ is appreciable.
    Then, the sequence $\hat{\vv}^{(k)}$ and $\hat{\lambda}^{(k)}$ generated by Algorithm~\ref{alg_PM_adj} satisfies
	\begin{equation*} 
        \hat{\vv}^{(k)}
        =\frac{\hat{U}_1\hat{\veta}_1}{\Vert \hat{U}_1\hat{\veta}_1\Vert_2}\left(\frac{\hat{\lambda}_1}{|\hat{\lambda}_1|}\right)^k\left(1+\tilde{O}_D\left(\left|\frac{\lambda_{2s}}{\lambda_{1s}}\right|^k\right)\right),
	\end{equation*} 
	and
	\begin{equation*}
		\hat{\lambda}^{(k)}=
            \hat{\lambda}_1\left(1+\tilde{O}_D\left(\left|\frac{\lambda_{2s}}{\lambda_{1s}}\right|^k\right)\right).
	\end{equation*} 
\end{theorem}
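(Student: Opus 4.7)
The plan is to reduce this theorem directly to Theorem~\ref{Thm:con_bd_adj} via the adjoint correspondence of Theorem~\ref{Thm:adjoint}. By its construction, Algorithm~\ref{alg_PM_adj} is Algorithm~\ref{alg_PM} applied to $\hat{B} := \mathcal{J}(\hat{A}) \in \mathbb{DC}^{2n \times 2n}$ with initial vector $\hat{\vw}^{(0)} = \mathcal{F}(\hat{\vv}^{(0)})$, followed by a final transformation through $\mathcal{F}^{-1}$. Hence the main work is to verify that $\hat{B}$ satisfies Assumption~\ref{assump3}, apply Theorem~\ref{Thm:con_bd_adj} to the sequence $\{\hat{\vw}^{(k)}\}$, and then translate the result back to $\{\hat{\vv}^{(k)}\}$ through $\mathcal{F}^{-1}$.

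For the verification step, I would argue as follows. By Theorem~\ref{Thm:adjoint}, the standard eigenvalues of $B_s = \mathcal{J}(A_s)$ consist of each $\lambda_{is}$ together with its complex conjugate $\lambda_{is}^*$, so the strict dominance $|\lambda_{1s}| > |\lambda_{js}|$ for $j\ge 2$ is preserved. The crucial point is that Assumption~\ref{assump2}(i) forces $\hat{\lambda}_1 \in \mathbb{DR}$, whence $\hat{\lambda}_1^* = \hat{\lambda}_1$: the two adjoint copies collapse into a single dominant eigenvalue of $\hat{B}$ whose multiplicity equals $2\mathtt{m}(\hat{\lambda}_1,\hat{A}) = 2\mathtt{m}_a(\lambda_{1s},A_s) = \mathtt{m}_a(\lambda_{1s},B_s)$, verifying Assumption~\ref{assump3}. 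This is precisely why hypothesis (i), rather than (ii), of Assumption~\ref{assump2} is imposed here: if $\hat{\lambda}_1 \notin \mathbb{DR}$, then $\hat{\lambda}_1$ and $\hat{\lambda}_1^* \neq \hat{\lambda}_1$ would furnish two distinct eigenvalues of $\hat{B}$ of equal magnitude, violating strict dominance.

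With the hypothesis in hand, I would invoke Lemma~\ref{Lem:Jordan_3} to obtain a Jordan-like decomposition $\hat{Q}^{-1}\hat{B}\hat{Q} = \diag(\hat{\lambda}_1 I_{2n_1}, J_2' + H_2'\epsilon, \dots)$, partition $\hat{Q} = (\hat{W}_1\ \hat{W}_2\ \cdots)$ accordingly, and decompose $\hat{\vw}^{(0)} = \sum_j \hat{W}_j \hat{\boldsymbol{\xi}}_j$, with appreciability of $\hat{\boldsymbol{\xi}}_1$ transferring from that of $\hat{\veta}_1$ via the bijection $\mathcal{F}$. Theorem~\ref{Thm:con_bd_adj} then immediately yields the asymptotic formula for $\hat{\vw}^{(k)}$ in the stated form, together with $\hat{\lambda}^{(k)} = \hat{\lambda}_1(1 + \tilde{O}_D(|\lambda_{2s}/\lambda_{1s}|^k))$.

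The final step is to transport these formulas back via $\mathcal{F}^{-1}$. Since $\mathcal{F}^{-1}$ is $\mathbb{C}$-linear on both standard and dual parts, and $\hat{\lambda}_1/|\hat{\lambda}_1| \in \mathbb{DR}$ commutes with quaternion multiplication, the scalar and dual-number factors pass cleanly through $\mathcal{F}^{-1}$. The main technical obstacle I anticipate is identifying $\mathcal{F}^{-1}(\hat{W}_1 \hat{\boldsymbol{\xi}}_1)$ with $\hat{U}_1\hat{\veta}_1$ up to normalization: concretely, one must show that the $\hat{\lambda}_1$-invariant subspace of $\hat{B}$ is the image under $\mathcal{J}$ of the $\hat{\lambda}_1$-invariant subspace of $\hat{A}$ from Corollary~\ref{Cor:Jordan_2}, which follows by applying Theorem~\ref{Thm:adjoint} column-by-column to $\hat{U}_1$ and using that $\mathcal{F}$ and $\mathcal{F}^{-1}$ are mutual inverses on the relevant half of $\mathbb{DC}^{2n}$. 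Once this identification is established, applying $\mathcal{F}^{-1}$ to the formula for $\hat{\vw}^{(k)}$ produces the stated expression for $\hat{\vv}^{(k)}$, while the eigenvalue estimate transfers verbatim since $\hat{\lambda}^{(k)}$ is a dual complex scalar computed inside the algorithm and is unaffected by the final $\mathcal{F}^{-1}$ step.
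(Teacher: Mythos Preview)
Your proposal is correct and follows essentially the same route as the paper: the paper itself does not give a detailed proof but simply remarks that Assumption~\ref{assump2}(i) on $\hat{A}$ forces $\mathcal{J}(\hat{A})$ to satisfy Assumption~\ref{assump3}, and then invokes Theorem~\ref{Thm:adjoint} together with Theorem~\ref{Thm:con_bd_adj}. Your write-up actually supplies more detail than the paper does, in particular the explanation of why $\hat{\lambda}_1\in\mathbb{DR}$ is the key point (so that the two adjoint eigenvalues $\hat{\lambda}_1$ and $\hat{\lambda}_1^*$ coincide and the multiplicity doubles to match $\mathtt{m}_a(\lambda_{1s},B_s)=2n_1$), and the back-transfer through $\mathcal{F}^{-1}$.
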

\begin{theorem}\label{Thm:divergence_adjoint}
    Let $\hat{A}=A_s+A_d\epsilon\in\mathbb{DQ}^{n\times n}$.  
If Assumption~\ref{assump2}~(i) is violated for $\hat{A}$, then the dual complex adjoint matrix based power method (Algorithm~\ref{alg_PM_adj}) either diverges or fails to yield the desired eigenpair. 
  
\end{theorem}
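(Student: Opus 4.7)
The plan is to reduce Theorem~\ref{Thm:divergence_adjoint} to the previously established divergence result for non-Hermitian dual complex matrices, Theorem~\ref{Thm:divergence_complex}, via the adjoint correspondence of Theorem~\ref{Thm:adjoint}. First, I would observe that Algorithm~\ref{alg_PM_adj} is literally Algorithm~\ref{alg_PM} applied to the dual complex matrix $\hat{B}=\mathcal{J}(\hat{A})\in\mathbb{DC}^{2n\times 2n}$ with initial vector $\hat{\vw}^{(0)}=\mathcal{F}(\hat{\vv}^{(0)})$, followed only by the postprocessing $\hat{\vv}^{(k-1)}=\mathcal{F}^{-1}(\hat{\vw}^{(k-1)})$. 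Since $\mathcal{F}^{-1}$ is a continuous bijection, convergence or divergence of $\hat{\vv}^{(k-1)}$ toward an eigenvector of $\hat{A}$ is equivalent to the analogous behavior of $\hat{\vw}^{(k-1)}$ toward an eigenvector of $\hat{B}$. Therefore, it suffices to prove that whenever Assumption~\ref{assump2}(i) is violated for $\hat{A}$, Assumption~\ref{assump3} must be violated for $\hat{B}$; Theorem~\ref{Thm:divergence_complex} then delivers the desired divergence.

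The second step would be a case analysis enumerating every way Assumption~\ref{assump2}(i) can fail. Case (a): if $|\lambda_{1s}|$ is not strictly greater than the moduli of the remaining standard eigenvalues of $A_s$, then the spectrum of $\mathcal{J}(A_s)$ inherits this deficiency and strict dominance fails for $\mathcal{J}(\hat{A})$. Case (b): if $|\lambda_{1s}|$ is strictly dominant in $A_s$ but $\lambda_{1s}\in\mathbb{C}\setminus\mathbb{R}$, the adjoint construction produces $\bar{\lambda}_{1s}\neq\lambda_{1s}$ as another eigenvalue of $\mathcal{J}(A_s)$ with the same modulus, again breaking strict dominance. Case (c): $|\lambda_{1s}|$ is strictly dominant and $\lambda_{1s}\in\mathbb{R}$, yet no eigenvalue $\hat{\lambda}_1\in\mathbb{DR}$ of $\hat{A}$ satisfies $\mathtt{m}(\hat{\lambda}_1,\hat{A})=\mathtt{m}_a(\lambda_{1s},A_s)$.

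The hard part will be case (c), which requires a careful multiplicity comparison between $\hat{A}$ and $\mathcal{J}(\hat{A})$. The key ingredient is Theorem~\ref{Thm:adjoint}: each quaternionic eigenpair $(\hat{\lambda},\hat{\vv})$ of $\hat{A}$ gives rise to a pair of orthogonal complex eigenvectors of $\mathcal{J}(\hat{A})$ associated with $\hat{\lambda}$ and $\hat{\lambda}^*$, respectively. Specializing to $\lambda_{1s}\in\mathbb{R}$, we have $\lambda_{1s}=\bar{\lambda}_{1s}$, hence $\mathtt{m}_a(\lambda_{1s},\mathcal{J}(A_s))=2\,\mathtt{m}_a(\lambda_{1s},A_s)$; and for any $\hat{\lambda}_1\in\mathbb{DR}$ the self-conjugacy $\hat{\lambda}_1^*=\hat{\lambda}_1$ yields $\mathtt{m}(\hat{\lambda}_1,\mathcal{J}(\hat{A}))=2\,\mathtt{m}(\hat{\lambda}_1,\hat{A})$, whereas any $\hat{\lambda}_1\in\mathbb{DC}\setminus\mathbb{DR}$ only yields $\mathtt{m}(\hat{\lambda}_1,\mathcal{J}(\hat{A}))=\mathtt{m}(\hat{\lambda}_1,\hat{A})\leq\mathtt{m}_a(\lambda_{1s},A_s)<\mathtt{m}_a(\lambda_{1s},\mathcal{J}(A_s))$. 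Thus Assumption~\ref{assump3} at a candidate with standard part $\lambda_{1s}$ reduces exactly to the existence of a dual-real $\hat{\lambda}_1$ satisfying Assumption~\ref{assump2}(i); its absence in case (c) forces Assumption~\ref{assump3} to fail, and Theorem~\ref{Thm:divergence_complex} concludes the argument.
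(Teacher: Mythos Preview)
Your approach is essentially the paper's: the paper states (just before the convergence theorem for Algorithm~\ref{alg_PM_adj}) that ``for $\mathcal{J}(\hat{A})$ to satisfy Assumption~\ref{assump3}, it is necessary that $\hat{A}$ satisfies Assumption~\ref{assump2}(i)'' and then implicitly deduces Theorem~\ref{Thm:divergence_adjoint} from Theorem~\ref{Thm:divergence_complex} via the adjoint correspondence, without spelling out the case analysis. Your cases (a)--(c) are exactly the natural way to justify that unproved necessity claim, so the strategy coincides.
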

As established in these two theorems, the convergence rate of Algorithm~\ref{alg_PM_adj} is comparable to that of Algorithm~\ref{alg_PM}, while its convergence conditions are more stringent than those of Algorithm~\ref{alg_PM}. 
Consequently, although PM and DCAM-Pm exhibit similar convergence behavior for Hermitian DQMs, DCAM-PM is subject to stricter convergence requirements than PM when applied to non-Hermitian DQMs.

\section{Numerical Experiments}
In this section, we present the numerical experiments for computing the eigenvalue of the non-Hermitian DQMs by the power method (Algorithm~\ref{alg_PM}) and the dual complex adjoint matrix based power method (Algorithm~\ref{alg_PM_adj}). The power method is referred to as PM, and the dual complex adjoint matrix based power method is abbreviated as DCAM-PM, respectively. The computations are performed on an Intel Core i9-14900HX @ 2.20GHz/32GB computer, using MATLAB as the programming environment.
Our code is available at \url{https://github.com/BUAA-HaoYang/DQ-toolbox}.

Unless otherwise specified, we set the maximum number of iterations to $k_{\max}=1000$ and the tolerance to $\delta=10^{-10}$. 
We report the iterative performance measured by the $2^R$-norm of the residual value, defined as
\begin{equation}
\textup{Res}=\Vert \hat{A}\hat{\vv}^{(k)}-\hat{\vv}^{(k)}\hat{\lambda}^{(k)}\Vert_{2^R}. 
\end{equation}

\subsection{Laplacian matrices for formation control} 
 In the multi-agent formation control, the Laplacian matrix of the unit dual quaternion directed graph (UDQDG) \cite{qi2025unit} plays a key role, especially for leader-follower structures \cite{Liu2025distributed}.
We begin by examining the numerical performance of PM and DCAM-PM applied to such Laplacian matrices.
Suppose we have a UDQDG  $\phi=(G,\mathbb{\hat{U}},\varphi)$, where $\mathbb{\hat{U}}$ is the set of unit dual quaternion, $G=(V,E)$ is a directed graph, $\varphi(i,j)=\hat{q}_{d_{ij}}\in\mathbb{\hat{U}}$ is the weight of the arc $(i,j)$ for $(i,j)\in E$. 
The Laplacian matrix of $\phi$ is defined by
\begin{equation*}
    \hat{L}=D-\hat{A},
\end{equation*}
where $D$ is a real diagonal matrix whose $i$th diagonal element is equal to the out-degree $d_i$ (the number of arcs going out from $i$) of the $i$th vertex, and $\hat{A}=(\hat{a}_{ij})$ with
\begin{equation*}
    \hat{a}_{ij}=\begin{cases}
        \hat{q}_{d_{ij}}, &\text{if } (i,j)\in E,\\
        0,&\text{otherwise}.
    \end{cases}
\end{equation*}

We first consider directed cycles demonstrated in \textup{Fig}~\ref{fig1:main}. The  corresponding Laplacian matrices for 3 and 4 vertices are 
    \begin{equation}\label{Lap:cycle} 
        \hat{L}_3^{c}=\begin{pmatrix}
            1&-\hat{q}_{d_{12}}&0\\
            0&1&-\hat{q}_{d_{23}}\\
            -\hat{q}_{d_{31}}&0&1
        \end{pmatrix},  \ \ 
        \hat{L}_4^{c}=\begin{pmatrix}
            1&-\hat{q}_{d_{12}}&0&0\\
            0&1&-\hat{q}_{d_{23}}&0\\
            0&0&1&-\hat{q_{d_{34}}}\\
            -\hat{q}_{d_{41}}&0&0&1\\
        \end{pmatrix},
    \end{equation}
    respectively. 
    Both $\hat{L}_3^{c}$ and $\hat{L}_4^{c}$ are non-Hermitian.

    \begin{figure}[htbp]
    \centering
    \begin{subfigure}{0.3\textwidth}
        \centering
        \includegraphics[width=\textwidth]{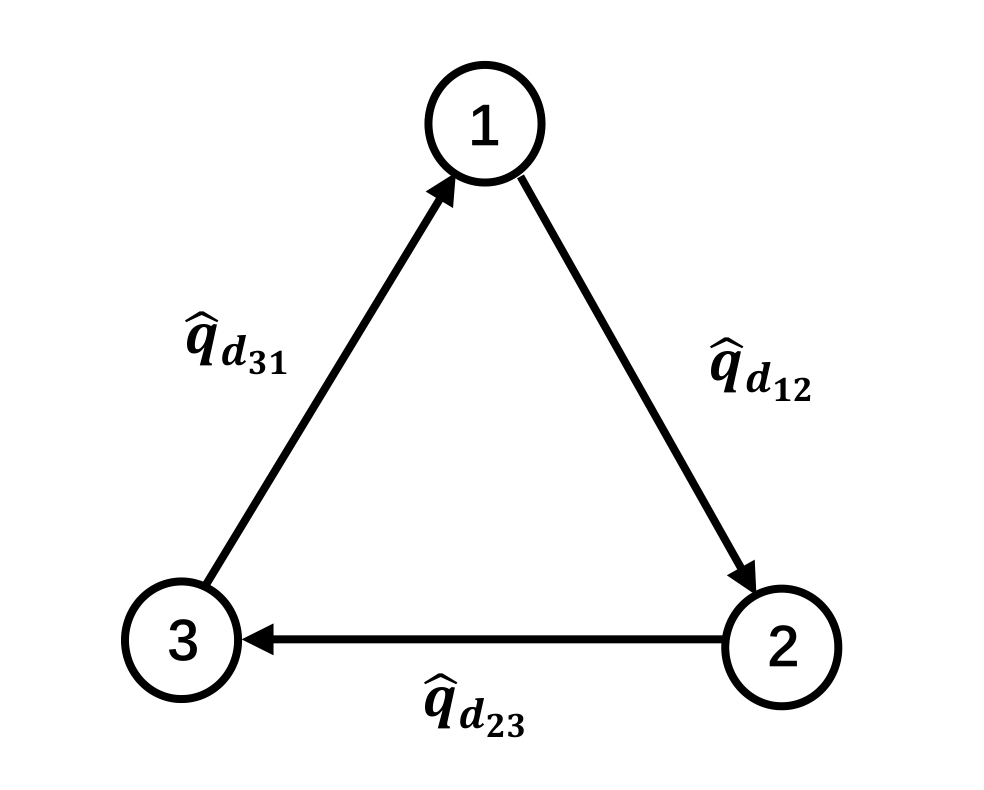}
        \caption{$|V|=3$}
        \label{fig1:sub1}
    \end{subfigure}
    \begin{subfigure}{0.3\textwidth}
        \centering
        \includegraphics[width=\textwidth]{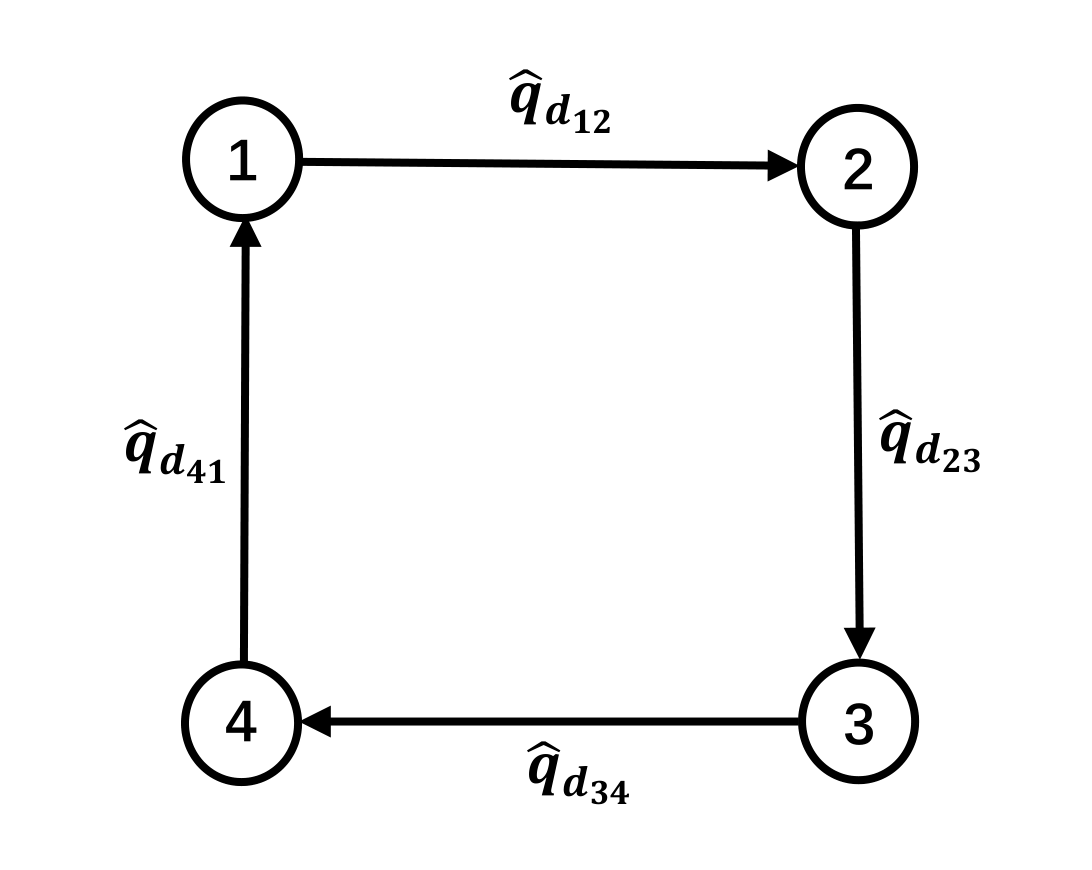}
        \caption{$|V|=4$}
        \label{fig1:sub2}
    \end{subfigure}
    \caption{Illustrations of two directed cycles.}    
    \label{fig1:main}
\end{figure}

In contrast to the Hermitian cases presented in \cite{cui2024power}, PM may not converge for the Laplacian matrices even if the cycle is balanced. 

\begin{example}\label{ex:L3c}
   Consider the Laplacian matrix $\hat{L}_3^{c}$ defined by \eqref{Lap:cycle}. Suppose  $\hat{q}_{d_{12}},\hat{q}_{d_{23}}$ are random unit dual quaternions, and set  $\hat{q}_{d_{31}}=\hat{q}_{d_{23}}^*\hat{q}_{d_{12}}^*$, with the following values:
   \begin{equation*}
   \begin{aligned}
   \hat{q}_{d_{12}} &= -0.68 + 0.60\ii + 0.41\jj - 0.12\kk + (-0.22 - 0.47\ii + 0.32\jj + 0.02\kk)\epsilon,\\
   \hat{q}_{d_{23}} &= 0.89 + 0.32\ii - 0.03\jj + 0.32\kk + (-0.25 + 0.39\ii - 0.30\jj + 0.29\kk)\epsilon,\\
   \hat{q}_{d_{31}} &= -0.75 - 0.44\ii - 0.16\jj + 0.47\kk + (0.05 + 0.72\ii - 0.32\jj + 0.65\kk)\epsilon.
   \end{aligned}
   \end{equation*}
   The initial vector is chosen as:
  \begin{equation*}
  \hat{\vv}^{(0)} = \begin{pmatrix}
  0.58 + 0.38\ii - 2.28\jj - 0.70\kk\\
  -0.27 + 1.71\ii + 1.29\jj + 0.06\kk\\
  -0.50 - 0.23\ii + 1.26\jj - 0.61\kk
  \end{pmatrix} +
  \begin{pmatrix}
  -0.63 + 0.06\ii - 0.09\jj - 0.42\kk\\
  -0.53 + 1.26\ii + 1.25\jj + 0.76\kk\\
  0.82 - 3.16\ii + 1.30\jj - 0.00\kk
  \end{pmatrix}\epsilon.
  \end{equation*}
   We compute the strict dominant eigenvalue of $\hat{L}^c_3$ using PM and DCAM-PM. 
   The results  presented in \textup{Fig.~\ref{fig2:main}~(a)} show that the residuals of both methods fail to converge to zero.. 
   \end{example}

   In fact, the eigenvalues of $\hat{L}_3^c$ in Example~\ref{ex:L3c} has closed form expressions.  According to \cite{qi2025unit}, the directed cycle graph is balanced. Consequently, the eigenvalues of the operator $\hat{L}_3$ coincide with those of the Laplacian matrix associated with its underlying undirected graph  $L_3=\begin{pmatrix}
        1&-1&0\\
        0&1&-1\\
        -1&0&1
    \end{pmatrix}
    $, whose eigenvalues of $L_3$ are $\frac{3}{2}+\frac{\sqrt{3}}{3}\ii,\frac{3}{2}-\frac{\sqrt{3}}{3}\ii$ and $0$.  Therefore, the residuals of PM and DCAM-PM do not converge to zero. 

    \begin{figure}[htbp]
    \centering
    \begin{subfigure}{0.48\textwidth}
        \centering
        \includegraphics[width=\textwidth]{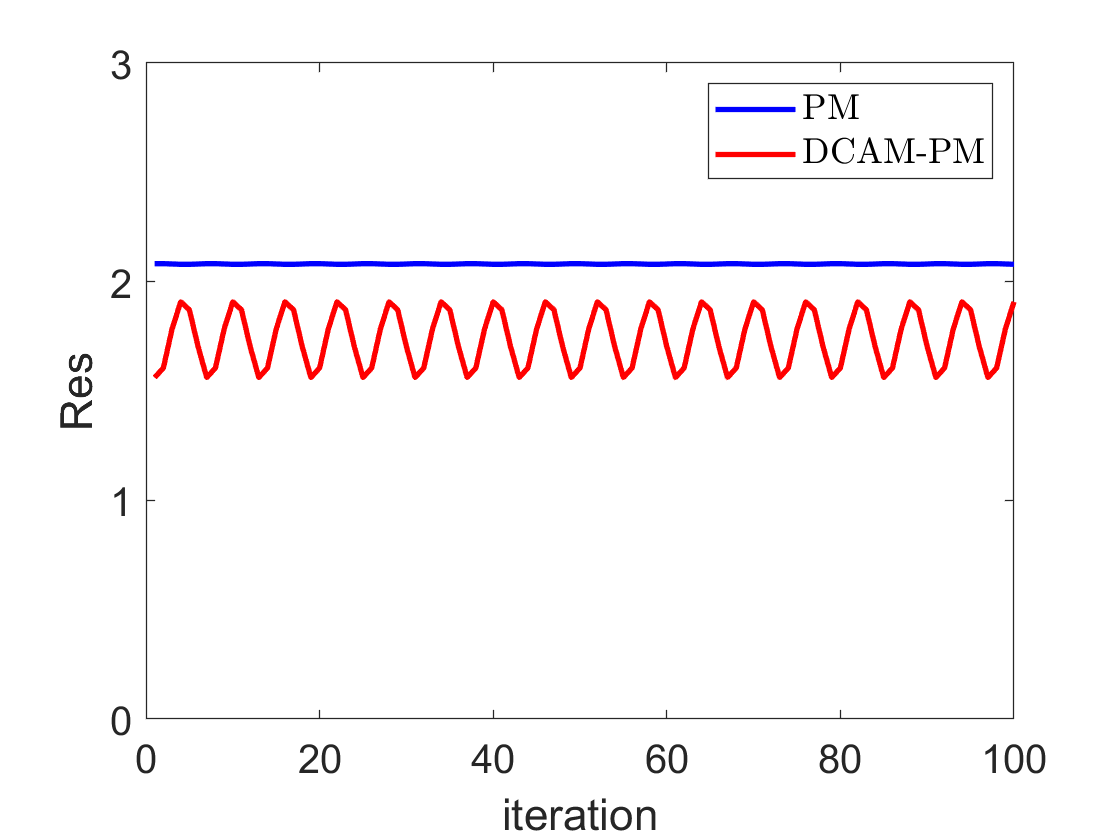}
        \caption{$|V|=3$}
        \label{fig2:sub1}
    \end{subfigure}
    \begin{subfigure}{0.48\textwidth}
        \centering
        \includegraphics[width=\textwidth]{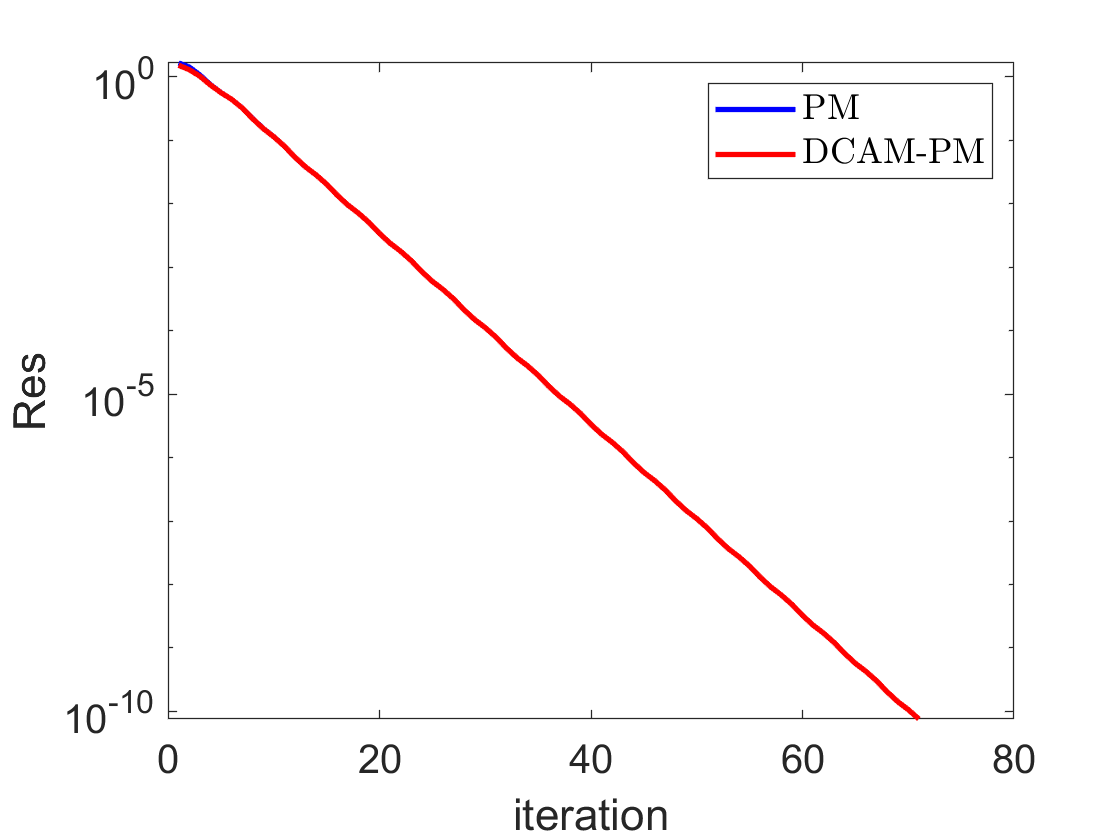}
        \caption{$|V|=4$}
        \label{fig2:sub2}
    \end{subfigure}
    \caption{Numerical results of PM and DCAM-PM for Laplacian matrices of  balanced directed cycles defined by \eqref{Lap:cycle}.}  
    \label{fig2:main}
    \end{figure}

\begin{example}
    Consider the Laplacian matrix $\hat{L}_4^{c}$ defined by \eqref{Lap:cycle}. Suppose   $\hat{q}_{d_{12}},\hat{q}_{d_{23}},\hat{q}_{d_{34}}$ are random unit dual quaternions, and set  $\hat{q}_{d_{41}}=\hat{q}_{d_{34}}^*\hat{q}_{d_{23}}^*\hat{q}_{d_{12}}^*$, with the following values:
   \begin{equation*}
\begin{aligned}
\hat{q}_{d_{12}} &= -0.34 - 0.15\ii + 0.81\jj - 0.46\kk + (-0.17 + 0.42\ii + 0.03\jj + 0.05\kk)\epsilon, \\
\hat{q}_{d_{23}} &= 0.06 + 0.82\ii - 0.50\jj - 0.27\kk + (-0.57 - 0.18\ii - 0.54\jj + 0.32\kk)\epsilon, \\
\hat{q}_{d_{34}} &= 0.14 - 0.40\ii + 0.56\jj + 0.71\kk + (-0.40 - 0.61\ii + 1.16\jj - 1.17\kk)\epsilon, \\
\hat{q}_{d_{41}} &= 0.25 + 0.11\ii - 0.91\jj + 0.30\kk + (-1.08 - 0.72\ii - 0.12\jj + 0.82\kk)\epsilon.
\end{aligned}
\end{equation*}
   The initial vector is chosen as:
   \begin{equation*}
\hat{\vv}^{(0)}=\begin{pmatrix}
2.21+1.13\ii-0.03\jj-0.02\kk\ \\
-1.34+1.40\ii-0.97\jj-1.52\kk\ \\
-1.49+0.75\ii+0.03\jj-0.07\kk\ \\
-0.03+0.06\ii+0.20\jj+0.82\kk \\
\end{pmatrix}+
\begin{pmatrix}
-0.64-0.71\ii-1.17\jj+0.54\kk\ \\
-1.41+0.39\ii-0.88\jj+1.90\kk\ \\
-1.28+0.38\ii+2.72\jj-0.18\kk\ \\
-0.42+1.52\ii-0.99\jj-0.16\kk \\
\end{pmatrix}\epsilon.
\end{equation*}
   We compute the strict dominant eigenvalue of $\hat{L}^c_4$ using the PM and DCAM-PM. The result is presented in \textup{Fig.~\ref{fig2:main}~(b)}. 
\end{example}
Since the directed cycle is balanced, $\hat{L}_4^{c}$ is similar to the Laplacian matrix of the underlying graph, which is $\begin{pmatrix}
        1&-1&0&0\\
        0&1&-1&0\\
        0&0&1&-1\\
        -1&0&0&1
    \end{pmatrix}
    $. Its standard eigenvalues are $2,1+\ii,1+\ii,0$. Thus $\hat{L}_4^c$ satisfies Assumption \ref{assump2}~(i). Both theoretical analysis and Fig.~\ref{fig2:main}~(b) demonstrate the convergence of PM and DCAM-PM with a rate of $\tilde{O}_D\left(\left(\frac{\sqrt{2}}{2}\right)^k\right)$. 

In general, for a unit dual quaternion balanced directed cycle with $n$ vertex, its Laplacian matrix $\hat{L}_n^{c}\in\mathbb{DQ}^{n\times n}$ is similar to the Laplacian matrix of its underlying graph:
\begin{equation}\label{eq:Lap}
    L_n^{c}=\begin{pmatrix}
        1&-1&0&\cdots&0&0\\
        0&1&-1&\cdots&0&0\\
        0&0&1&\cdots&0&0\\
        \vdots&\vdots&\vdots&\ddots&\vdots&\vdots\\
        0&0&0&\cdots&1&-1\\
        -1&0&0&\cdots&0&1
    \end{pmatrix}.
\end{equation}
When $n$ is odd, its eigenvalues are given by $1+e^{\frac{(2k-1)\pi\ii}{n}}$ for $k=0,1,\dots,n-1$. Consequently, $\hat{L}_n^{c}$ does not satisfy Assumption~\ref{assump2}. Conversely, when $n$ is even, the eigenvalues are $1+e^{\frac{2k\pi\ii}{n}}$ for $k=0,1,\dots,n-1$, and in this case,   $\hat{L}_n^{c}$ satisfies Assumption~\ref{assump2}~(i). Therefore, PM and DCAM-PM fail to compute the eigenvalues of  $\hat{L}_n^{c}$ When $n$ is odd, but succeed when $n$ is even.

A directed wheel graph comprises a central vertex, a directed cycle of surrounding vertices, and directed edges connecting the center to each surrounding vertex. We consider the Laplacian matrices of such directed wheel graphs. \textup{Fig.}~\ref{fig3:main} shows the directed cycles with 4 vertices and 5 vertices. 
\begin{figure}[htbp]
    \centering
    \begin{subfigure}{0.3\textwidth}
        \centering
        \includegraphics[width=\textwidth]{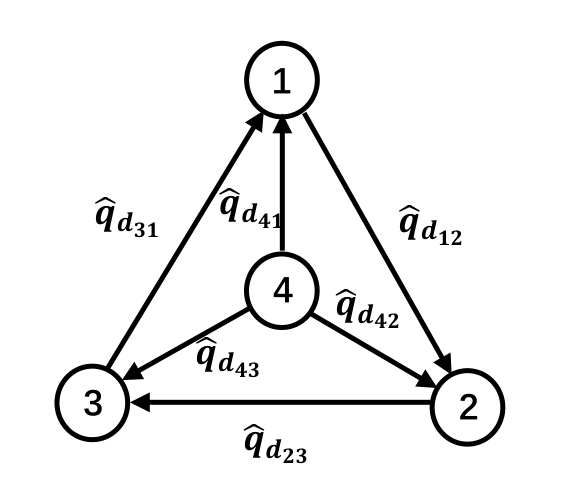}
        \caption{$|V|=4$}
        \label{fig3:sub1}
    \end{subfigure}
    \begin{subfigure}{0.31\textwidth}
        \centering
        \includegraphics[width=\textwidth]{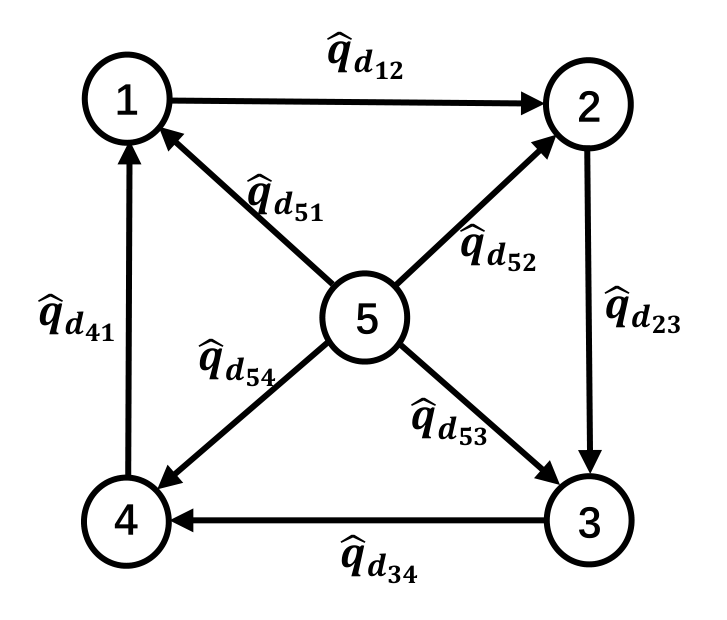}
        \caption{$|V|=5$}
        \label{fig3:sub2}
    \end{subfigure}
   \caption{Illustrations of two directed wheel graphs.}   \label{fig3:main}
\end{figure}
\par
For a unit dual quaternion balanced directed wheel graph with $n(n\geq 4)$ vertex, its Laplacian matrix $\hat{L}_n^{w}\in\mathbb{DQ}^{n\times n}$ is similar to the Laplacian matrix of its underlying graph:
\begin{equation}\label{L:wheel}
    L_n^w=\begin{pmatrix}
        L_{n-1}^c&\mathbf{0}\\
        \mathbf{s}&n-1
    \end{pmatrix},
\end{equation}
where $L_{n-1}^c$ is defined as in equation \eqref{eq:Lap}, $\mathbf{s}=(1\ 1\ \cdots\ 1)$. $\hat{L}_n^{w}$ satisfies Assumption~\ref{assump2}~(i) and $n-1$ is the strict dominant eigenvalue of $\hat{L}_n^{w}$ , regardless of whether $n$ is odd or even.
\begin{example}
    Random balanced unit dual quaternion directed wheel graphs with $4$ and $5$ vertices are generated, with Laplacian matrices $\hat{L}_4^{w}$ and $\hat{L}_5^{w}$, respectively. The eigenvalues of $\hat{L}_4^{w}$ are $3,\frac{3}{2}+\frac{\sqrt{3}}{3}\ii,\frac{3}{2}+\frac{\sqrt{3}}{3}\ii$ and $0$. The eigenvalues of $\hat{L}_4^{w}$ are $4,2,1+\ii,1+\ii$ and $0$. We apply PM and DCAM-PM to compute the strict dominant eigenvalues of $\hat{L}_4^{w}$ and $\hat{L}_5^{w}$. The results are presented in \textup{Fig.~\ref{fig4:main}}. As shown in \textup{Fig.~\ref{fig4:main}}, the residuals of both PM and DCAM-PM converge to zero at a rate of $\tilde{O}_D\left(\left(\frac{\sqrt{3}}{3}\right)^k\right)$ for $|V|=4$ and $\tilde{O}_D\left(\left(\frac{1}{2}\right)^k\right)$ for $|V|=5$.
\end{example}

    \begin{figure}[h]
    \centering
    \begin{subfigure}{0.4\textwidth}
        \centering
        \includegraphics[width=\textwidth]{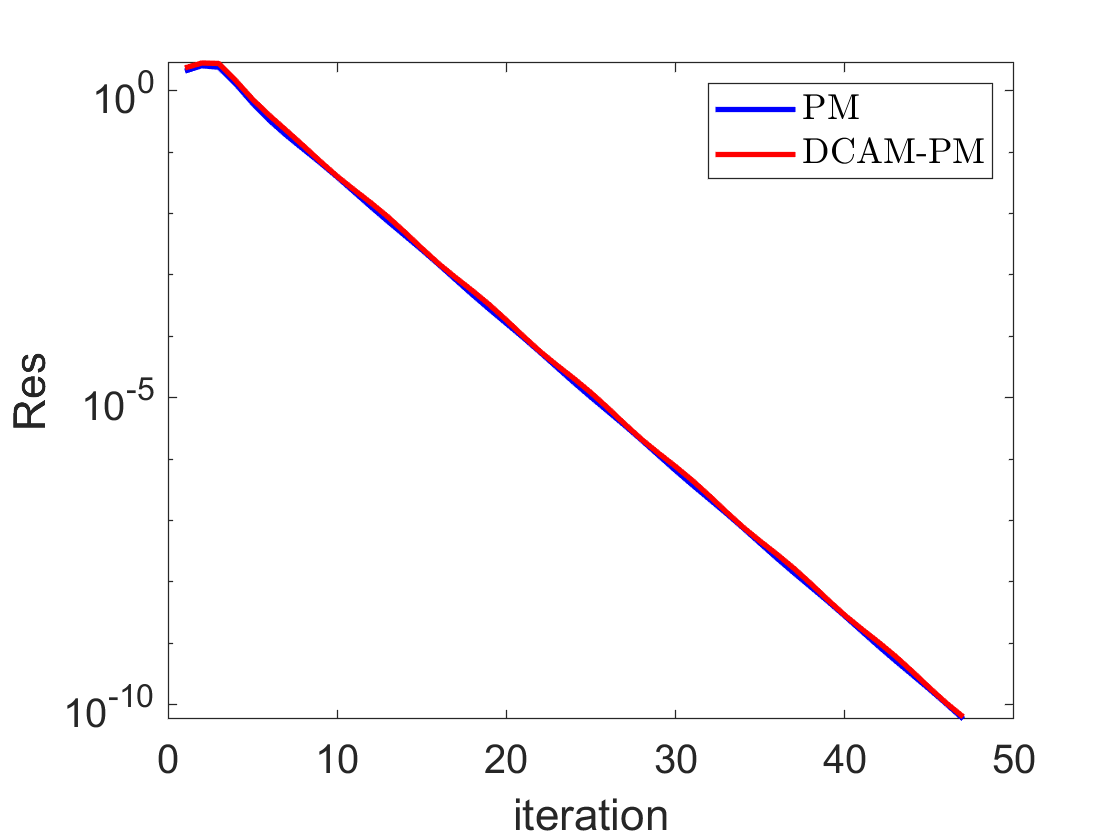}
        \caption{$|V|=4$}
        \label{fig4:sub1}
    \end{subfigure}
    \begin{subfigure}{0.4\textwidth}
        \centering
        \includegraphics[width=\textwidth]{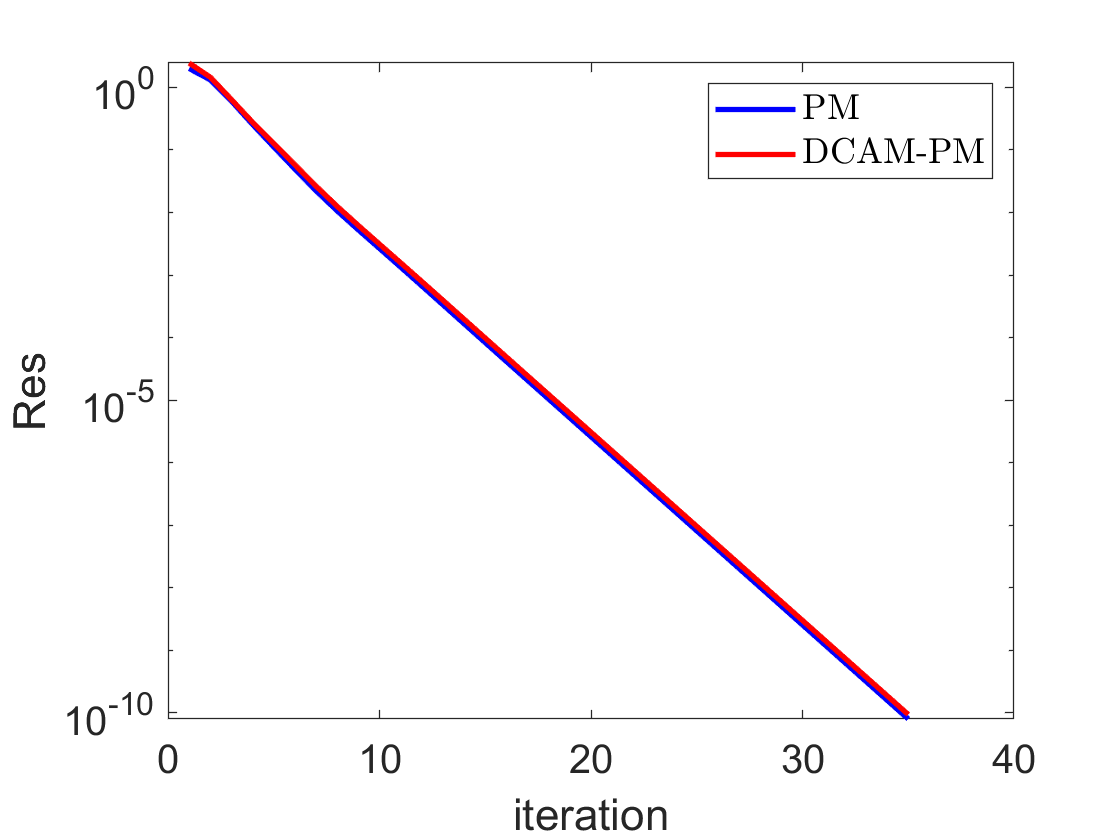}
        \caption{$|V|=5$}
        \label{fig4:sub2}
    \end{subfigure}
    \caption{Numerical results for directed wheel graphs, using the Laplacian matrix  in \eqref{L:wheel}.}    \label{fig4:main}
    \end{figure}

\subsection{Comparison of PM and DCAM-PM}
To elucidate the performance differences between PM and DCAM-PM, we constructed a series of structured matrices with tailored properties. These test cases are designed to isolate key scenarios which demonstrate that, although the algorithms exhibit similar convergence behavior for Hermitian DQMs, DCAM-PM imposes notably more stringent convergence conditions for non-Hermitian cases compared to PM. 

\begin{example}
    In this experiment, we randomly generate a diagonalizable DQM $\hat{A}\in\mathbb{DQ}^{10\times 10}$ with eigenvalues $2+\ii+\epsilon,1+\ii+\epsilon,\dots,1+\ii+\epsilon$. $\hat{A}$ satisfies Assumption~\ref{assump2} but does not satisfy Assumption~\ref{assump2}~(i). We select an initial vector randomly and compute the strict dominant eigenvalue of $\hat{A}$ via PM and DCAM-PM. The result is presented in Fig.~\ref{fig:compare}.
\end{example}
\begin{figure}[htbp]
    \centering
    \includegraphics[width=0.55\textwidth]{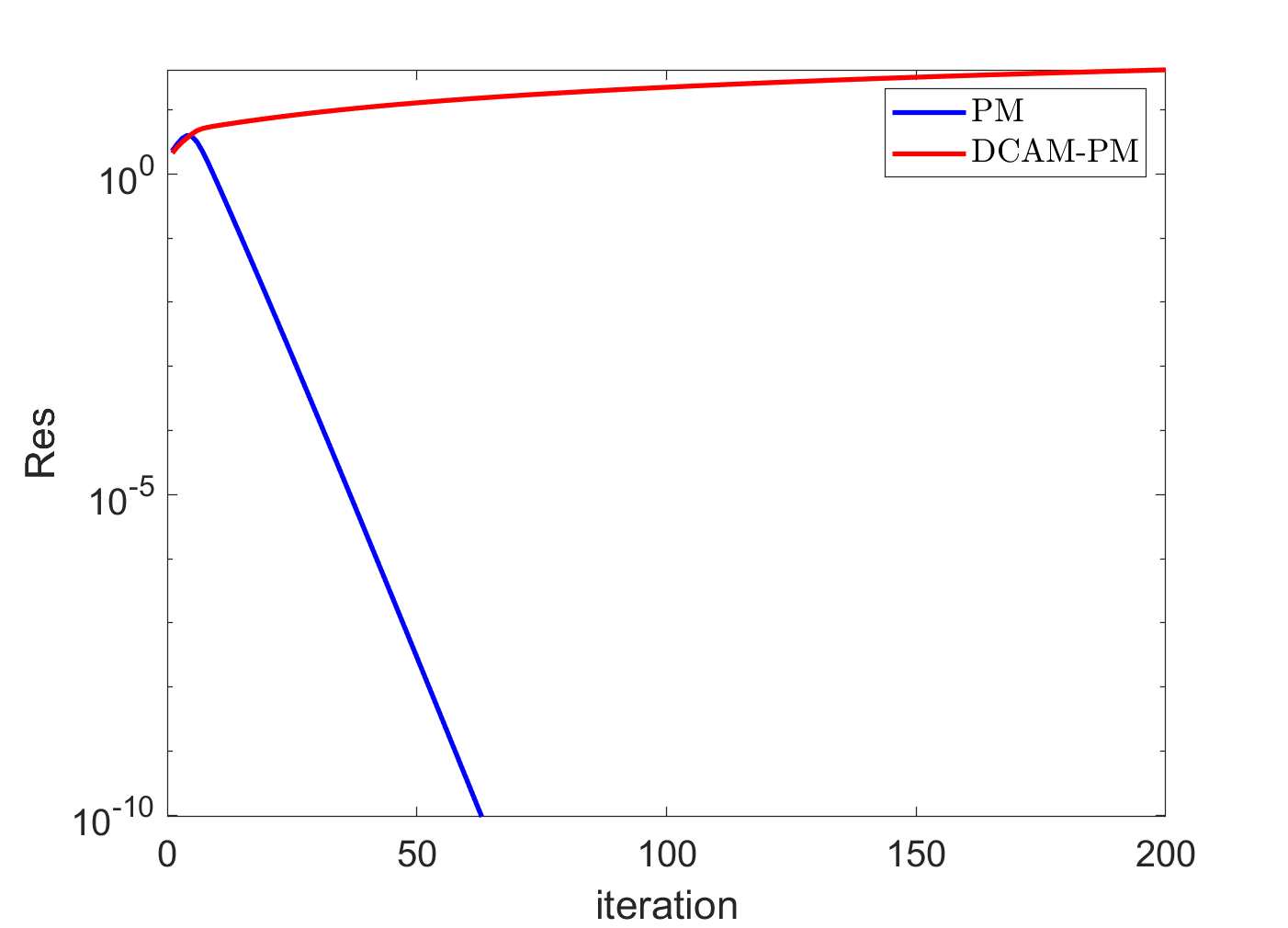}
   \caption{Numerical results for a DQM satisfying Assumption~\ref{assump2} but not Assumption~\ref{assump2}~(i).}   \label{fig:compare}
\end{figure}
From Fig.~\ref{fig:compare}, the residual of PM converges to zero with a convergence rate of $\tilde{O}_D\left(\left(\frac{1}{2}\right)^k\right)$, whereas the residual of DCAM-PM fails to converge to zero. This is consistent with Theorems~\ref{Thm:divergence_quater} and  \ref{Thm:divergence_adjoint}.

Next, we continue to investigate the convergence properties of the power method using structured matrices with non-diagonalizable standard parts. 
While the linear convergence rate is governed by the ratio $\frac{|\lambda_{2s}|}{|\lambda_{1s}|}$, the constant factor in the big-O notation is ultimately dictated by the size of the largest Jordan block, as established in equation~\eqref{eq:proof2}. 
This distinction arises directly from the non-Hermitian nature of the problem, as Hermitian DQMs are guaranteed to be diagonalizable.

\begin{example}
    Let $\hat{B}=B_s+B_d\epsilon\in\mathbb{DQ}^{10\times 10}$ be a non-Hermitian DQM, where
	\begin{equation*}
		B_s=\begin{pmatrix}
			1.1+1.1\ii&&\\
			&J_{n_{21}}(1+\ii)&\\
			&&I_{9-{n_{21}}}(1+\ii)
		\end{pmatrix} \text{  and  } 
        B_d=I_{10}.
	\end{equation*} 
     Here, $J_{n_{21}}(1+\ii)$ is a Jordan block of order ${n_{21}}$ and  $I_{9-{n_{21}}}$ is an identity matrix. We begin by constructing a matrix $\hat{A}$ that satisfies Assumption~\ref{assump2} via the similarity transformation $\hat{A}=\hat{P}^{-1}\hat{B}\hat{P}$, where $\hat{P}\in\mathbb{DQ}^{10\times 10}$ is a random invertible DQM. Then, for  ${n_{21}}=1,3,6,9$, we compute the strictly dominant eigenvalue of $\hat{A}$ using PM. The convergence history, measured by the residual $\textup{Res}$, is plotted against the iteration steps in Fig.~\ref{fig:Jordan:main}. 
    \begin{figure}[htbp]
    \centering
    \begin{subfigure}{0.24\textwidth}
        \centering
        \includegraphics[width=\textwidth]{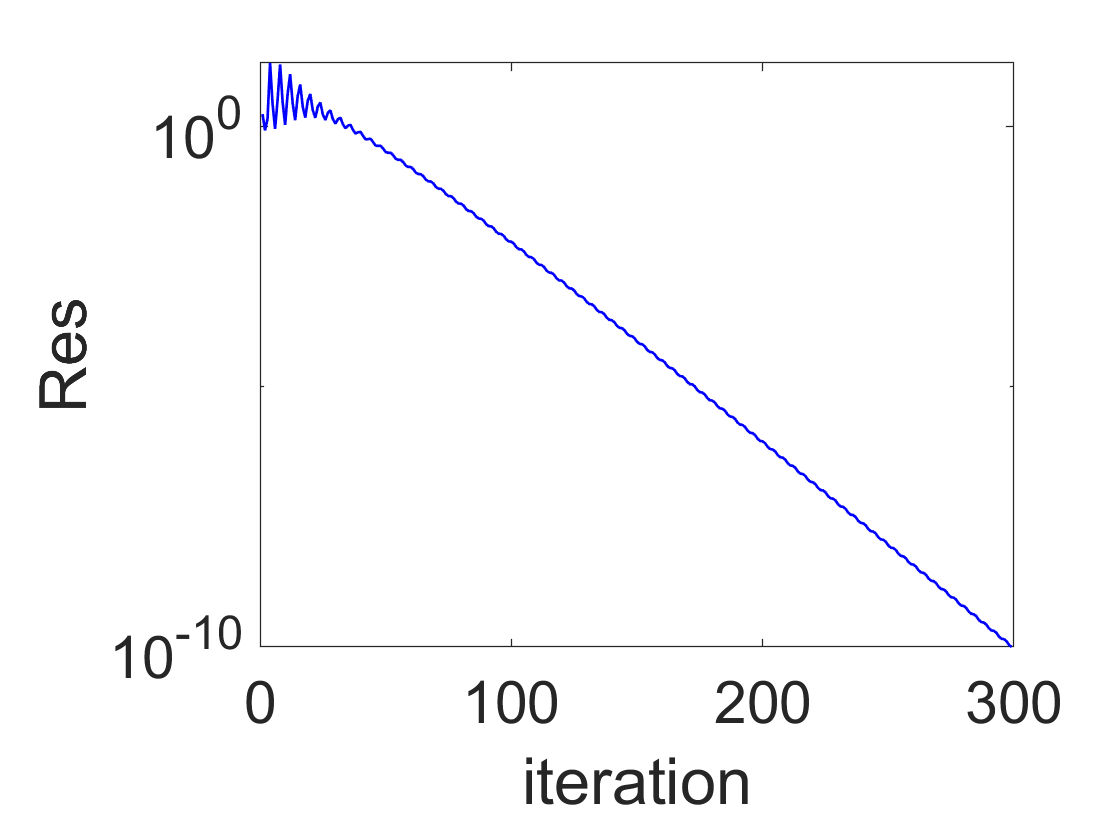}
        \caption{$n_{21}=1$}
        \label{fig:Jordan:sub1}
    \end{subfigure}
    \begin{subfigure}{0.24\textwidth}
        \centering
        \includegraphics[width=\textwidth]{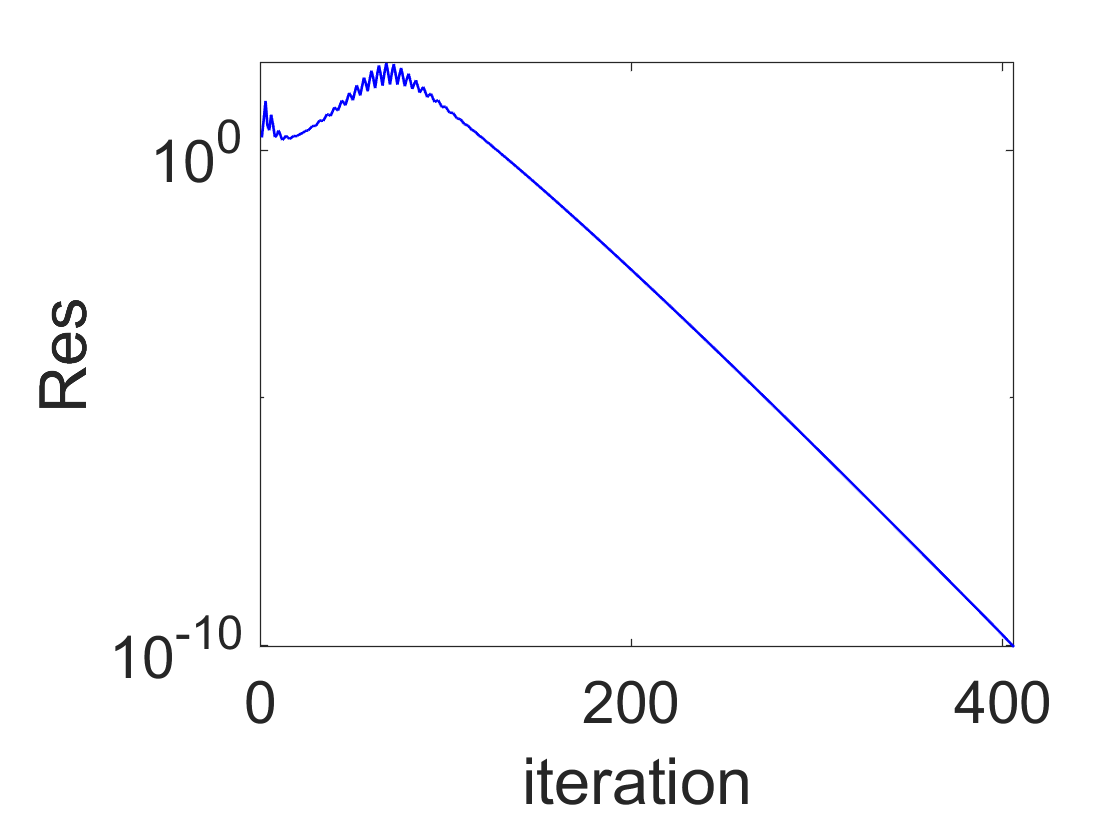}
        \caption{$n_{21}=3$}
        \label{fig:Jordan:sub2}
    \end{subfigure}
    \begin{subfigure}{0.24\textwidth}
        \centering
        \includegraphics[width=\textwidth]{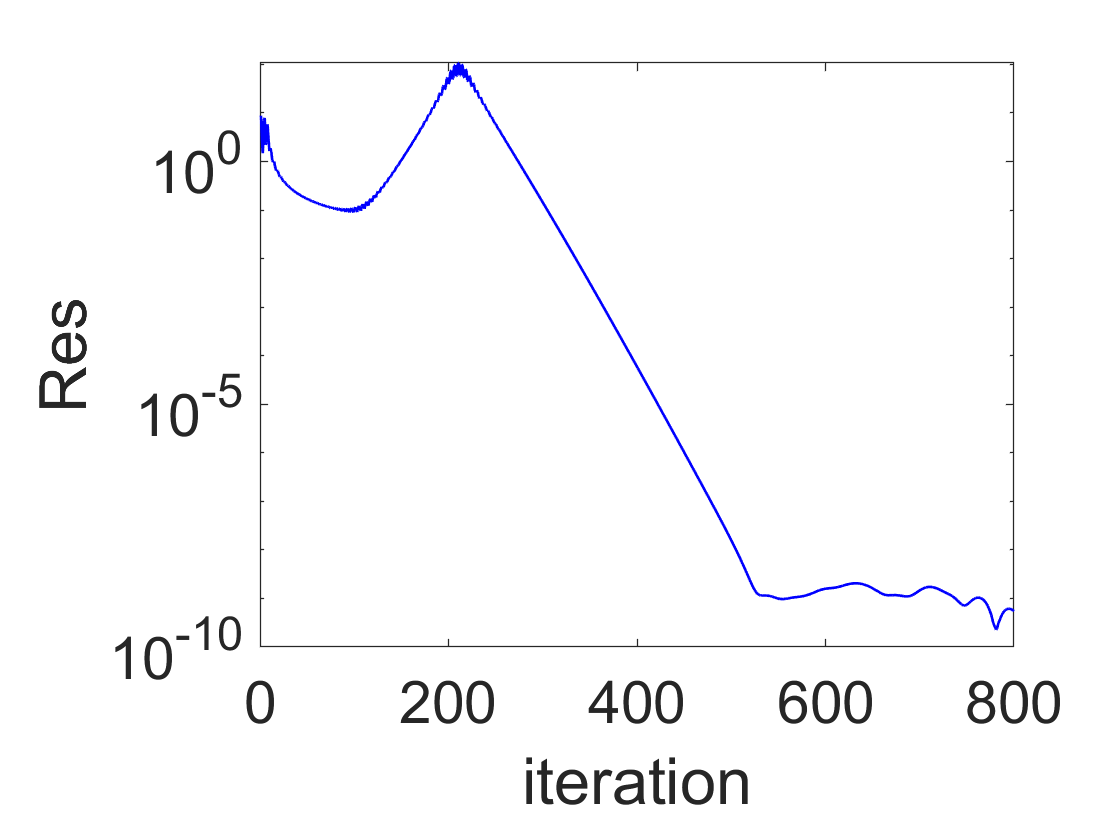}
        \caption{$n_{21}=6$}
        \label{fig:Jordan:sub3}
    \end{subfigure}
    \begin{subfigure}{0.24\textwidth}
        \centering
        \includegraphics[width=\textwidth]{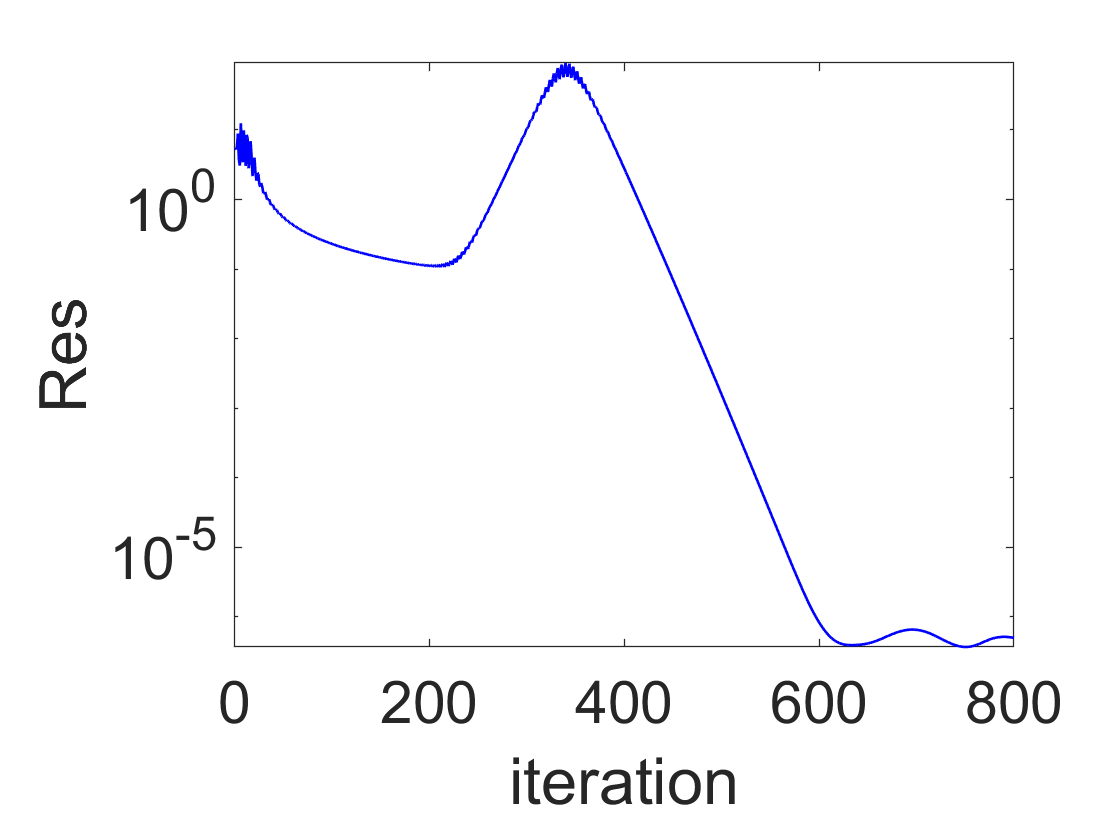}
        \caption{$n_{21}=9$}
        \label{fig:Jordan:sub4}
    \end{subfigure}
    \caption{Results for computing the strict dominant eigenvalue of the DQM whose standard part is not diagonalizable by the PM.}    \label{fig:Jordan:main}
    \end{figure}
\end{example}
From these figures, it can be seen that the PM always converges, but the fluctuations of PM are becoming increasingly large before converging as the order of the Jordan block increases. In addition, the accuracy achievable by the power method deteriorates. The higher the order of the Jordan block, the worse the precision with which it converges.

Finally, we assess the computational efficiency for test cases satisfying Assumption~\ref{assump2}~(i), where both PM and DCAM-PM exhibit linear convergence.

\begin{example}
    In this experiment, we randomly generate a dual quaternion diagonalizable matrix $\hat{A}\in\mathbb{DQ}^{n\times n}$ with eigenvalues $1.5+\epsilon,1+\epsilon,\dots,1+\epsilon$. For $n=10,50,100,200,500$, we compute the strict dominant eigenvalue of $\hat{A}$ using the PM and DCAM-PM. All experiments are repeated ten times with different choices of $\hat{A}$. We also denote ‘Iter’ as the average number of iterations and ‘Time (s)’ as the average
    CPU time in seconds for the PM and DCAM-PM. We show the performance of the PM and DCAM-PM in Table~\ref{tab:performance}. 
 Although both algorithms share the same computational complexity, they demonstrate distinct computational speeds in the numerical experiments.
\end{example}

\begin{table}[htbp]
  \centering
  \setlength{\tabcolsep}{12pt}
  \caption{Performance of the PM and DCAM-PM for DQMs satisfying Assumption~\ref{assump2}~(i).}  
  \begin{tabular}{c c c c c c c}
    \toprule
     & \multicolumn{3}{c}{PM} & \multicolumn{3}{c}{DCAM-PM} \\
    $n$ & Res & Iter & Time (s) & Res & Iter & Time (s) \\
    \midrule
    10   & 8.45e$-$11 & 62.4 & 1.63e$-$2 & 7.77e$-$11 & 66 & 3.40e$-$3 \\
    20   & 8.07e$-$11 & 62.5 & 1.64e$-$2 & 8.50e$-$11 & 67  & 3.70e$-$3 \\
    50  & 8.51e$-$11 & 62.7 & 1.77e$-$2 & 9.09e$-$11 & 68.2  & 5.30e$-$3 \\
    100  & 8.26e$-$11 & 62.8  & 1.97e$-$2 & 8.25e$-$11 & 69.7  & 1.40e$-$2\\
    200  & 8.18e$-$11 & 63  & 3.23e$-$2 & 7.79e$-$11 & 70  & 1.34e$-$1\\
    500  & 8.39e$-$11 & 62.1  & 9.07e$-$2 & 8.32e$-$11 & 71.1  & 7.43e$-$1\\
    \bottomrule
  \end{tabular}
  \label{tab:performance}  
\end{table}

\section{Conclusion}
This paper investigated the computation of the strict dominant eigenvalue for non-Hermitian DQMs via the power method (PM, Algorithm~\ref{alg_PM}) and DCAM  based power method (DCAM-PM, Algorithm~\ref{alg_PM_adj}). By introducing a novel Jordan-type decomposition, we establish the linear convergence of the power method under Assumption~\ref{assump2} and prove that this assumption is both necessary and sufficient.  While the convergence rate of DCAM-PM is comparable to PM, its convergence conditions are more stringent. 
Our experimental results confirmed the above theoretical analysis and demonstrated the efficiency of PM and DCAM-PM. 
Our code is available at \url{https://github.com/BUAA-HaoYang/DQ-toolbox}.

Future work could focus on extending the proposed approach to other iterative methods such as inverse iteration and QR-type algorithms, with the additional aim of developing eigenvalue computation methods under conditions more relaxed than those specified in Assumption~\ref{assump2}.

\bibliographystyle{siam}
\bibliography{reference}
\end{document}